\newtheorem{theorem}{Theorem}[section]
\newtheorem{lemma}[theorem]{Lemma}
\newtheorem{proposition}[theorem]{Proposition}
\newtheorem{corollary}[theorem]{Corollary}
\newtheorem{question}[theorem]{Question}
\theoremstyle{definition}
\newtheorem{definition}[theorem]{Definition}
\newtheorem{remark}[theorem]{Remark}
\numberwithin{equation}{section}
\newcounter{smallromans}
\newcounter{smallalphs}
\renewcommand{\leq}{\ensuremath{\leqslant}}
\renewcommand{\le}{\ensuremath{\leqslant}}
\renewcommand{\geq}{\ensuremath{\geqslant}}
\renewcommand{\ge}{\ensuremath{\geqslant}}
\renewcommand{\epsilon}{\ensuremath{\varepsilon}}
\def\Ave{\mathop{\text {Ave}}}
\begin{document}
\title{Weight-almost greedy bases}

\author[S. Dilworth]{S. J. Dilworth}
\address{Department of Mathematics, University of South Carolina, Columbia, SC, USA}
\email{dilworth@math.sc.edu}

\author[D. Kutzarova]{Denka Kutzarova}
\address{Department of Mathematics, University of Illinois at Urbana--Champaign, Urbana, IL, USA;  Institute of Mathematics and Informatics, Bulgarian Academy of Sciences}
\email{denka@math.uiuc.edu}

\author[V. Temlyakov]{Vladimir Temlyakov}
\address{Department of Mathematics, University of South Carolina, Columbia, SC, USA; Steklov Institute of Mathematics, Moscow, Russia; Lomonosov Moscow State University, Moscow, Russia}
\email{temlyak@math.sc.edu}

\author[B. Wallis]{Ben Wallis}
\address{Department of Mathematical Sciences, Northern Illinois University, Dekalb, IL, USA}
\email{benwallis@live.com}

\begin{abstract}
We introduce the notion of a \textit{weight-almost greedy} basis and show that a basis for a real Banach space is $w$-almost greedy if and only if it is both quasi-greedy and $w$-democratic. We also introduce the notion of \textit{weight-semi-greedy} basis and show that
a $w$-almost greedy basis is  $w$-semi-greedy and that the converse holds if the Banach space has finite cotype.
\end{abstract}
\thanks{The first author was supported by the National Science Foundation under Grant Number DMS--1361461; the third author was supported  by the Russian Federation Government Grant No. 14.W03.31.0031. The first and second authors were supported by the Workshop in Analysis and Probability at Texas A\&M University in 2017.}

\maketitle

\section{Introduction}

Let $(e_n)_{n=1}^\infty$ be a normalized basis for a Banach space $X$ with biorthogonal functionals $(e_n^*)_{n=1}^\infty$.  Konyagin and Temlyakov defined  \cite{KT} the {\bf thresholding greedy algorithm (TGA)} for $(e_n)_{n=1}^\infty$ as the sequence $(G_m)_{m=1}^\infty$ of functions $G_m:X\to X$ by letting $G_m(x)$ denote the vector obtained by taking the $m$ largest coefficients of $x=\sum e_n^*(x)e_n\in X$. The TGA provides a theoretical model for the thresholding procedure that is used  in image compression.

They defined  the basis $(e_n)$ to be
{\it greedy}
if the TGA is optimal in the sense that $G_n(x)$ is
 essentially the best $n$-term approximation to $x$ using the basis vectors, i.e.\
if  there exists a constant
$K$ such that for all $x \in X$ and $m \in \mathbb{N}$, we have
\begin{equation} \|x-G_m(x)\|\le K\inf\{\|x-\sum_{n\in
A}\alpha_ne_n\|:\
|A|=m,\
\alpha_n\in\mathbb R,\ n\in A\}.\end{equation}
They then showed
 that greedy bases can be simply characterized as
unconditional bases with the additional property of being {\it
democratic}, i.e. for some $\Delta>0$, we have $$\|\sum_{n\in A}e_n\|\le
\Delta \|\sum_{n\in B}e_n\|\quad \text{whenever $|A|\le |B|$}.$$
They also defined a basis to be \textit{quasi-greedy} if  there exists a constant $K$ 
such that $\|G_m(x)\| \le K\|x\|$
for all $x \in X$ and $m \in \mathbb{N}$.
Subsequently,
Wojtaszczyk \cite{W} proved that these are precisely the bases
for which the TGA merely converges, i.e.
$\lim_{m\to\infty}G_m(x)=x$ for $x\in X$.

The class of \textit{almost greedy} bases was introduced in \cite{DKKT}.  Let us
denote the biorthogonal sequence by $(e_n^*)$. Then
$(e_n)$ is almost greedy if there is a constant $K$ such that
\begin{equation} \|x-G_m(x)\|\le K\inf\{\|x-\sum_{n\in
A}e_n^*(x)e_n\|:\
|A|=m
\}\qquad x\in X, \ m\in\mathbb
N.\end{equation} 
It was proved in \cite{DKKT} 
 that $(e_n)$ is almost greedy if and only if $(e_n)$ is quasi-greedy and democratic.

The class of \textit{semi-greedy} bases was introduced in \cite{DKK}. A basis is semi-greedy if there is a constant $C$ such that
for all $x \in X$ and $m \in \mathbb{N}$

\begin{equation} \|x-\overline{G}_m(x)\|\le C\inf\{\|x-\sum_{n\in
A}\alpha_ne_n\|:\
|A|=m,\
\alpha_n\in\mathbb R,\ n\in A\},\end{equation} 
where $\overline{G}_m(x)$ is the best  $m$- term approximation to $x$ with the same (or possibly smaller) support as $G_m(x)$ (i.e., $\overline{G}_m(x)$ is supported on the basis vectors corresponding to the $m$ largest coefficients of $x$). It was proved in \cite{DKK} that an almost greedy basis is semi-greedy and that the converse holds if $X$ has finite cotype.
 
Later, Kerkyacharian, Picard and Temlyakov \cite{KPT} (see also \cite{Tbook}, Section 1.3) defined the notion of greedy basis with respect to a weight function, \textit{weight-greedy} basis, and proved a criterion similar to the one for greedy bases.
Their generalization was inspired by research of Cohen, DeVore and Hochmuth \cite{CDH}. Recently, Berna and Blasco \cite{BB} showed that greedy bases can be characterized as those where the error term using $m$-greedy approximant is uniformly bounded by the best $m$-term approximation with respect to polynomials with constant coefficients in the context of the weak greedy algorithm with weights.

In this paper we introduce the notion of a \textit{weight-almost greedy} basis and show that a basis for a real Banach space is $w$-almost greedy if and only if it is both quasi-greedy and $w$-democratic.

From here onward, a {\bf weight} $w=(w_i)_{i=1}^\infty$ will always be a sequence of positive real numbers.  If $A\subset\mathbb{N}$ then we let $w(A)=\sum_{i\in A}w_i$ denote the {\bf $\boldsymbol{w}$-measure} of $A$.

Let us give a more precise definition of each $G_m(x)$.  If $(a_i)_{i=1}^\infty\in c_0$ then we denote by $(a_i^*)_{i=1}^\infty$ the nonincreasing rearrangement of $(|a_i|)_{i=1}^\infty$.  Let $\tau$ be any permutation of $\mathbb{N}$ satisfying $|a_{\tau(i)}|=a_i^*$ for all $i\in\mathbb{N}$.  Then $G_{m,\tau}(x)=\sum_{i=1}^mx^*_{\tau(i)}(x)x_{\tau(i)}$. 
We next set 
$$\Lambda_{m,\tau,x}:=\{\tau(1),\cdots,\tau(m)\},$$ the set of indices of the coefficients associated with $G_{m,\tau}(x)$, called the {\bf support} of $G_{m,\tau}(x)$.  When $\tau$ is understood (or irrelevant) we write $G_m=G_{m,\tau}$  and $\Lambda_m=\Lambda_{m,\tau,x}$,  and when $m$ and $x$ are also understood we write $\Lambda=\Lambda_{m,\tau,x}$.

As is usual, we write
$$\mathbb{N}^m=\{A\subset\mathbb{N}: |A|=m\}\;\;\;\text{ and }\;\;\;\mathbb{N}^{<\infty}=\bigcup_{m=0}^\infty\mathbb{N}^m.$$
Throughout, $C$ and $D$ will denote constants in $[1,\infty)$.

\begin{definition}We say that a normalized basis $(e_n)_{n=1}^\infty$ for a Banach space $X$ is {\bf $\boldsymbol{w}$-almost greedy with constant $\boldsymbol{K}$} whenever
$$\|x-G_m(x)\|\leq K\inf\left\{\|x-\sum_{n\in A}e_n^*(x)e_n\|:A\in\mathbb{N}^{<\infty},\;w(A)\leq w(\Lambda)\right\}$$
for all $m\in\mathbb{N}$ and $x\in X$, and independent of choice of $\tau$.
\end{definition}

\begin{remark}\label{x-bound}In the above definition we can always take $A=\emptyset$ to obtain
$$\inf\left\{\|x-\sum_{n\in A}e_n^*(x)e_n\|:A\in\mathbb{N}^{<\infty},\;w(A)\leq w(\Lambda)\right\}\leq\|x\|.$$\end{remark}

\begin{definition}A normalized basis $(e_n)_{n=1}^\infty$ is {\bf $\boldsymbol{w}$-democratic with constant $\boldsymbol{D}$} whenever $w(A)\leq w(B)$ for $A,B\in\mathbb{N}^{<\infty}$ implies
$$\|\sum_{n\in A}e_n\|\leq D\|\sum_{n\in B}e_n\|.$$
\end{definition}

\begin{remark}\label{subsequence}Observe that a subsequence $(x_{n_k})_{k=1}^\infty$ of a $D$-$w$-democratic sequence $(e_n)_{n=1}^\infty$ is $D$-$w'$-democratic, where $w'=(w_{n_k})_{k=1}^\infty$ is the corresponding subsequence of $w$.  Indeed, if $A\in\mathbb{N}^{<\infty}$ then we write $A'=\{n_a:a\in A\}$ so that $w'(A)=w(A')$.\end{remark}

When $w=(1,1,1,\cdots)$, we have $w(A)=|A|$ for each $A\in\mathbb{N}^{<\infty}$.  In this case, a $w$-almost greedy (resp., $w$-democratic) basis is called, simply, {\bf almost greedy} (resp., {\bf democratic}).

Finally, let us remark that we will often be forced to work with real Banach spaces, as most of the proofs are invalid in the complex setting.

\section{$w$-almost greedy bases are quasi-greedy and $w$-democratic}

\begin{lemma}\label{quasi-greedy}Every $K$-$w$-almost greedy basis is $(K+1)$-quasi-greedy.\end{lemma}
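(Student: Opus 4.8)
The plan is to read off the bound on $\|G_m(x)\|$ directly from the defining inequality of $w$-almost greediness, combined with the triangle inequality. The crucial observation is that $A=\emptyset$ is always an admissible competitor in the infimum, since $w(\emptyset)=0\leq w(\Lambda)$; this is exactly what Remark~\ref{x-bound} records, namely
$$\inf\left\{\|x-\sum_{n\in A}e_n^*(x)e_n\|:A\in\mathbb{N}^{<\infty},\;w(A)\leq w(\Lambda)\right\}\leq\|x\|.$$

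First I would invoke the hypothesis that the basis is $K$-$w$-almost greedy to get, for every $x\in X$ and $m\in\mathbb{N}$,
$$\|x-G_m(x)\|\leq K\inf\left\{\|x-\sum_{n\in A}e_n^*(x)e_n\|:A\in\mathbb{N}^{<\infty},\;w(A)\leq w(\Lambda)\right\}\leq K\|x\|,$$
where the last inequality uses the bound on the infimum from the previous step. Then the triangle inequality finishes the argument: writing $G_m(x)=x-(x-G_m(x))$ yields
$$\|G_m(x)\|\leq\|x\|+\|x-G_m(x)\|\leq\|x\|+K\|x\|=(K+1)\|x\|,$$
which is precisely the statement that $(e_n)$ is $(K+1)$-quasi-greedy.

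I expect no real obstacle here: the lemma is an immediate consequence of the definitions. The only point that deserves explicit mention is the verification that $A=\emptyset$ satisfies the constraint $w(A)\leq w(\Lambda)$, which holds trivially because $w$-measures are sums of positive weights and hence nonnegative. Everything beyond that is a single application of the triangle inequality.
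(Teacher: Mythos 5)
Your proof is correct and follows exactly the same route as the paper's: bound the infimum by $\|x\|$ via the $A=\emptyset$ competitor (Remark~\ref{x-bound}), apply $K$-$w$-almost greediness to get $\|x-G_m(x)\|\leq K\|x\|$, and finish with the triangle inequality. No issues.
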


\begin{proof}Fix $m$ and $x$.  By Remark \ref{x-bound} together with $K$-$w$-almost-greediness,
$$\|x-G_m(x)\|\leq K\|x\|$$
and hence
$$\|G_m(x)\|\leq\|x-G_m(x)\|+\|x\|\leq(K+1)\|x\|.$$
\end{proof}

Let us give two facts from previous literature, which will be required momentarily.

\begin{proposition}[{\cite[Lemma 2.1]{DKKT}}]\label{constant-unconditional}
If $(e_n)_{n=1}^\infty$ is a $K$-quasi-greedy basis for a real Banach space then
$$\frac{1}{2K}\|\sum_{n\in A}e_n\|\leq\|\sum_{n\in A}\epsilon_ne_n\|\leq 2K\|\sum_{n\in A}e_n\|$$
for every choice of signs $(\epsilon_n)_{n=1}^\infty\in\{\pm 1\}^\mathbb{N}$ and every $A\in\mathbb{N}^{<\infty}$.  In this case we have
$$\|\sum_{n\in A}a_ne_n\|\leq 2K\|\sum_{n\in A}e_n\|\cdot\|(a_n)_{n\in A}\|_\infty$$
for every $(a_n)_{n=1}^\infty\in\mathbb{R}^\mathbb{N}$ and $A\in\mathbb{N}^{<\infty}$.\end{proposition}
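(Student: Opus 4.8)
The plan is to reduce both inequalities of the first display, as well as the second display, to a single \emph{selection estimate}: for every finite set written as a disjoint union $A=A_1\sqcup A_2$ and every pair of signs $\theta_1,\theta_2\in\{\pm 1\}$,
\begin{equation*}
\Big\|\sum_{n\in A_1}e_n\Big\|\leq K\Big\|\theta_1\sum_{n\in A_1}e_n+\theta_2\sum_{n\in A_2}e_n\Big\|.\tag{$\star$}
\end{equation*}
Everything then follows by the triangle inequality. The crucial point, and the reason one gets the sharp factor $2K$ rather than $2K+1$, is that $(\star)$ uses \emph{only} the defining quasi-greedy bound $\|G_m\|\leq K$ applied to a suitable perturbation, and never the lossier estimate $\|x-G_m(x)\|\leq(1+K)\|x\|$ for the complementary projection.

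To prove $(\star)$ I would fix $0<\delta<1$ and introduce the perturbed vector
\begin{equation*}
v_\delta=\theta_1(1+\delta)\sum_{n\in A_1}e_n+\theta_2(1-\delta)\sum_{n\in A_2}e_n.
\end{equation*}
Its nonzero coefficients have modulus $1+\delta$ on $A_1$ and modulus $1-\delta$ on $A_2$; since these two values are \emph{strictly} separated, the $|A_1|$ largest coefficients of $v_\delta$ are unambiguously those indexed by $A_1$, so for $m=|A_1|$ every admissible permutation $\tau$ yields $\Lambda_{m,\tau,v_\delta}=A_1$ and $G_{|A_1|}(v_\delta)=\theta_1(1+\delta)\sum_{n\in A_1}e_n$. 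Quasi-greediness gives $(1+\delta)\|\sum_{n\in A_1}e_n\|=\|G_{|A_1|}(v_\delta)\|\leq K\|v_\delta\|$. As $\delta\to 0^+$ the vector $v_\delta$ converges in norm to $z:=\theta_1\sum_{n\in A_1}e_n+\theta_2\sum_{n\in A_2}e_n$ (a fixed finite combination with coefficients continuous in $\delta$), so passing to the limit yields $\|\sum_{n\in A_1}e_n\|\leq K\|z\|$, which is $(\star)$.

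With $(\star)$ in hand I would finish as follows. Write $A=A^+\sqcup A^-$ according to $\epsilon_n=\pm 1$. Taking $\theta_1=\theta_2=1$ (so $z=\sum_{n\in A}e_n$) and applying $(\star)$ once with $A_1=A^+$ and once with $A_1=A^-$ gives $\|\sum_{n\in A^\pm}e_n\|\leq K\|\sum_{n\in A}e_n\|$, whence $\|\sum_{n\in A}\epsilon_ne_n\|\leq\|\sum_{n\in A^+}e_n\|+\|\sum_{n\in A^-}e_n\|\leq 2K\|\sum_{n\in A}e_n\|$; choosing instead the signs so that $z=\sum_{n\in A^+}e_n-\sum_{n\in A^-}e_n=\sum_{n\in A}\epsilon_ne_n$ gives the reverse inequality $\|\sum_{n\in A}e_n\|\leq 2K\|\sum_{n\in A}\epsilon_ne_n\|$. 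Finally, the second display follows by convexity: if $\|(a_n)_{n\in A}\|_\infty\leq 1$ then $(a_n)_{n\in A}\in[-1,1]^A=\operatorname{conv}\{\pm 1\}^A$, so $\sum_{n\in A}a_ne_n$ is a convex combination of the vectors $\sum_{n\in A}\epsilon_ne_n$ and the first display bounds each of them by $2K\|\sum_{n\in A}e_n\|$; the general case is recovered by rescaling by $\|(a_n)_{n\in A}\|_\infty$. I expect the only genuinely delicate step to be the tie-breaking in $(\star)$: one must perturb to create a \emph{strict} gap at the thresholding level so that $G_{|A_1|}$ is forced to select exactly $A_1$ independently of $\tau$, and then argue by continuity that the strict perturbation costs nothing in the limit.
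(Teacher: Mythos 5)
Your proposal is correct, and it is essentially the standard argument: the paper does not prove this proposition itself but imports it from \cite[Lemma 2.1]{DKKT}, and the proof there follows exactly your route --- a perturbation ($1+\delta$ versus $1-\delta$, or $1$) to break ties so that the greedy algorithm is forced to select a prescribed subset, giving your estimate $(\star)$ with constant $K$, then the triangle inequality applied to the decomposition $A=A^+\sqcup A^-$ for both sign inequalities, and convexity of the norm (extreme points of $[-1,1]^A$ are sign vectors) for the constant-coefficient bound. There are no gaps.
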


\begin{proposition}[{\cite[Lemma 2.2]{DKKT}}]\label{Lemma-2.2}
If $(e_n)_{n=1}^\infty$ is a $K$-quasi-greedy basis for a real Banach space then
$$\|\sum_{n\in A}e_n\|\cdot\min_{n\in A}|a_n|\leq 4K^2\|\sum_{n\in A}a_ne_n\|$$
for every $(a_n)_{n=1}^\infty\in\mathbb{R}^\mathbb{N}$ and $A\in\mathbb{N}^{<\infty}$.\end{proposition}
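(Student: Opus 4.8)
The plan is to reduce the asserted lower estimate to a ``flattening'' (truncation) inequality for the sign vector, and then to invoke the sign-change estimate of Proposition~\ref{constant-unconditional} to pass from the sign vector back to $\sum_{n\in A}e_n$. Write $\alpha=\min_{n\in A}|a_n|$, $\epsilon_n=\sgn(a_n)$, and $x=\sum_{n\in A}a_ne_n$. Since Proposition~\ref{constant-unconditional} gives $\|\sum_{n\in A}e_n\|\le 2K\|\sum_{n\in A}\epsilon_ne_n\|$, it suffices to prove the single-factor truncation bound
\[
\alpha\Big\|\sum_{n\in A}\epsilon_ne_n\Big\|\le 2K\|x\|;
\]
chaining the two inequalities then yields the constant $2K\cdot 2K=4K^2$. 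Thus the whole problem is reduced to controlling, by $\|x\|$, the norm of the \emph{flattened} vector $\alpha\sum_{n\in A}\epsilon_ne_n$, in which every surviving coefficient has been pushed down to the common value $\alpha$.

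To prove the truncation bound I would use a layer-cake representation of $x$ across thresholds. For $t>0$ set $A(t)=\{n\in A:|a_n|>t\}$ and $u(t)=\sum_{n\in A(t)}\epsilon_ne_n$; since $a_n=\epsilon_n\int_0^\infty\mathbf{1}[\,|a_n|>t\,]\,dt$, one has $x=\int_0^\infty u(t)\,dt$. Because $x$ is supported on $A$ and $\min_{n\in A}|a_n|=\alpha$, we have $A(t)=A$ for every $t<\alpha$, so $u(t)$ is constantly equal to $u:=\sum_{n\in A}\epsilon_ne_n$ on $[0,\alpha)$; splitting the integral at $\alpha$ gives $\alpha u=x-\int_\alpha^\infty u(t)\,dt$, whence $\alpha\|u\|\le\|x\|+\int_\alpha^\infty\|u(t)\|\,dt$. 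Each $A(t)$ is a \emph{greedy set} for $x$ (it collects the largest coefficients), so $u(t)$ is a signed sum over the support of a greedy projection $G_{|A(t)|}(x)$, and quasi-greediness together with Proposition~\ref{constant-unconditional} is precisely the tool available to bound these pieces: the estimate $\|G_{|A(t)|}(x)\|\le K\|x\|$ and the sign-change inequality are what one uses to dominate $\int_\alpha^\infty\|u(t)\|\,dt$ and absorb it, producing the remaining factor.

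The main obstacle is exactly this last step: converting the pointwise control of the greedy projections $G_{|A(t)|}(x)$ into a uniform, integrable bound on the flattened sign-sums $u(t)$, since quasi-greediness does \emph{not} furnish unconditionality and a crude bound on the tail integral need not converge to the correct constant. This is where the quasi-greedy hypothesis must be exploited essentially rather than formally, and it is the source of the quadratic dependence $K^2$. I note finally two structural caveats that the argument respects: Proposition~\ref{constant-unconditional} is available only for real scalars, so the reduction is carried out over $\mathbb{R}$; and the passage from $\sum_{n\in A}\epsilon_ne_n$ to $\sum_{n\in A}e_n$ is the sole place where the sign-change estimate, and hence the second factor of $2K$, enters.
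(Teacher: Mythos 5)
Your opening reduction is sound and coincides with the first step of the cited proof (the paper itself does not reprove this proposition; it quotes \cite[Lemma 2.2]{DKKT}): by Proposition~\ref{constant-unconditional} it suffices to prove $\alpha\|\sum_{n\in A}\epsilon_ne_n\|\le 2K\|x\|$ with $\alpha=\min_{n\in A}|a_n|$, $\epsilon_n=\sgn(a_n)$. But the second half --- which is the entire content of the lemma --- is missing, and you say so yourself: the ``main obstacle'' you identify is left standing, with only the assertion that quasi-greediness ``is what one uses.'' As written, the split $\alpha u = x-\int_\alpha^\infty u(t)\,dt$ followed by the triangle inequality under the integral cannot close. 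Quasi-greediness controls the greedy projections $P_t(x):=\sum_{n\in A(t)}a_ne_n$, not the flattened sign-sums $u(t)$; to pass from $P_t(x)$ to $u(t)$ at level $t$ you must divide by the minimal coefficient on $A(t)$, which is exactly the inequality being proved (circular), and even granting it you would get $\|u(t)\|\lesssim K^3\|x\|/t$, so the tail integral picks up a factor $\ln(\max_n|a_n|/\alpha)$ rather than a constant. (Within the paper's toolkit you could instead invoke the truncation estimate, Proposition~\ref{prop: cutoff}, since $\alpha\sum_{n\in A}\epsilon_ne_n$ is precisely the truncation of $x$ at level $\alpha$; that legitimately completes the proof, but with constant $2K(1+3K)$, not the stated $4K^2$.)

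The missing mechanism in \cite{DKKT} is summation by parts over the greedy partial sums, and it is the telescoping --- not a termwise bound on $u(t)$ --- that produces the factor $2K$. Order $A=\{j_1,\dots,j_N\}$ with $b_k:=|a_{j_k}|$ nonincreasing, $b_N\ge\alpha$; since $\epsilon_{j_k}e_{j_k}=(G_k(x)-G_{k-1}(x))/b_k$, Abel summation gives
\begin{equation*}
\alpha\sum_{n\in A}\epsilon_ne_n \;=\; \frac{\alpha}{b_N}\,G_N(x)\;+\;\alpha\sum_{k=1}^{N-1}\Bigl(\frac{1}{b_k}-\frac{1}{b_{k+1}}\Bigr)G_k(x),
\end{equation*}
where the coefficients have total variation $\alpha\bigl(1/b_N+(1/b_N-1/b_1)\bigr)\le 2$ and each $\|G_k(x)\|\le K\|x\|$, yielding $\alpha\|\sum_{n\in A}\epsilon_ne_n\|\le 2K\|x\|$ and hence the constant $4K^2$ after your first reduction. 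Your layer-cake representation is in fact the continuous shadow of this argument and can be salvaged by integrating by parts instead of using the triangle inequality: from $P_s(x)=s\,u(s)+\int_s^\infty u(t)\,dt$ one derives $\alpha u(\alpha)=x-\alpha\int_\alpha^\infty P_s(x)\,s^{-2}\,ds$, whence $\alpha\|u(\alpha)\|\le(1+K)\|x\|\le 2K\|x\|$ using only $\|P_s(x)\|\le K\|x\|$. Without one of these devices, your proposal has a genuine gap at its central step.
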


The reader can also find the above Propositions in \cite{Tsparse}, p.65. 

\begin{theorem}\label{w-democratic}Every normalized $K$-$w$-almost greedy basis for a real Banach space is $w$-democratic with constant $\leq K$.\end{theorem}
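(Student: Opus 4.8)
The plan is to prove the democracy inequality directly from the definition of $w$-almost greediness by exhibiting, for each pair $A,B$ with $w(A)\le w(B)$, a single vector $x$ together with a greedy index $m$ and a permutation $\tau$ for which the left-hand side of the $w$-almost greedy inequality equals exactly $\|\sum_{n\in A}e_n\|$ while the right-hand side is bounded by $\|\sum_{n\in B}e_n\|$.

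Concretely, first I would fix $A,B\in\mathbb{N}^{<\infty}$ with $w(A)\le w(B)$ and set $x=\sum_{n\in A\cup B}e_n$, so that every nonzero coefficient of $x$ equals $1$. Because the nonincreasing rearrangement of these coefficients is constant on $A\cup B$, any ordering of $A\cup B$ is a legitimate choice of $\tau$; I would use this freedom to take $m=|B\setminus A|$ and choose $\tau$ so that $\Lambda=\Lambda_{m,\tau,x}=B\setminus A$. Then $G_m(x)=\sum_{n\in B\setminus A}e_n$, and hence $x-G_m(x)=\sum_{n\in A}e_n$, so the left-hand side is precisely $\|\sum_{n\in A}e_n\|$.

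For the right-hand side, the admissible competitors are the sets $A'$ with $w(A')\le w(\Lambda)=w(B\setminus A)$. The key arithmetic observation is that subtracting $w(A\cap B)$ from both sides of $w(A)\le w(B)$ yields $w(A\setminus B)\le w(B\setminus A)$, so $A'=A\setminus B$ is admissible. Since $x-\sum_{n\in A\setminus B}e_n^*(x)e_n=\sum_{n\in B}e_n$, the infimum on the right-hand side is at most $\|\sum_{n\in B}e_n\|$. Applying the $w$-almost greedy inequality then gives $\|\sum_{n\in A}e_n\|\le K\|\sum_{n\in B}e_n\|$, the desired democracy estimate with constant $K$.

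The step I expect to require the most care is the treatment of the overlap $A\cap B$. The naive route---reducing to disjoint sets and invoking the disjoint case---does not obviously preserve the constant $K$, because the common part $A\cap B$ cannot simply be discarded from $\|\sum_{n\in A}e_n\|$ or $\|\sum_{n\in B}e_n\|$. The construction above sidesteps this entirely: by selecting the greedy support to be $B\setminus A$ and the competitor to be $A\setminus B$, the overlap is absorbed symmetrically, and the only nontrivial inequality needed is $w(A\setminus B)\le w(B\setminus A)$. I would also verify the degenerate cases (such as $A\subseteq B$, $A=\emptyset$, or $m=0$) separately, though each reduces immediately to the same computation with $G_0(x)=0$ and the empty competitor set.
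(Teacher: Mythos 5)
Your proof is correct and follows essentially the same route as the paper: a test vector supported on $A\cup B$ whose greedy set is $B\setminus A$, the competitor set $A\setminus B$, and the key observation that $w(A)\le w(B)$ implies $w(A\setminus B)\le w(B\setminus A)$. The only difference is how ties are broken: the paper gives the coefficients on $B\setminus A$ the value $1+\delta$ so that the greedy set is unambiguous and then lets $\delta\to 0$, whereas you keep all coefficients equal to $1$ and invoke the ``independent of choice of $\tau$'' clause in the definition to select $\Lambda=B\setminus A$ --- which is legitimate under that clause and spares the limiting argument.
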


\begin{proof}Let $A,B\subset\mathbb{N}^{<\infty}$ satisfy $w(A)\leq w(B)$.   Note that this also means $w(A\setminus B)\leq w(B\setminus A)$.  For $\delta>0$ we set
$$y:=\sum_{n\in A}e_n+(1+\delta)\sum_{n\in B\setminus A}e_n\;\;\;\text{ and }\;\;\;m:=|B\setminus A|).$$
By $K$-$w$-almost-greediness, we now have the estimates
\begin{align*}\|\sum_{n\in A}e_n\|
&=\|y-(1+\delta)\sum_{n\in B\setminus A}e_n\|
\\&=\|y-G_m(y)\|
\\&\leq K\|y-\sum_{n\in A\setminus B}e_n\|
\\&=K\|\sum_{n\in B}e_n+\delta\sum_{n\in B\setminus A}e_n\|
\\&\leq K\|\sum_{n\in B}e_n\|+\delta\|\sum_{n\in B\setminus A}e_n\|.
\end{align*}
%
As $\delta>0$ was arbitrary, we have
$$\|\sum_{n\in A}e_n\|\leq K\|\sum_{n\in B}e_n\|.$$
%
\end{proof}

\begin{theorem}\label{T2.5}If a basis $(e_n)_{n=1}^\infty$ for a real Banach space is both $K$-quasi-greedy and $D$-$w$-democratic, then it is $w$-almost greedy with constant $\leq 8K^4D+K+1$.\end{theorem}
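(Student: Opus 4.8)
The plan is to fix $m\in\mathbb{N}$, $x\in X$, a greedy ordering $\tau$, and a competitor set $A\in\mathbb{N}^{<\infty}$ with $w(A)\leq w(\Lambda)$, where $\Lambda=\Lambda_{m,\tau,x}$, and to bound $\|x-G_m(x)\|$ by $(8K^4D+K+1)\|x-P_Ax\|$, writing $P_Ax=\sum_{n\in A}e_n^*(x)e_n$. Taking the infimum over all admissible $A$ then yields the theorem. Since $G_m(x)=P_\Lambda x$, the triangle inequality gives
$$\|x-P_\Lambda x\|\leq\|x-P_Ax\|+\|P_Ax-P_\Lambda x\|,$$
so it suffices to bound the cross term $\|P_Ax-P_\Lambda x\|$ by $(8K^4D+K)\|x-P_Ax\|$. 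I would split $P_Ax-P_\Lambda x=\sum_{n\in F}e_n^*(x)e_n-\sum_{n\in E}e_n^*(x)e_n$, where $E:=\Lambda\setminus A$ and $F:=A\setminus\Lambda$.

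These two sets satisfy $w(F)\leq w(E)$ (subtract $w(A\cap\Lambda)$ from $w(A)\leq w(\Lambda)$), and, setting $\alpha:=\min_{n\in\Lambda}|e_n^*(x)|$, the defining property of the greedy set $\Lambda$ gives $|e_n^*(x)|\leq\alpha$ for $n\in F$ (since $F\cap\Lambda=\emptyset$) and $|e_n^*(x)|\geq\alpha$ for $n\in E$ (since $E\subseteq\Lambda$). For the $F$-term I would apply the second inequality of Proposition \ref{constant-unconditional} to get $\|\sum_{n\in F}e_n^*(x)e_n\|\leq 2K\alpha\|\sum_{n\in F}e_n\|$, then invoke $D$-$w$-democracy (legitimate precisely because $w(F)\leq w(E)$) to replace $\|\sum_{n\in F}e_n\|$ by $D\|\sum_{n\in E}e_n\|$. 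For the $E$-term I would use Proposition \ref{Lemma-2.2} together with $\min_{n\in E}|e_n^*(x)|\geq\alpha$ to obtain $\alpha\|\sum_{n\in E}e_n\|\leq 4K^2\|\sum_{n\in E}e_n^*(x)e_n\|$.

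The step I expect to be the crux is bounding $\|\sum_{n\in E}e_n^*(x)e_n\|$ itself by quasi-greediness. The key observation is that $x-P_Ax=\sum_{n\notin A}e_n^*(x)e_n$, and among its coordinates the members of $E=\Lambda\setminus A$ are exactly the largest: each has modulus $\geq\alpha$, whereas every surviving coordinate of $x-P_Ax$ lying outside $\Lambda$ has modulus $\leq\alpha$. Hence for a suitable greedy ordering one has $\sum_{n\in E}e_n^*(x)e_n=G_{|E|}(x-P_Ax)$, and $K$-quasi-greediness yields $\|\sum_{n\in E}e_n^*(x)e_n\|\leq K\|x-P_Ax\|$. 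Chaining the estimates gives $\alpha\|\sum_{n\in E}e_n\|\leq 4K^3\|x-P_Ax\|$, so the $F$-term is at most $2KD\cdot 4K^3\|x-P_Ax\|=8K^4D\|x-P_Ax\|$ while the $E$-term is at most $K\|x-P_Ax\|$; adding these bounds the cross term by $(8K^4D+K)\|x-P_Ax\|$, and the triangle inequality finishes with the stated constant $8K^4D+K+1$. The only delicate point is the tie-breaking when several coordinates equal $\alpha$, but the freedom to choose the greedy ordering $\tau$ absorbs this ambiguity.
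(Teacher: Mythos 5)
Your proposal is correct and follows essentially the same route as the paper's proof: the same decomposition into the $A\setminus\Lambda$ and $\Lambda\setminus A$ pieces, the same use of Propositions \ref{constant-unconditional} and \ref{Lemma-2.2} combined with $w(A\setminus\Lambda)\le w(\Lambda\setminus A)$ and $D$-$w$-democracy, and the same final observation that $\sum_{n\in\Lambda\setminus A}e_n^*(x)e_n$ is a greedy sum of $x-P_Ax$, yielding the identical constant $8K^4D+K+1$. Your explicit attention to the tie-breaking issue (absorbed by the freedom in choosing $\tau$) is a point the paper passes over silently, but it is the same argument.
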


\begin{proof}Fix $m$ and $x$, and suppose $A\in\mathbb{N}^{<\infty}$ satisfies $w(A)\leq w(\Lambda)$.  We claim that
$$\|x-G_m(x)\|\leq[4K^3(K+1)D+K+1]\cdot\|x-\sum_{n\in A}e_n^*(x)e_n\|,$$
so that, taking the infimum over all such $A$, the proof will be complete.

We begin with the estimate
\begin{align*}
\|x-G_m(x)\|
&=\|x-\sum_{n\in\Lambda}e_n^*(x)e_n\|
\\&\leq\|x-\sum_{n\in A}e_n^*(x)e_n\|+\|\sum_{n\in A}e_n^*(x)e_n-\sum_{n\in\Lambda}e_n^*(x)e_n\|
\\&=\|x-\sum_{n\in A}e_n^*(x)e_n\|+\|\sum_{n\in A\setminus\Lambda}e_n^*(x)e_n-\sum_{n\in\Lambda\setminus A}e_n^*(x)e_n\|
\\&\leq\|x-\sum_{n\in A}e_n^*(x)e_n\|+\|\sum_{n\in A\setminus\Lambda}e_n^*(x)e_n\|+\|\sum_{n\in\Lambda\setminus A}e_n^*(x)e_n\|.
\end{align*}

Observe that $w(A\setminus\Lambda)\leq w(\Lambda\setminus A)$.  Thus,
\begin{align*}\|\sum_{n\in A\setminus\Lambda}e_n^*(x)e_n\|
&\leq 2K\|\sum_{n\in A\setminus\Lambda}e_n\|\cdot\|(e_n^*(x))_{n\in A\setminus\Lambda}\|_\infty
&\text{(Proposition \ref{constant-unconditional})}
\\&\leq 2KD\|\sum_{n\in\Lambda\setminus A}e_n\|\cdot\|(e_n^*(x))_{n\in A\setminus\Lambda}\|_\infty
&\text{(}D\text{-}w\text{-democracy)}
\\&\leq 2KD\|\sum_{n\in\Lambda\setminus A}e_n\|\cdot\min_{n\in\Lambda\setminus A}|e_n^*(x)|
&\text{(definition of }\Lambda\text{)}
\\&\leq 8K^3D\|\sum_{n\in\Lambda\setminus A}e_n^*(x)e_n\|
&\text{(Proposition \ref{Lemma-2.2}).}
\end{align*}
Combining this with the last inequality gives
\begin{equation}\label{3}\|x-G_m(x)\|\leq\|x-\sum_{n\in A}e_n^*(x)e_n\|+(8K^3D+1)\|\sum_{n\in\Lambda\setminus A}e_n^*(x)e_n\|.\end{equation}

Finally, we have
$$\sum_{n\in\Lambda\setminus A}e_n^*(x)e_n=G_s\left(x-\sum_{n\in A}e_n^*(x)e_n\right),\;\;\;s:=|\Lambda\setminus A|,$$
and therefore, by $K$-quasi-greediness,
$$\|\sum_{n\in\Lambda\setminus A}e_n^*(x)e_n\|\leq K\|x-\sum_{n\in A}e_n^*(x)e_n\|.$$
We combine this with \eqref{3} to obtain
$$\|x-G_m(x)\|\leq(8K^4D+K+1)\|x-\sum_{n\in A}e_n^*(x)e_n\|.$$
\end{proof}

\begin{theorem} \label{mainequivalence} A basis for a real Banach space is $w$-almost greedy if and only if it is both quasi-greedy and $w$-democratic.\end{theorem}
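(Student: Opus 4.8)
The plan is to assemble this equivalence directly from the three results already established in this section, since each direction is handled by a prior statement. The theorem is qualitative (no constants are tracked), so I would simply invoke the preceding lemmas and theorems with their implied constants and let the constants be absorbed.

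For the forward direction, suppose $(e_n)_{n=1}^\infty$ is $w$-almost greedy, say with constant $K$. First I would apply Lemma \ref{quasi-greedy} to conclude that the basis is $(K+1)$-quasi-greedy, hence quasi-greedy. Then I would apply Theorem \ref{w-democratic} to conclude that it is $w$-democratic with constant at most $K$. This settles that $w$-almost greedy implies quasi-greedy and $w$-democratic.

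For the reverse direction, suppose the basis is both quasi-greedy (with some constant $K$) and $w$-democratic (with some constant $D$). Here I would invoke Theorem \ref{T2.5}, which yields directly that the basis is $w$-almost greedy with constant at most $8K^4D+K+1$. Combining the two implications gives the stated equivalence.

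I do not anticipate any genuine obstacle: the content of the theorem is entirely contained in Lemma \ref{quasi-greedy}, Theorem \ref{w-democratic}, and Theorem \ref{T2.5}, and the only work is to state the two implications and cite the appropriate result for each. The one point worth a brief remark is that the hypothesis ``quasi-greedy and $w$-democratic'' requires no mention of specific constants because any quasi-greedy basis has some finite quasi-greedy constant and any $w$-democratic basis has some finite democracy constant, so Theorem \ref{T2.5} applies with those constants and produces a finite $w$-almost greedy constant; symmetrically, the forward direction produces specific constants that are immaterial to the qualitative statement.
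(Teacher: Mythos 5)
Your proposal is correct and matches the paper exactly: the paper states Theorem \ref{mainequivalence} without a written proof precisely because it is the immediate combination of Lemma \ref{quasi-greedy}, Theorem \ref{w-democratic}, and Theorem \ref{T2.5}, which is exactly the assembly you describe.
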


\begin{remark}
Our proofs use only the following property of the $w$-measure:
 
$$ w(\emptyset) = 0\quad \text{and}\quad w(A) \le w(B) \Rightarrow w(A \setminus B) \le w(B \setminus A).$$
This property corresponds to a more general case than the one given by a $w$-measure. Indeed,
here is an example of a strictly monotone set function $\nu$ defined on all finite subsets of $\mathbb{N}$ satisfying \textbf{Property (*)}:
$$\nu(\emptyset) = 0\quad \text{and}\quad \nu(A) \le \nu(B) \Rightarrow \nu(A \setminus B) \le \nu(B \setminus A)$$ for which there does not exist a weight $w$ such that
\begin{equation} \label{eq: equiv} \nu(A) \le \nu(B) \Leftrightarrow w(A) \le w(B). \end{equation}
First we define $\nu$ on the power set  of $\{1,2,3\}$ by $\nu(\{1\}) =\nu(\{2\})=\nu(\{3\})=1/4$, $\nu(\{1,2\})=5/16$, $\nu(\{2,3\})=3/8$,
$\nu(\{1,3\})=7/16$, $\nu(\{1,2,3\}) = 1/2$. It is easily seen that $\nu$ is strictly monotone and  has Property (*) (restricted to subsets of $\{1,2,3\}$). 

Note that $\nu$ is not equivalent to
a weight $w$ because $\nu(\{1\}) =\nu(\{2\})=\nu(\{3\})$ would imply by \eqref{eq: equiv} that $w_1=w_2=w_3$ which would imply
that $w(\{1,2\})=w(\{1,3\})=w(\{2,3\})$ which in turn would imply by \eqref{eq: equiv} that  $\nu(\{1,2\})=\nu(\{1,3\})=\nu(\{2,3\}).$

Now we extend $\nu$ to all finite subsets $A$ of $\mathbb{N}$. Write $A = A_1 \cup A_2$ where $A_1= A \cap \{1,2,3\}$, $A_2=A\setminus A_1$. Define
$\nu(A) = \nu(A_1) + |A_2|$. Clearly $\nu$ is strictly monotone. 

Let us check Property (*). Suppose $\nu(A) \le \nu(B)$, where
$A = A_1 \cup A_2$ and $B = B_1 \cup B_2$. Then $\nu(A_1) + |A_2| \le \nu(B_1) + |B_2|$. Now $|\nu(A_1) - \nu(B_1| \le 1/2$
and hence $|A_2| \le |B_2|$. Also $\nu(A \setminus B)  = \nu(A_1 \setminus B_1) + |A_2 \setminus B_2|$ and
$\nu(B \setminus A)  = \nu(B_1 \setminus A_1) + |B_2 \setminus A_2|$.
 
We consider two cases. First, if 
$|A_2| = |B_2|$, then $\nu(A_1) \le \nu(B_1)$ and hence   $\nu(A_1 \setminus B_1) \le \nu(B_1 \setminus A_1)$ since $A_1,B_1 \subseteq \{1,2,3\}$. But also $|A_2 \setminus B_2| = |B_2 \setminus A_2|$ and hence $\nu(A \setminus B) \le \nu(B \setminus A)$.

Secondly, if $|A_2|  \le |B_2| - 1$, then $|A_2 \setminus B_2| \le |B_2 \setminus A_2|-1$ and hence
$$ \nu(A_1 \setminus B_1) + |A_2 \setminus B_2| \le 1/2 + |B_2 \setminus A_2|-1 < \nu(B_1 \setminus A_1) + |B_2 \setminus A_2|,$$
and hence $\nu(A \setminus B) \le \nu(B \setminus A)$.
\end{remark}

\section{Lebesgue-type inequalities} \label{LI} For $x\in X$ and $m\in \mathbb N$ denote
$$
\tilde \sigma_m(x) := \inf\{\|x-\sum_{n\in A} e_n^*(x)e_n\|\,:\, |A|=m\}
$$
the expansional best $m$-term approximation of $x$ with respect to $E:=\{e_n\}_{n=1}^\infty$. The following Lebesgue-type inequality was obtained in \cite{DKKT}: suppose $E$ is $K$-quasi-greedy and $D$-democratic, then
$$
\|x-G_m(x)\| \le C(K)D \tilde \sigma_m(x).
$$
This inequality was slightly generalized in \cite{Tsparse}, p.91, Theorem 3.5.2. Suppose that for a basis $E$ there exists an increasing sequence $v(m):=v(m,E)$ such that, for any two sets of indices $A$ and $B$ with $|A|\le|B|\le m$, we have
\begin{equation}\label{2.3}
\|\sum_{n\in A} e_n\| \le v(m) \|\sum_{n\in B} e_n\|.
\end{equation}
The following statement is from \cite{Tsparse}, p.91, Theorem 3.5.2.

\begin{proposition}\label{P2.4} Let $E$ be a $K$-quasi-greedy basis of $X$ satisfying (\ref{2.3}). Then, for each $x\in X$,
$$
\|x-G_m(x)\| \le C(K)v(m) \tilde \sigma_m(x).
$$
\end{proposition}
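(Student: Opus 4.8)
The plan is to follow the proof of Theorem \ref{T2.5} almost verbatim, substituting the growth function $v(m)$ for the democracy constant $D$ and using the cardinality constraint in place of the $w$-measure constraint. Fix $m$ and $x$, and let $A\in\mathbb{N}^{<\infty}$ with $|A|=m$; since $G_m$ selects the $m$ largest coefficients we have $|\Lambda|=m$ as well. Starting from the triangle inequality,
\[
\|x-G_m(x)\|\le\|x-\sum_{n\in A}e_n^*(x)e_n\|+\|\sum_{n\in A\setminus\Lambda}e_n^*(x)e_n\|+\|\sum_{n\in\Lambda\setminus A}e_n^*(x)e_n\|,
\]
I would estimate the middle term and then absorb the last term via quasi-greediness.

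The step that replaces the democracy argument of Theorem \ref{T2.5} is the following observation: since $|A|=|\Lambda|=m$, we have $|A\setminus\Lambda|=m-|A\cap\Lambda|=|\Lambda\setminus A|$, and both quantities are at most $m$. Hence \eqref{2.3} applies to the pair $A\setminus\Lambda$, $\Lambda\setminus A$ and yields $\|\sum_{n\in A\setminus\Lambda}e_n\|\le v(m)\|\sum_{n\in\Lambda\setminus A}e_n\|$. Feeding this into the same chain as in Theorem \ref{T2.5}---Proposition \ref{constant-unconditional}, then \eqref{2.3}, then the defining property of $\Lambda$ (every coefficient indexed outside $\Lambda$ is dominated by $\min_{n\in\Lambda\setminus A}|e_n^*(x)|$), and finally Proposition \ref{Lemma-2.2}---I expect to obtain
\[
\|\sum_{n\in A\setminus\Lambda}e_n^*(x)e_n\|\le 8K^3v(m)\|\sum_{n\in\Lambda\setminus A}e_n^*(x)e_n\|.
\]

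To finish, I would note that $\sum_{n\in\Lambda\setminus A}e_n^*(x)e_n=G_s(x-\sum_{n\in A}e_n^*(x)e_n)$ for $s:=|\Lambda\setminus A|$, so $K$-quasi-greediness bounds its norm by $K\|x-\sum_{n\in A}e_n^*(x)e_n\|$. Combining the three displays gives $\|x-G_m(x)\|\le(8K^4v(m)+K+1)\|x-\sum_{n\in A}e_n^*(x)e_n\|$; taking the infimum over all $A$ with $|A|=m$ produces the factor $\tilde\sigma_m(x)$. Since \eqref{2.3} with $A=B$ forces $v(m)\ge 1$, the constant $8K^4v(m)+K+1$ is at most $(8K^4+K+1)v(m)$, so the claimed bound holds with $C(K)=8K^4+K+1$. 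There is no real obstacle here beyond checking the one cardinality identity $|A\setminus\Lambda|=|\Lambda\setminus A|$ that licenses the use of \eqref{2.3}; everything else is the structural skeleton of Theorem \ref{T2.5}.
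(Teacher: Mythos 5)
Your proof is correct and follows essentially the same route as the paper: the paper gives no separate proof of Proposition \ref{P2.4} (citing Temlyakov's book) but explicitly notes that the argument of Theorem \ref{T2.5} yields it, and your specialization—using $|A\setminus\Lambda|=|\Lambda\setminus A|\le m$ to invoke \eqref{2.3} in place of $w$-democracy, then Propositions \ref{constant-unconditional} and \ref{Lemma-2.2} and quasi-greediness—is exactly that argument. The final observation that $v(m)\ge 1$ lets you absorb $8K^4v(m)+K+1$ into $C(K)v(m)$ is a small but necessary touch, and you handled it correctly.
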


The above proof of Theorem \ref{T2.5} gives us the following version of Proposition \ref{P2.4} 
in the weighted case. Suppose that for given weight sequence $w=\{w_n\}_{n=1}^\infty$ and basis $E$ there exists an increasing function $v(u,w):=v(u,E,w)$ such that, for any two sets of indices $A$ and $B$ with $w(A)\le w(B)\le u$, we have
\begin{equation}\label{2.4}
\|\sum_{n\in A} e_n\| \le v(u,w) \|\sum_{n\in B} e_n\|.
\end{equation}

For $x\in X$ and $u>0$ denote for a given weight sequence $w$
$$
\tilde \sigma^w_u(x) := \inf\{\|x-\sum_{n\in A} e_n^*(x)e_n\|\,:\, w(A)\le u\}.
$$

\begin{theorem}\label{T2.6} Let $E$, a $K$-quasi-greedy basis of $X$, and a weight sequence $w$ satisfy (\ref{2.4}). Then, for each $x\in X$,
\begin{equation}\label{2.5}
\|x-G_m(x)\| \le C(K)v(w(\Lambda_m),w) \tilde \sigma^w_{w(\Lambda_m)}(x),
\end{equation}
where $\Lambda_m$ is from
$$
G_m(x) = \sum_{n\in \Lambda_m} e_n^*(x)e_n.
$$
\end{theorem}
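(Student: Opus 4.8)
The plan is to run the proof of Theorem \ref{T2.5} essentially unchanged, simply substituting hypothesis \eqref{2.4} (with parameter $u=w(\Lambda_m)$) for the use of $D$-$w$-democracy. Fix $m$ and $x$, write $\Lambda=\Lambda_m$, and let $A\in\mathbb{N}^{<\infty}$ be arbitrary with $w(A)\le w(\Lambda)$. As in Theorem \ref{T2.5}, after cancelling the common indices $A\cap\Lambda$, the triangle inequality gives
$$\|x-G_m(x)\|\le\|x-\sum_{n\in A}e_n^*(x)e_n\|+\|\sum_{n\in A\setminus\Lambda}e_n^*(x)e_n\|+\|\sum_{n\in\Lambda\setminus A}e_n^*(x)e_n\|,$$
so the task reduces to bounding the two residual sums.

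First I would estimate $\|\sum_{n\in A\setminus\Lambda}e_n^*(x)e_n\|$ by the same chain as before: Proposition \ref{constant-unconditional} pulls out the factor $2K$ together with $\|(e_n^*(x))_{n\in A\setminus\Lambda}\|_\infty$; hypothesis \eqref{2.4} replaces $\|\sum_{n\in A\setminus\Lambda}e_n\|$ by $v(w(\Lambda_m),w)\|\sum_{n\in\Lambda\setminus A}e_n\|$; the defining property of $\Lambda$ bounds the sup-norm of the coefficients by $\min_{n\in\Lambda\setminus A}|e_n^*(x)|$; and Proposition \ref{Lemma-2.2} converts this back into $\|\sum_{n\in\Lambda\setminus A}e_n^*(x)e_n\|$ at the cost of $4K^2$. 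The result is
$$\|\sum_{n\in A\setminus\Lambda}e_n^*(x)e_n\|\le 8K^3v(w(\Lambda_m),w)\,\|\sum_{n\in\Lambda\setminus A}e_n^*(x)e_n\|.$$
The only new verification here is that \eqref{2.4} is legitimately applicable to the pair $(A\setminus\Lambda,\Lambda\setminus A)$ with $u=w(\Lambda_m)$: from $w(A)\le w(\Lambda)$ one gets $w(A\setminus\Lambda)\le w(\Lambda\setminus A)$, while $\Lambda\setminus A\subseteq\Lambda=\Lambda_m$ forces $w(\Lambda\setminus A)\le w(\Lambda_m)$, so the admissibility condition $w(A\setminus\Lambda)\le w(\Lambda\setminus A)\le w(\Lambda_m)$ holds.

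Finally, as in Theorem \ref{T2.5}, writing $\sum_{n\in\Lambda\setminus A}e_n^*(x)e_n=G_s(x-\sum_{n\in A}e_n^*(x)e_n)$ with $s=|\Lambda\setminus A|$ and applying $K$-quasi-greediness bounds the remaining residual by $K\|x-\sum_{n\in A}e_n^*(x)e_n\|$. Combining all three displays yields the coefficient $8K^4v(w(\Lambda_m),w)+K+1$ in front of $\|x-\sum_{n\in A}e_n^*(x)e_n\|$; since taking $A=B$ in \eqref{2.4} shows $v(w(\Lambda_m),w)\ge1$, this coefficient is at most $(8K^4+K+1)\,v(w(\Lambda_m),w)$, and taking the infimum over admissible $A$ gives \eqref{2.5} with $C(K)=8K^4+K+1$. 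I do not anticipate any real obstacle: the argument is structurally identical to Theorem \ref{T2.5}, and the only delicate point is the weight bookkeeping just described, namely that $u=w(\Lambda_m)$ is large enough to invoke \eqref{2.4} and that $v\ge1$ lets the additive $K+1$ be absorbed into the multiplicative factor.
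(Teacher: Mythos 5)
Your proposal is correct and is exactly the argument the paper intends: Theorem~\ref{T2.6} is stated with the remark that ``the above proof of Theorem~\ref{T2.5} gives'' it, and you have carried out precisely that adaptation, substituting hypothesis~\eqref{2.4} for $D$-$w$-democracy and correctly checking the two points that make the substitution legitimate (the admissibility $w(A\setminus\Lambda)\le w(\Lambda\setminus A)\le w(\Lambda_m)$ and the absorption of the additive $K+1$ via $v\ge 1$).
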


We note that the left hand side of (\ref{2.5}) does not depend on the weight sequence $w$, but the right hand side of (\ref{2.5}) does depend on $w$. Therefore, our setting with weights allows us to estimate $\|x-G_m(x)\|$ choosing different weight sequences $w$ and optimize over them. 

Theorem \ref{T2.6} gives a Lebesgue-type inequality in terms of expansional best $m$-term 
approximation. We now consider a question of replacing the expansional best $m$-term 
approximation by the best $m$-term approximation
$$
 \sigma^w_u(x) := \inf\{\|x-\sum_{n\in A} c_ne_n\|\,:\, c_n, n\in A, \, w(A)\le u\}
$$
where inf is taken over all sets of indices $A$ with $w$-measure $w(A)\le u$ and all coefficients $c_n$, $n\in A$. In case $w_i=1$, $i\in \mathbb N$, we drop $w$ from the notation. 
It is clear that
$$
\sigma^w_u(x,E) \le \tilde\sigma^w_u(x,E).
$$
It is also clear that for an unconditional basis $E$ we have
$$
\tilde\sigma^w_u(x,E) \le C(X,E)\sigma^w_m(x,E).
$$

We recall some known useful properties of quasi-greedy bases.  For a given element $x \in X$ we consider
the expansion
$$
x=\sum_{k=1}^{\infty}e^*_k(x)e_k.
$$
Let a sequence $k_j,j=1,2,...,$ of 
positive integers be such that
$$
|e^*_{k_1}(x)|\geq |e^*_{k_2}(x)|\geq... \,\,.
$$
We use the notation
$$
a_j(x):=|e^*_{k_j}(x)|
$$
for the decreasing rearrangement of the coefficients of $x$. 
It will be convenient in this section to redefine the quasi-greedy constant $K$ to be the least constant such that
$$
\|G_m(f)\|\le K\|f\|\quad\text{and}\quad \|f-G_m(f)\|\le K\|f\|, \quad f\in X.
$$

The following Lemma \ref{L2.2n} is from \cite{DKKT} (see also \cite{Tsparse}, p.66).

\begin{lemma}\label{L2.2n} Suppose $E = \{e_n\}_{n\in\mathbb N}$ has
quasi-greedy constant $K$. For any $x\in X$ and any $n\in\mathbb N$ we have
\begin{equation}\label{2.7n}
a_n(x)\|\sum_{j=1}^ne_{k_j}\| \le 4K^2\|x\|.
\end{equation}
\end{lemma}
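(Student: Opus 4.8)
The plan is to bound the two scalars separately by reducing everything to the uniform bound $\|G_j(x)\| \le K\|x\|$ on the greedy sums. Writing $A = \{k_1,\dots,k_n\}$, $\alpha = a_n(x) = \min_{1\le j\le n}|e_{k_j}^*(x)|$, and $\epsilon_j = \sgn(e_{k_j}^*(x))$, I would first split the target by passing to a signed sum: the first estimate of Proposition \ref{constant-unconditional}, applied to the set $A$, gives
$$a_n(x)\Big\|\sum_{j=1}^n e_{k_j}\Big\| = \alpha\Big\|\sum_{j=1}^n e_{k_j}\Big\| \le 2K\cdot\alpha\Big\|\sum_{j=1}^n \epsilon_j e_{k_j}\Big\|.$$
Thus it suffices to prove the \emph{truncation bound} $\alpha\|\sum_{j=1}^n \epsilon_j e_{k_j}\| \le 2K\|x\|$, after which multiplying the two factors produces the constant $4K^2$.

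For the truncation bound, which I expect to be the crux, I would represent the flat signed vector as a telescoping combination of the greedy sums of $x$ itself. Since $G_j(x) - G_{j-1}(x) = e_{k_j}^*(x)e_{k_j} = a_j(x)\,\epsilon_j e_{k_j}$ (with $G_0(x) = 0$), Abel summation yields
$$\sum_{j=1}^n \epsilon_j e_{k_j} = \sum_{j=1}^n \frac{1}{a_j(x)}\big(G_j(x) - G_{j-1}(x)\big) = \frac{G_n(x)}{a_n(x)} + \sum_{j=1}^{n-1}\Big(\frac{1}{a_j(x)} - \frac{1}{a_{j+1}(x)}\Big)G_j(x).$$
Because the rearrangement is nonincreasing we have $a_j(x) \ge a_{j+1}(x)$, so each scalar $\frac{1}{a_j(x)} - \frac{1}{a_{j+1}(x)}$ is nonpositive; taking norms, invoking $\|G_j(x)\| \le K\|x\|$ for every $j$, and telescoping the resulting coefficients gives
$$\Big\|\sum_{j=1}^n \epsilon_j e_{k_j}\Big\| \le K\|x\|\Big[\frac{1}{a_n(x)} + \Big(\frac{1}{a_n(x)} - \frac{1}{a_1(x)}\Big)\Big] \le \frac{2K}{\alpha}\|x\|,$$
which is exactly the asserted bound $\alpha\|\sum_{j=1}^n \epsilon_j e_{k_j}\| \le 2K\|x\|$.

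Combining the two steps then gives $a_n(x)\|\sum_{j=1}^n e_{k_j}\| \le 4K^2\|x\|$, as required. The one delicate point is the truncation estimate: a naive attempt to realize $\alpha\sum_{j=1}^n \epsilon_j e_{k_j}$ as a single greedy projection $G_n$ of a vector whose top $n$ coefficients have been clipped to level $\alpha$ leads to a circular inequality in which $\|\alpha\sum_{j=1}^n \epsilon_j e_{k_j}\|$ reappears on the right-hand side (since that truncation is itself the residual plus $G_n$ of the clipped vector). The Abel-summation representation above avoids this by expressing the flat vector through \emph{all} the partial greedy sums $G_1(x),\dots,G_n(x)$ at once; this is what breaks the circularity and keeps the dependence on $K$ linear at this stage. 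Everything else reduces to the triangle inequality, the monotonicity of the nonincreasing rearrangement, and Proposition \ref{constant-unconditional}.
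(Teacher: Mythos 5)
Your proof is correct and is essentially the argument behind the cited source: the paper itself quotes this lemma from \cite{DKKT} (see also \cite{Tsparse}, p.~66), where the standard proof is exactly your two steps --- reduce $\|\sum_{j=1}^n e_{k_j}\|$ to $2K\|\sum_{j=1}^n \epsilon_j e_{k_j}\|$ via the sign-unconditionality of Proposition~\ref{constant-unconditional}, then bound $a_n(x)\|\sum_{j=1}^n \epsilon_j e_{k_j}\|$ by $2K\|x\|$ through Abel summation over the greedy partial sums $G_j(x)$, using the monotonicity of $a_j(x)$. The only (trivial) points left implicit are the degenerate case $a_n(x)=0$ and that ties in the rearrangement are harmless since the quasi-greedy bound holds for every admissible ordering $\tau$.
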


For a set of indices $\Lambda$ define
$$
S_\Lambda(x,E) := \sum_{k\in \Lambda} e^*_k(x)e_k.
$$
The following Lemma \ref{L2.3t} is from \cite{DKK} (see also \cite{Tsparse}, p.67).

\begin{lemma}\label{L2.3t} Let $E$ be a quasi-greedy basis of $X$. Then for any finite set of indices $\Lambda$, $|\Lambda|=m$, we have for all $x\in X$
$$
\|S_\Lambda(x,E)\| \le C \ln(m+1)\|x\|.  
$$
\end{lemma}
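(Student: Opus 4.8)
The plan is to normalize $\|x\|=1$ and to extract the logarithm from a dyadic decomposition of the coefficient values on $\Lambda$, restricted to a window of width $O(\log m)$. The point is that each dyadic value-layer will contribute only a constant independent of the layer, and that all but a constant amount of $\|S_\Lambda(x,E)\|$ is carried by layers whose coefficients lie within a factor $m$ of the largest coefficient $M:=\max_{k\in\Lambda}|e_k^*(x)|$; there are only about $\log_2 m$ such layers.

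First I would record two scale bounds. Applying Lemma \ref{L2.2n} with $n=1$ gives $M\le a_1(x)\le 4K^2\|x\|=4K^2$, so the top coefficient is bounded. Next I would split $\Lambda=\Lambda'\cup\Lambda''$, where $\Lambda''=\{k\in\Lambda:|e_k^*(x)|\le M/m\}$ collects the tiny coefficients. Since $|\Lambda''|\le m$ and the basis is normalized, the triangle inequality gives $\|\sum_{k\in\Lambda''}e_k\|\le m$, and then Proposition \ref{constant-unconditional} yields $\|\sum_{k\in\Lambda''}e_k^*(x)e_k\|\le 2K\|\sum_{k\in\Lambda''}e_k\|\cdot(M/m)\le 2KM\le 8K^3$, a constant. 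Thus the tiny coefficients are harmless and it remains to treat $\Lambda'$.

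For $\Lambda'$ I would use the dyadic layers $\Lambda_j:=\{k\in\Lambda:2^{-j}<|e_k^*(x)|\le 2^{-j+1}\}$. Because every coefficient in $\Lambda'$ lies in $(M/m,M]$, only $\lceil\log_2 m\rceil+1$ of these layers are nonempty. The crux is to show that each nonempty layer contributes at most a constant $C(K)$. I would bound $\|\sum_{k\in\Lambda_j}e_k^*(x)e_k\|$ using Proposition \ref{constant-unconditional} by $2K\cdot 2^{-j+1}\|\sum_{k\in\Lambda_j}e_k\|$; then, writing $T_j:=\{k:|e_k^*(x)|>2^{-j}\}\supseteq\Lambda_j$ and using the suppression inequality $\|\sum_{k\in\Lambda_j}e_k\|\le\tfrac{1+2K}{2}\|\sum_{k\in T_j}e_k\|$ (which follows from Proposition \ref{constant-unconditional} by writing $\sum_{k\in\Lambda_j}e_k=\tfrac12(\sum_{k\in T_j}e_k+\sum_{k\in T_j}\epsilon_k e_k)$ with $\epsilon_k=1$ on $\Lambda_j$ and $-1$ on $T_j\setminus\Lambda_j$), I would reduce to estimating $\|\sum_{k\in T_j}e_k\|$. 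Since $S_{T_j}(x,E)$ is a greedy sum, quasi-greediness gives $\|S_{T_j}(x,E)\|\le K$, and Proposition \ref{Lemma-2.2} together with $\min_{k\in T_j}|e_k^*(x)|>2^{-j}$ yields $\|\sum_{k\in T_j}e_k\|\le 4K^2\|S_{T_j}(x,E)\|\cdot 2^{j}\le 4K^3 2^j$. The factors $2^{-j}$ and $2^j$ cancel, leaving $\|\sum_{k\in\Lambda_j}e_k^*(x)e_k\|\le C(K)$ for every $j$. Summing the $O(\log m)$ layers of $\Lambda'$ and adding the constant contribution of $\Lambda''$ gives $\|S_\Lambda(x,E)\|\le C(K)(1+\log_2 m)+8K^3$, which is $\le C\ln(m+1)$ after adjusting the constant.

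I expect the main obstacle to be the per-layer estimate, specifically arranging the cancellation so that the bound on a single layer is genuinely scale-invariant: one must pass from the arbitrary subset $\Lambda_j$ of $T_j$ (via suppression unconditionality, valid here because quasi-greediness makes the basis unconditional for constant coefficients by Proposition \ref{constant-unconditional}) and then convert the fundamental-function quantity $\|\sum_{k\in T_j}e_k\|$ back into the bounded greedy sum $\|S_{T_j}(x,E)\|$ using Proposition \ref{Lemma-2.2}. A secondary technical point is the treatment of coefficients lying exactly on a dyadic threshold $2^{-j}$ (equivalently, ensuring that each $T_j$ really is a greedy set), which I would handle by choosing thresholds avoiding the finitely many coefficient values of $x$, or by a harmless perturbation.
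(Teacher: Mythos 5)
Your proof is correct and is essentially the argument of the paper's cited source (\cite{DKK}; see also \cite{Tsparse}, p.~67), to which the paper defers rather than giving its own proof: dyadic value-layers between $M$ and $M/m$, a scale-invariant $O(1)$ bound per layer obtained from Proposition \ref{constant-unconditional}, quasi-greediness applied to the greedy set $T_j$, and Proposition \ref{Lemma-2.2}, plus a trivial bound for the coefficients below $M/m$. Two cosmetic points: since $T_j$ is defined by a strict inequality, every coefficient inside exceeds $2^{-j}$ while every coefficient outside is at most $2^{-j}$, so $T_j$ is automatically a greedy set and no perturbation of thresholds is needed; and a boundary layer $\Lambda_j$ may overlap $\Lambda''$, which is repaired by applying your per-layer estimate to $\Lambda_j \setminus \Lambda'' \subseteq T_j$, where it holds verbatim.
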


Lemma \ref{L2.3t} was used in \cite{GHO} and \cite{DS-BT} to prove the following inequality.

\begin{lemma}\label{L2.4t} Let $E$ be a quasi-greedy basis of $X$. Then for all $x\in X$ 
$$
\tilde\sigma_m(x) \le C\ln(m+1) \sigma_m(x).
$$
\end{lemma}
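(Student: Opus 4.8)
The inequality $\sigma_m(x) \le \tilde\sigma_m(x)$ is immediate from the definitions, since every expansional $m$-term approximation is in particular an $m$-term approximation. The plan is to prove the reverse inequality up to the stated logarithmic factor by starting from a near-optimal best $m$-term approximant and converting it into an expansional one supported on the same index set. Fix $\epsilon > 0$ and choose $A \in \mathbb{N}^m$ together with scalars $(c_n)_{n\in A}$ so that, writing $z := \sum_{n\in A} c_n e_n$, we have $\|x - z\| \le \sigma_m(x) + \epsilon$. I would then bound $\tilde\sigma_m(x)$ by using the \emph{same} support $A$, i.e. by estimating $\|x - S_A(x,E)\|$.

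The key observation is that since $z$ is supported on $A$ we have $e_n^*(z) = c_n$ for $n \in A$ and $e_n^*(z) = 0$ otherwise, so that $S_A(z, E) = z$. By linearity this yields the identity
$$
x - S_A(x, E) = (x - z) - S_A(x - z, E).
$$
Now I would apply Lemma \ref{L2.3t} to the element $x - z$ with the set $A$ (recall $|A| = m$), obtaining $\|S_A(x - z, E)\| \le C \ln(m+1) \|x - z\|$. Combining this with the triangle inequality in the displayed identity gives
$$
\|x - S_A(x, E)\| \le \|x - z\| + \|S_A(x - z, E)\| \le (1 + C\ln(m+1)) \|x - z\|.
$$

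Since $\tilde\sigma_m(x) \le \|x - S_A(x,E)\|$ and $\|x - z\| \le \sigma_m(x) + \epsilon$, letting $\epsilon \to 0$ produces $\tilde\sigma_m(x) \le (1 + C\ln(m+1))\sigma_m(x)$; absorbing the additive $1$ into the logarithm (using $\ln(m+1) \ge \ln 2$ for $m \ge 1$) gives the claimed bound $\tilde\sigma_m(x) \le C'\ln(m+1)\sigma_m(x)$ after relabeling the constant. I do not expect any serious obstacle here once Lemma \ref{L2.3t} is available; the only real idea is the reduction via $S_A(z,E) = z$, which transfers the entire difficulty of comparing arbitrary coefficients to expansional ones onto a single application of the logarithmic bound for the coordinate projection $S_A$. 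The logarithmic loss is thus inherited directly from Lemma \ref{L2.3t}, and one would not expect to improve it in general without further hypotheses on the basis.
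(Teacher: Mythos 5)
Your proof is correct, and it is essentially the argument the paper intends: the paper gives no proof of its own for this lemma, stating only that it follows from Lemma~\ref{L2.3t} as in \cite{GHO} and \cite{DS-BT}, and your reduction (reuse the support $A$ of a near-best approximant $z$, note $S_A(z,E)=z$, and apply Lemma~\ref{L2.3t} to $x-z$) is exactly that standard argument. The details — the identity $x-S_A(x,E)=(x-z)-S_A(x-z,E)$, the triangle inequality, and absorbing the additive $1$ into $\ln(m+1)$ — are all sound.
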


We prove an estimate for $\tilde\sigma^w_n(x,E)$ in terms of $\sigma^w_m(x,E)$ for a quasi-greedy basis $E$. For a basis $E$ we define the fundamental function
$$
\varphi^w (u) := \sup _{A:w(A)\le u} \|\sum_{k\in A} e_k\|.
$$
We also need the following function
$$
  \phi^w(u):= \inf _{A:w(A) > u} \|\sum_{k\in A} e_k\|.
$$
In the case $w_i=1$, $i\in \mathbb N$, we drop $w$ from the notation.
The following inequality was obtained in \cite{DKKT}.
\begin{lemma}\label{L2.4kt} Let $E$ be a quasi-greedy basis. Then for any $m$ and $r$ there exists a set $F$, $|F|\le m+r$ such that
$$
\|x-S_F(x)\| \le C\left(1+\frac{\varphi(m)}{\phi(r)}\right)\sigma_m(x)
$$
and, therefore, for any $x\in X$
$$
\tilde \sigma_{m+r}(x) \le C\left(1+\frac{\varphi(m)}{\phi(r)}\right)\sigma_m(x).
$$
\end{lemma}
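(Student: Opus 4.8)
The plan is to build the set $F$ by combining the support of a near-optimal algebraic $m$-term approximation with the greedy set of the resulting error, and then reduce everything to an estimate involving only that error. First I would fix $x$, $m$, $r$ and $\epsilon>0$, and choose coefficients $(c_n)_{n\in A}$ with $|A|=m$ so that, writing $z:=\sum_{n\in A}c_ne_n$ and $y:=x-z$, we have $\|y\|\le(1+\epsilon)\sigma_m(x)$. Let $B$ be the greedy set of $y$, i.e. the set of indices of its $r$ largest coefficients, so that $S_B(y)=G_r(y)$ and $|B|=r$. Finally set $F:=A\cup B$, which satisfies $|F|\le m+r$.

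The key observation is that $F$ was chosen to contain $A$, so the algebraic part $z$ is reproduced exactly by $S_F$: since $z$ is supported on $A\subseteq F$, with $e_k^*(z)=c_k$ for $k\in A$ and $0$ otherwise, we get $S_F(z)=z$. Hence
$$x-S_F(x)=(z+y)-S_F(z)-S_F(y)=y-S_F(y),$$
which removes all dependence on the (possibly large) coefficients $c_n$. Now I split this error according to $F=A\cup B$ as
$$y-S_F(y)=\bigl(y-S_B(y)\bigr)-S_{A\setminus B}(y)=\bigl(y-G_r(y)\bigr)-S_{A\setminus B}(y),$$
so that $\|x-S_F(x)\|\le\|y-G_r(y)\|+\|S_{A\setminus B}(y)\|$.

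The first term is immediately controlled by quasi-greediness: $\|y-G_r(y)\|\le K\|y\|$. For the second term I would use that $A\setminus B$ lies outside the greedy set $B$ of $y$, so every coefficient there satisfies $|e_n^*(y)|\le a_{r+1}(y)$; applying Proposition \ref{constant-unconditional} and then monotonicity of the fundamental function gives
$$\|S_{A\setminus B}(y)\|\le 2K\Bigl\|\sum_{n\in A\setminus B}e_n\Bigr\|\,a_{r+1}(y)\le 2K\varphi(m)\,a_{r+1}(y).$$
To convert $a_{r+1}(y)$ into a factor of $1/\phi(r)$, apply Lemma \ref{L2.2n} to $y$ with $n=r+1$: since $\|\sum_{j=1}^{r+1}e_{k_j}\|$ is a sum over a set of size $r+1>r$, it is bounded below by $\phi(r)$, whence $a_{r+1}(y)\le 4K^2\|y\|/\phi(r)$. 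Combining the three estimates yields $\|x-S_F(x)\|\le C(K)(1+\varphi(m)/\phi(r))\|y\|$; letting $\epsilon\to 0$ replaces $\|y\|$ by $\sigma_m(x)$. The final claim $\tilde\sigma_{m+r}(x)\le\|x-S_F(x)\|$ then follows because $|F|\le m+r$ and $\tilde\sigma_n(x)$ is non-increasing in $n$ (the coefficients $e_k^*(x)$ of a normalized basis tend to $0$, so enlarging an approximating set costs arbitrarily little).

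I expect the main obstacle to be the bookkeeping in the second paragraph: one must take $F\supseteq A$ precisely so that $S_F$ fixes $z$ and the bound becomes independent of the unbounded algebraic coefficients $c_n$, and then the overlap set $A\setminus B$—rather than all of $A$—must be isolated so that its coefficients are genuinely small (bounded by $a_{r+1}(y)$). Chaining Proposition \ref{constant-unconditional} with Lemma \ref{L2.2n}, and identifying $\phi(r)$ as the correct lower bound for $\|\sum_{j=1}^{r+1}e_{k_j}\|$, is where the functions $\varphi$ and $\phi$ enter in the stated ratio.
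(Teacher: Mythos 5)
Your proof is correct and follows essentially the same route as the paper's argument (given there for the weighted version, Lemma~\ref{L2.4w}, which specializes to this statement when $w_i\equiv 1$): take a near-optimal $m$-term approximant supported on $A$, adjoin the greedy set $B$ of the error, use $S_F(z)=z$ to reduce to $y-S_B(y)-S_{A\setminus B}(y)$, and control the two pieces by quasi-greediness and by Proposition~\ref{constant-unconditional} combined with Lemma~\ref{L2.2n}, which is exactly where the ratio $\varphi(m)/\phi(r)$ arises in the paper as well. Your closing remark on passing from $|F|\le m+r$ to $\tilde\sigma_{m+r}$ (enlarging $F$ by far-out indices at negligible cost) is a correct handling of a detail the paper leaves implicit.
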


We now prove the following weighted version of Lemma \ref{L2.4kt}.

\begin{lemma}\label{L2.4w} Let $E$ be a quasi-greedy basis. Then for any $u$ and $v$  we have for each $x\in X$
$$
\tilde \sigma^w_{u+v}(x) \le C\left(1+\frac{\varphi(u)}{\phi(v)}\right)\sigma^w_u(x).
$$
\end{lemma}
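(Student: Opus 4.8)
The plan is to adapt the argument for Lemma \ref{L2.4kt} to the weighted setting by constructing an explicit set $F$ with $w(F)\le u+v$ and controlling $\|x-S_F(x)\|$, which then bounds $\tilde\sigma^w_{u+v}(x)$ from above. I would fix $x$ and $\epsilon>0$ and choose a near-optimal best approximation $P=\sum_{n\in A}c_ne_n$ with $w(A)\le u$ and $\|x-P\|\le\sigma^w_u(x)+\epsilon$. Writing $y:=x-P$, I let $k_1,k_2,\ldots$ enumerate the support of $y$ so that $a_j(y)=|e_{k_j}^*(y)|$ is nonincreasing.

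The one genuinely new ingredient is the choice of the greedy set $B$: in the unweighted proof one takes the top $r$ coefficients, but here cardinality must be replaced by $w$-measure. I would let $p$ be the largest integer with $w(\{k_1,\ldots,k_p\})\le v$, and set $B:=\{k_1,\ldots,k_p\}$ and $F:=A\cup B$. This choice plays a dual role: on the one hand $w(F)\le w(A)+w(B)\le u+v$, so $F$ is admissible for $\tilde\sigma^w_{u+v}(x)$; on the other hand, by maximality of $p$ the set $\{k_1,\ldots,k_{p+1}\}$ has $w$-measure strictly greater than $v$, which is exactly what is needed to invoke the definition of $\phi^w(v)$.

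With $F$ in hand the estimates follow the template of Lemma \ref{L2.4kt}. Since $P$ is supported on $A\subseteq F$ one has $x-S_F(x)=y-S_F(y)$, and splitting $F=B\cup(A\setminus B)$ gives $\|x-S_F(x)\|\le\|y-S_B(y)\|+\|S_{A\setminus B}(y)\|$. The first term equals $\|y-G_p(y)\|\le K\|y\|$ by quasi-greediness. For the second, Proposition \ref{constant-unconditional} yields $\|S_{A\setminus B}(y)\|\le 2K\|\sum_{n\in A\setminus B}e_n\|\cdot\|(e_n^*(y))_{n\in A\setminus B}\|_\infty\le 2K\,\varphi^w(u)\,a_{p+1}(y)$, using $w(A\setminus B)\le u$ and the fact that every index outside $B$ carries a coefficient at most $a_{p+1}(y)$. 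Finally, Lemma \ref{L2.2n} applied to $\{k_1,\ldots,k_{p+1}\}$ gives $a_{p+1}(y)\|\sum_{j=1}^{p+1}e_{k_j}\|\le 4K^2\|y\|$, and since that set has $w$-measure exceeding $v$ we have $\|\sum_{j=1}^{p+1}e_{k_j}\|\ge\phi^w(v)$, whence $a_{p+1}(y)\le 4K^2\|y\|/\phi^w(v)$. Combining the two bounds gives $\|x-S_F(x)\|\le(K+8K^3\varphi^w(u)/\phi^w(v))\|y\|\le C(K)(1+\varphi^w(u)/\phi^w(v))(\sigma^w_u(x)+\epsilon)$, and letting $\epsilon\to 0$ finishes the proof (here $\varphi^w,\phi^w$ are the functions written $\varphi,\phi$ in the statement, the superscript being suppressed).

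I expect the main obstacle to be the correct formulation and justification of the choice of $B$, together with the degenerate case in which no finite $p$ works, i.e.\ when $w(\{k_1,\ldots,k_n\})\le v$ for every $n$. In that case I would instead take $B$ to be a long enough initial segment $\{k_1,\ldots,k_N\}$; since $\|y-G_N(y)\|\to 0$ and $a_{N+1}(y)\to 0$ for a quasi-greedy (hence convergent) basis, the set $F=A\cup B$ still satisfies $w(F)\le u+v$ and makes $\|x-S_F(x)\|$ arbitrarily small, so the asserted inequality holds trivially.
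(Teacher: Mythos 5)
Your proposal is correct and follows essentially the same route as the paper: a near-optimal approximant supported on $A$ with $w(A)\le u$, the maximal greedy set $B$ of the residual with $w(B)\le v$ (so that the next greedy set has $w$-measure exceeding $v$), the decomposition $x-S_{A\cup B}(x)=(y-S_B(y))-S_{A\setminus B}(y)$, quasi-greediness for the first term, and Lemma \ref{L2.2n} together with Proposition \ref{constant-unconditional} for the second, yielding the same constant $8K^3=(2K)^3$. Your explicit treatment of the degenerate case in which every initial greedy segment has $w$-measure at most $v$ is a small point of extra rigor that the paper's proof passes over silently.
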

\begin{proof} For an arbitrary $\epsilon>0$,   let  $A$ be a set, $w(A)\le u$, and $p_u(x)$ be a polynomial such that
\begin{equation}\label{1.8kt}
\|x-p_u(x)\|\le \sigma^w_u(x)+\epsilon, \quad p_u(x) = \sum_{k\in A}b_ke_k. 
\end{equation}
Denote $g:= x-p_u(x)$. Let $m$ be such that for the set $B:=\Lambda_m(g)$ we have  $w(B)\le v$ and $w(\Lambda_{m+1}(g))>v$. Consider
$$
G_m(g) = \sum_{k\in B}e^*_k(g)e_k.
$$
We have
\begin{equation}\label{1.9kt}
x-S_{A\cup B}(x) = g-S_{A\cup B}(g) = g-S_B(g) -S_{A\setminus B}(g).
 \end{equation}
By the assumption that $E$ is quasi-greedy and by the definition of $B$ we get
\begin{equation}\label{1.10kt}
\|g-S_B(g)\| \le C_1\|g\| \le C_1(\sigma^w_u(x)+\epsilon).  
\end{equation}
Let us estimate $\|S_{A\setminus B}(g)\|$.  By Lemma \ref{L2.2n} we get
$$
\max_{k\in A\setminus B}|e^*_k(g)|\le a_{m+1}(g) \le 4K^2(\phi(v))^{-1}\|g\|.
$$
Next, by Proposition \ref{constant-unconditional} we obtain
\begin{equation}\label{1.11kt}
\|S_{A\setminus B}(g)\| \le (2K)^3 \varphi(u)\phi(v)^{-1}\|g\|.  
\end{equation}
Combining (\ref{1.10kt}) and (\ref{1.11kt}) we derive from (\ref{1.9kt}) for $F:=A\cup B$
$$
\|f-S_F(x)\| \le C(K)\left(1+\frac{\varphi(u)}{\phi(v)}\right)(\sigma^w_u(x)+\epsilon).
$$
Lemma \ref{L2.4w} is proved.
\end{proof}

\begin{theorem}\label{T3.1} Let $E$ be a $K$-$w$-quasi-greedy and $D$-$w$-democratic. Then, for any $x\in X$, we have
$$
\|x-G_m(x)\| \le C(K,D)\sigma^w_{w(\Lambda_m)/2}(x).
$$
\end{theorem}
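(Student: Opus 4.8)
The plan is to chain the $w$-almost greedy Lebesgue inequality (stated in terms of the expansional error $\tilde\sigma^w$) with the weighted $\tilde\sigma^w$-versus-$\sigma^w$ comparison of Lemma \ref{L2.4w}, and to absorb the resulting fundamental-function ratio using $w$-democracy a second time. Throughout write $s:=w(\Lambda_m)$; since $\Lambda_m\neq\emptyset$ and the weights are positive, $s>0$, so the measures $s$ and $s/2$ are meaningful.

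First I would observe that a $K$-quasi-greedy and $D$-$w$-democratic basis satisfies hypothesis (\ref{2.4}) with the constant choice $v(u,w)\equiv D$: indeed $w(A)\le w(B)$ forces $\|\sum_{n\in A}e_n\|\le D\|\sum_{n\in B}e_n\|$ by definition of democracy, so $D$ is a valid (nondecreasing) comparison function for every $u$. Theorem \ref{T2.6} then gives
$$\|x-G_m(x)\|\le C(K)\,v(s,w)\,\tilde\sigma^w_{s}(x)=C(K)D\,\tilde\sigma^w_{s}(x).$$
(Equivalently, one could invoke Theorems \ref{mainequivalence} and \ref{T2.5} to see the basis is $w$-almost greedy, which yields the same bound $\|x-G_m(x)\|\le C(K,D)\,\tilde\sigma^w_{s}(x)$.)

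Next I would apply Lemma \ref{L2.4w} with the \emph{balanced} splitting $u=v=s/2$, so that $u+v=s$, obtaining
$$\tilde\sigma^w_{s}(x)=\tilde\sigma^w_{u+v}(x)\le C\left(1+\frac{\varphi^w(s/2)}{\phi^w(s/2)}\right)\sigma^w_{s/2}(x).$$
The decisive point is that splitting the measure into two equal halves makes this ratio bounded by $D$: for any $A$ with $w(A)\le s/2$ and any $B$ with $w(B)>s/2$ one has $w(A)\le w(B)$, so $D$-$w$-democracy gives $\|\sum_{n\in A}e_n\|\le D\|\sum_{n\in B}e_n\|$; taking the supremum over such $A$ and then the infimum over such $B$ yields $\varphi^w(s/2)\le D\,\phi^w(s/2)$, i.e.\ $\varphi^w(s/2)/\phi^w(s/2)\le D$. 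Substituting and combining the two displays gives
$$\|x-G_m(x)\|\le C(K)D\cdot C(1+D)\,\sigma^w_{s/2}(x)=C(K,D)\,\sigma^w_{w(\Lambda_m)/2}(x),$$
as claimed.

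I expect the only delicate step to be the balancing choice $u=v=s/2$. Lemma \ref{L2.4w} trades $\tilde\sigma^w_s$ for $\sigma^w_u$ at the cost of the factor $\varphi^w(u)/\phi^w(s-u)$, and democracy controls this factor \emph{precisely} when its two arguments coincide, i.e.\ when $u=s-u$. Any unequal split would leave a ratio of fundamental functions at different measures, which $w$-democracy alone cannot bound, so the factor $1/2$ appearing in the statement is exactly what the equal-halves splitting forces. Everything else is bookkeeping of the constants $C(K)$, $D$, and the absolute constant from Lemma \ref{L2.4w}.
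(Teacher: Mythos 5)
Your proposal is correct and follows essentially the same route as the paper: the paper likewise first invokes Theorem \ref{T2.5} to get $\|x-G_m(x)\|\le C(K)D\,\tilde\sigma^w_{w(\Lambda_m)}(x)$ and then applies Lemma \ref{L2.4w} with the balanced split $u=v=w(\Lambda_m)/2$, using $\varphi^w(u)\le D\phi^w(u)$ to absorb the fundamental-function ratio. The only differences are cosmetic: you route the first step through Theorem \ref{T2.6} with $v(u,w)\equiv D$ (noting the equivalence with the paper's use of Theorem \ref{T2.5}), and you spell out the democracy argument behind $\varphi^w(u)\le D\phi^w(u)$, which the paper states without proof.
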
 
\begin{proof} 
By Theorem \ref{T2.5} we have
\begin{equation}\label{3.9}
\|x-G_m(x)\| \le C(K)D\tilde \sigma^w_{w(\Lambda_m)}(x).
\end{equation}
Using inequality $\varphi^w(u)\le D\phi^w(u)$, by Lemma \ref{L2.4w} with $u=v=w(\Lambda_m)/2$ we obtain
\begin{equation}\label{3.10}
 \tilde \sigma^w_{w(\Lambda_m)}(x) \le C(K)(1+D)\sigma^w_{w(\Lambda_m)/2}(x).
\end{equation}
Combining (\ref{3.9}) and (\ref{3.10}) we complete the proof of Theorem \ref{T3.1}.

\end{proof}

\section{$w$-Semi-greedy bases} \label{sec: semigreedy}
In this section
we consider an obvious enhancement of the TGA which improves
the rate of convergence. Suppose that $x \in X$ and let $\rho$
be the greedy ordering for $x$.
Let $\overline{G}_m(x) \in \operatorname{span}\{e_{\rho(n)} \colon 1 \le n \le m\}$
be a Chebyshev approximation to $x$. Thus,
\begin{equation*}
\|x-\overline{G}_m(x)\| = \min\{\|x - \sum_{n=1}^m a_n e_{\rho(n)}\| \colon (a_n)_{n=1}^m \in \mathbb{R}^m\}. \end{equation*}
For $y>0$, let
$$\sigma^w_y(x) := \inf \{ \|x - \sum_{n \in A} a_n e_n\| \colon w(A) \le y, a_n\in \mathbb{R}\}$$ 
denote the error in the best approximation to $x$ by vectors of support of weight at most $y$.
\begin{definition}We say that a basis $(e_n)_{n=1}^\infty$ for a Banach space $X$ is {\bf $\boldsymbol{w}$-semi-greedy with constant $\boldsymbol{\overline{K}}$} if
$$\|x-\overline{G}_m(x)\|\leq \overline{K}\sigma^w_{w(\Lambda_m)}(x)$$
for all $m\in\mathbb{N}$ and $x\in X$.
\end{definition}

Our first goal is to prove that every $w$-almost greedy basis is $w$-semi-greedy, which generalizes \cite[Theorem 3.2]{DKK}.
To that end, we recall  an important property of the `truncation function' proved in \cite{DKK}.

Fix $M>0$. Define the truncation function $f_M\colon \mathbb{R} \rightarrow [-M,M]$
thus: \begin{equation*}
f_M(x) = \begin{cases} M &\text{for $x>M$};\\
x &\text{for $x \in [-M,M]$};\\
-M &\text{for $x<-M$}.\end{cases}
\end{equation*}

 \begin{proposition}[{\cite[Proposition 3.1]{DKK}}] \label{prop: cutoff}
 Suppose that $(e_n)$ is $K$-quasi-greedy. Then, for every $M>0$ and for all real scalars $(a_n)$, we have
\begin{equation*}
\|\sum_{i=1}^\infty f_M(a_n) e_n\| \le
(1+3K)\|\sum_{n=1}^\infty a_n e_n\|. \end{equation*}
\end{proposition}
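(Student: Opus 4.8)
The plan is to reduce the whole estimate to a single summation-by-parts argument. First I would observe that we may assume $x=\sum_n a_n e_n$ converges in $X$ (otherwise the right-hand side is infinite), so $a_n\to 0$ and the set $A:=\{n:|a_n|>M\}$ is \emph{finite}; moreover $A$ is a greedy set for $x$, since every coordinate of $A$ dominates in modulus every coordinate outside $A$. Writing $\epsilon_n=\sgn(a_n)$, the truncation alters $x$ only on $A$, where $f_M(a_n)=M\epsilon_n$, so that
$$\sum_n f_M(a_n)e_n=x-\sum_{n\in A}\epsilon_n\bigl(|a_n|-M\bigr)e_n=:x-r.$$
Hence $\|\sum_n f_M(a_n)e_n\|\le\|x\|+\|r\|$, and it suffices to prove $\|r\|\le K\|x\|$, which already gives a constant better than the asserted $1+3K$.

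The key observation is that $r$ is obtained from $x$ (restricted to $A$) by a \emph{nonincreasing multiplier}. Indeed $\epsilon_n(|a_n|-M)=\lambda_n a_n$ with $\lambda_n:=1-M/|a_n|\in[0,1)$, and since $\lambda_n$ increases with $|a_n|$, the scalars $\lambda_n$ are nonincreasing along the greedy ordering of $x$. Enumerating $A=\{n_1,\dots,n_p\}$ with $|a_{n_1}|\ge\cdots\ge|a_{n_p}|$, so that $\lambda_{n_1}\ge\cdots\ge\lambda_{n_p}\ge 0$, and setting $P_j:=\sum_{i=1}^{j}a_{n_i}e_{n_i}$, I would apply Abel summation:
$$r=\sum_{j=1}^{p}\lambda_{n_j}a_{n_j}e_{n_j}=\lambda_{n_p}P_p+\sum_{j=1}^{p-1}\bigl(\lambda_{n_j}-\lambda_{n_{j+1}}\bigr)P_j.$$
Because $A$ is a greedy set, each partial sum $P_j$ is exactly the greedy projection $G_j(x)$, so $\|P_j\|\le K\|x\|$ by quasi-greediness. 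As the coefficients $\lambda_{n_p}$ and $\lambda_{n_j}-\lambda_{n_{j+1}}$ are all nonnegative and telescope to $\lambda_{n_1}\le 1$, the triangle inequality yields $\|r\|\le K\|x\|\,\lambda_{n_1}\le K\|x\|$.

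Combining the two displays gives $\|\sum_n f_M(a_n)e_n\|\le(1+K)\|x\|\le(1+3K)\|x\|$, as claimed. The step I expect to be the crux is the second one: recognizing $r$ as a nonincreasing-multiplier transform of $x$ and identifying its partial sums with the thresholding projections $G_j(x)$. The only delicate bookkeeping is verifying that $A$ is genuinely a greedy set, which requires fixing a greedy ordering $\tau$ compatible with the enumeration of $A$ and a brief remark about coordinates of equal modulus; the monotonicity $\lambda_{n_1}\ge\cdots\ge\lambda_{n_p}$ is then immediate. Notably this route uses only the boundedness $\|G_j(x)\|\le K\|x\|$ of the thresholding operators and avoids any appeal to Proposition \ref{constant-unconditional} or Proposition \ref{Lemma-2.2}, which would only produce a far worse constant.
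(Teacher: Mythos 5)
Your proof is correct, and it is a genuinely cleaner route than the one behind the cited result. Note first that the paper offers no proof of Proposition~\ref{prop: cutoff} at all: it quotes \cite[Proposition 3.1]{DKK}. The constant $1+3K$ there arises as $(1+K)+2K$ from a different decomposition: with $A=\{n:|a_n|>M\}$, the truncated vector is split as $\bigl(x-G_{|A|}(x)\bigr)+M\sum_{n\in A}\sgn(a_n)e_n$; the first piece is bounded by $(1+K)\|x\|$ by quasi-greediness, and the second by $2K\|x\|$ via a sign-vector estimate for greedy sets (itself proved by a telescoping argument of exactly the kind you use, and closely related to Propositions~\ref{constant-unconditional} and~\ref{Lemma-2.2}). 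You instead keep $x$ whole and subtract the ``excess'' $r=\sum_{n\in A}\sgn(a_n)(|a_n|-M)e_n$, observe that $r$ is the image of $x$ under the multipliers $\lambda_n=1-M/|a_n|\in[0,1)$, which are nonincreasing along the greedy ordering of $A$, and then run a single Abel summation against the greedy projections $P_j=G_j(x)$. Since the Abel coefficients $\lambda_{n_p}$ and $\lambda_{n_j}-\lambda_{n_{j+1}}$ are nonnegative and telescope to $\lambda_{n_1}\le 1$, and each $\|P_j\|\le K\|x\|$, you get $\|r\|\le K\|x\|$ and hence the sharper constant $1+K$. Your bookkeeping is in order: $A$ is finite because $a_n\to 0$; it is a greedy set because $|a_n|>M\ge|a_m|$ whenever $n\in A$ and $m\notin A$, so ties never straddle the boundary; the decreasing enumeration of $A$ extends to a legitimate greedy ordering of $\mathbb{N}$; and the Abel identity checks out coefficient by coefficient. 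The trade-off between the two approaches: the route in \cite{DKK} factors the work through reusable lemmas that are needed elsewhere in the theory anyway, whereas your argument is self-contained, invokes nothing beyond $\|G_j(x)\|\le K\|x\|$, and yields a strictly better constant.
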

\begin{theorem} \label{thm: almostgreedysemigreedy}
Every $w$-almost greedy basis of a real Banach space $X$ is $w$-semi-greedy. \end{theorem}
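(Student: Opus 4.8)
The plan is to show that the $w$-almost greedy constant controls the $w$-semi-greedy constant, i.e. to bound $\|x - \overline{G}_m(x)\|$ by a constant times $\sigma^w_{w(\Lambda_m)}(x)$. The essential point is that a $w$-almost greedy basis gives us control over $\|x - G_m(x)\|$ in terms of the \emph{expansional} best approximation $\tilde\sigma^w_{w(\Lambda)}(x)$, whereas $w$-semi-greedy demands a bound in terms of the \emph{unrestricted} best approximation $\sigma^w_{w(\Lambda_m)}(x)$ (with arbitrary coefficients). So the proof must convert an unrestricted near-optimal approximation into the Chebyshev greedy approximant $\overline{G}_m(x)$ while paying only a constant factor. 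Since $\overline{G}_m(x)$ is itself optimal (over coefficients) on the support $\Lambda_m$, it suffices to exhibit \emph{some} vector supported on $\Lambda_m$ whose distance to $x$ is $\le C \sigma^w_{w(\Lambda_m)}(x)$; then $\|x-\overline{G}_m(x)\|$ is at most that distance.

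First I would fix $x$ and $m$, write $\Lambda = \Lambda_m$, and choose, for $\epsilon>0$, a set $A$ with $w(A)\le w(\Lambda)$ and scalars $(c_n)_{n\in A}$ so that the vector $z := \sum_{n\in A} c_n e_n$ satisfies $\|x-z\| \le \sigma^w_{w(\Lambda)}(x)+\epsilon$. Set $g := x - z$, so $\|g\|$ is essentially $\sigma^w_{w(\Lambda)}(x)$. The idea is that $z$ is supported on a set $A$ of $w$-measure at most $w(\Lambda)$, but not on $\Lambda$ itself; I would therefore build a candidate supported on $\Lambda$ by correcting $z$ on the symmetric difference. The key tool is Proposition~\ref{prop: cutoff} (the truncation lemma): by truncating the coefficients of $z$ at the level $M := \min_{n\in\Lambda}|e_n^*(x)|$ (the smallest greedy coefficient), one produces a vector whose coefficients are uniformly bounded by $M$, so that applying $G$ to $x$ minus this truncated vector cleanly captures the coordinates of $\Lambda$. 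Concretely, I would run the greedy algorithm on $g$, or equivalently compare the supports, and use quasi-greediness (Lemma~\ref{quasi-greedy} gives quasi-greediness from $w$-almost greediness) together with the truncation bound to show that the residual off $\Lambda$ can be absorbed.

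Let me describe the mechanism more precisely. Having $g = x - z$, I would consider $G_k(g)$ for the appropriate $k$ and note that its support has $w$-measure comparable to $w(\Lambda)$; the truncation function applied at level $M = \min_{n\in\Lambda}|e_n^*(x)|$ lets me replace the ``large'' coefficients of $z$ that live outside $\Lambda$ by values bounded by $M$, incurring only the factor $(1+3K)$ from Proposition~\ref{prop: cutoff}. The goal is to produce an explicit vector $y$ with $\supp(y)\subseteq\Lambda$ and $\|x-y\|\le C(K)\sigma^w_{w(\Lambda)}(x)$; since $\overline{G}_m(x)$ is by definition the best approximant of $x$ from $\spa\{e_n : n\in\Lambda\}$, we get $\|x-\overline{G}_m(x)\|\le\|x-y\|\le C(K)\sigma^w_{w(\Lambda)}(x)$, which is exactly $w$-semi-greediness. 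The use of $w(A)\le w(\Lambda)$ enters through the democracy/almost-greedy estimates (as in the proof of Theorem~\ref{T2.5}) when bounding the norms of the correction terms supported on $A\setminus\Lambda$ versus $\Lambda\setminus A$, via the implication $w(A\setminus\Lambda)\le w(\Lambda\setminus A)$.

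The main obstacle I anticipate is the handling of the coefficients on $A\setminus\Lambda$: these are coordinates where the approximant $z$ is active but which the greedy algorithm did \emph{not} select for $x$, so the coefficients $c_n$ there could be arbitrarily large, and one cannot simply discard them. This is exactly where the truncation function is indispensable — it caps those coefficients at the threshold $M$ without increasing the norm by more than a constant factor, allowing the democracy hypothesis (which compares $\sum_{n\in A\setminus\Lambda}e_n$ with $\sum_{n\in\Lambda\setminus A}e_n$ using $w(A\setminus\Lambda)\le w(\Lambda\setminus A)$) and Proposition~\ref{Lemma-2.2} to close the estimate. Keeping careful track that the truncated corrections stay supported within $\Lambda$ (or within a set of the right $w$-measure that can be folded into $\Lambda$) is the delicate bookkeeping step; once that is arranged, the remaining inequalities are routine applications of quasi-greediness and the constant-coefficient estimates already invoked in the proof of Theorem~\ref{T2.5}.
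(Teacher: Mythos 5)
Your proposal is correct and takes essentially the same route as the paper's own proof: choose a near-optimal approximant $z$ supported on a set $B$ with $w(B)\le w(\Lambda)$, truncate at the level $M$ of the smallest greedy coefficient via Proposition~\ref{prop: cutoff}, control the correction terms on $B\setminus\Lambda$ using constant-coefficient unconditionality and $w$-democracy applied to $w(B\setminus\Lambda)\le w(\Lambda\setminus B)$, and conclude from the Chebyshev optimality of $\overline{G}_m(x)$ over $\operatorname{span}\{e_n:n\in\Lambda\}$. The only cosmetic difference is that the paper truncates the coefficients of $x-z$ (rather than of $z$ itself) and writes down the explicit vector $v$ with $x-v$ supported on $\Lambda$, which is precisely the bookkeeping step you flag as delicate.
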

\begin{proof} By Lemma~\ref{quasi-greedy} and Theorem~\ref{w-democratic}, $(e_n)$ is quasi-greedy
and $w$-democratic. Let $K$ and $D$ the quasi-greedy and
democratic constants of $(e_n)$, respectively.
 Fix $m\ge1$ and $x = \sum_{n=1}^\infty a_n e_n$ in $X$. Let $\Lambda := \Lambda_{m,\tau,x}$.
Let $z := \sum_{n \in B} b_n e_n$, where $w(B) \le w(\Lambda)$, satisfy
\begin{equation*}\|x-z\| \le 2\sigma^w_{w(\Lambda)}(x). \end{equation*}
If $B=\Lambda$ then there is nothing to prove. So we may assume (since $w(B) \le w(\Lambda)$)
that $\Lambda \setminus B$ is nonempty. Let $k = |\Lambda \setminus B|$, so that $1\le k\le m$,
and let $M = |a_{\rho(m)}|$. Then, using both parts of Proposition~\ref{constant-unconditional},
\begin{equation} \label{eq: x-z1}
M\| \sum_{n \in \Lambda\setminus B} e_n\|\le 2KM\|\sum_{n \in \Lambda} e_n\| \le 4K^2 \|x-z\|, \end{equation}
since $|e^*_n(x-z)| \ge M$ for all $n \in \Lambda \setminus B$.  Let
\begin{equation*}
x-z:=\sum_{n=1}^\infty y_ne_n. \end{equation*}
 By Proposition~\ref{prop: cutoff}, we have
 \begin{equation} \label{eq: x-z2}
\|\sum_{n=1}^\infty f_M(y_n)e_n\| \le (1+3K)\|x-z\|. \end{equation}
Note that
\begin{align*}
v:&= \sum_{n \in \Lambda}f_M(y_n)e_n + \sum_{n \in \mathbb{N}\setminus \Lambda}a_ne_n\\
&= \sum_{n=1}^\infty f_M(y_n) e_n +
\sum_{n \in B\setminus \Lambda}(a_n - f_M(y_n))e_n\end{align*}
since $a_n = y_n = f_M(y_n)$ for all $n \in \mathbb{N} \setminus (\Lambda \cup B)$
Hence, \begin{align*}
\|v\| &\le \|\sum_{n=1}^\infty f_M(y_n) e_n\| +
 \|\sum_{n \in B\setminus \Lambda}(a_n - f_M(y_n))e_n\|\\
&\le (1+3K)\|x-z\| + 4KM \|\sum_{n \in B \setminus \Lambda} e_n\|
\intertext{(by \eqref{eq: x-z2} and by
Proposition~\ref{constant-unconditional}, since $|a_n - f_M(y_n)| \le 2M$ for all $n\in B\setminus \Lambda$)}
&\le (1+3K)\|x-z\| + 4KDM \|\sum_{n \in \Lambda \setminus B} e_n\|\\
\intertext{(since $w(B\setminus \Lambda)\le w(\Lambda \setminus B)$ and $(e_n)$ is $w$-democratic)}
&\le (1 + 3K + 16K^3D)\|x-z\|\\
 \end{align*} by \eqref{eq: x-z1}.  Taking the infimum  over all $z$ gives 
$$ \|v\| \le (1 + 3K + 16K^3D) \sigma^w_{w(\Lambda)}(x)$$
Since $v=x-\sum_{n\in \Lambda}
(a_n-f_M(y_n))e_n$,   we conclude that $(e_n)$ is semi-greedy with constant $1 + 3K + 16K^3D$.
\end{proof}
\begin{corollary} Suppose $(e_n)$ is $w$-almost greedy. Then, for all $x \in X$ and $m \ge 1$, 
$$\| x - G_m(x)\| \le (1 + 3K + 16K^3D) \sigma^w_{w(\Lambda)}(x) + 2K |e^*_{\rho(m)}(x)| \|\sum_{n \in \Lambda} e_n\|. $$
\end{corollary}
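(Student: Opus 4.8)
The plan is to harvest the construction already built in the proof of Theorem~\ref{thm: almostgreedysemigreedy} and simply reconnect it to the \emph{unaltered} greedy error $x-G_m(x)$. Recall from that proof the auxiliary vector
$$v = \sum_{n\in\Lambda}f_M(y_n)e_n + \sum_{n\in\mathbb{N}\setminus\Lambda}a_ne_n,$$
with $a_n=e_n^*(x)$, $x-z=\sum_n y_ne_n$, and $M=|a_{\rho(m)}|=|e^*_{\rho(m)}(x)|$, for which it was shown that $\|v\|\le(1+3K+16K^3D)\sigma^w_{w(\Lambda)}(x)$. The only genuinely new step is an algebraic one, so I would not pass through the Chebyshev approximant at all.

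First I would observe that, since $G_m(x)=\sum_{n\in\Lambda}a_ne_n$, we have $x-G_m(x)=\sum_{n\notin\Lambda}a_ne_n$, and therefore the displayed expression for $v$ rearranges as
$$v = \Big(\sum_{n\notin\Lambda}a_ne_n\Big) + \sum_{n\in\Lambda}f_M(y_n)e_n = (x-G_m(x)) + \sum_{n\in\Lambda}f_M(y_n)e_n.$$
Consequently $x-G_m(x)=v-\sum_{n\in\Lambda}f_M(y_n)e_n$, and the triangle inequality reduces everything to estimating the correction term $\sum_{n\in\Lambda}f_M(y_n)e_n$ on top of the already-established bound for $\|v\|$.

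For the correction term I would invoke the second inequality of Proposition~\ref{constant-unconditional}: since $|f_M(y_n)|\le M$ for every $n$, we get
$$\|\sum_{n\in\Lambda}f_M(y_n)e_n\| \le 2K\|\sum_{n\in\Lambda}e_n\|\cdot\|(f_M(y_n))_{n\in\Lambda}\|_\infty \le 2KM\|\sum_{n\in\Lambda}e_n\|.$$
Substituting $M=|e^*_{\rho(m)}(x)|$ and combining with $\|v\|\le(1+3K+16K^3D)\sigma^w_{w(\Lambda)}(x)$ via the triangle inequality yields exactly the claimed inequality.

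I do not expect a real obstacle here, since all the analytic work (the truncation estimate of Proposition~\ref{prop: cutoff}, the democracy comparison, and the quasi-greedy unconditionality) is already packaged inside the bound on $\|v\|$. The only point demanding care is the bookkeeping identity that $a_n=y_n=f_M(y_n)$ for $n\notin(\Lambda\cup B)$, which is what legitimizes the rearrangement of $v$; once that is in hand, truncating the $\Lambda$-coefficients at the level $M=|e^*_{\rho(m)}(x)|$ is precisely the operation that converts the semi-greedy (Chebyshev) estimate into a pointwise estimate for $x-G_m(x)$, with the size of the truncation accounting for the extra additive term $2K|e^*_{\rho(m)}(x)|\|\sum_{n\in\Lambda}e_n\|$.
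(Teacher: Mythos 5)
Your proposal is correct and follows essentially the same route as the paper: the paper's own proof also writes $x - G_m(x) = v - \sum_{n\in\Lambda} f_M(y_n)e_n$, bounds the correction term by $2K|e^*_{\rho(m)}(x)|\,\|\sum_{n\in\Lambda}e_n\|$ via Proposition~\ref{constant-unconditional}, and concludes by combining with the bound on $\|v\|$ (the only cosmetic difference being that the paper keeps $\|x-z\|$ and takes the infimum over $z$ at the very end, which is the cleaner way to handle the fact that $v$ depends on $z$).
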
 
\begin{proof} Using the notation of the last result,  note that  $x - G_m(x) = v - \sum_{n \in \Lambda} f_M(y_n) e_n$. Hence
\begin{align*} \|x - G_m(x)\| &\le \|v\| + \|\sum_{n \in \Lambda} f_M(y_n) e_n\|\\
&\le (1 + 3K + 16K^3D)\|x-z\| + 2K|e^*_{\rho(m)}(x)| \|\sum_{n \in \Lambda} e_n\|, \end{align*}
by Proposition~\ref{constant-unconditional} recalling that $M = |e^*_{\rho(m)}(x)|$. Now take the infimum of $\|x-z\|$ over $z$
to get the result.
\end{proof}

The remainder of this section investigates the converse of Theorem~\ref{thm: almostgreedysemigreedy}.
To that end we first show that certain properties of the weight sequence $(w_n)$ imply the existence of  subsequences of $(e_n)$ that are equivalent to the unit vector basis of $c_0$. 

\begin{proposition} \label{prop: weightproperties}  Let $(e_n)$ be a $w$-semi-greedy basis with constant $\overline{K}$ and let $\beta$ be the basis constant.  Suppose $A \in \mathbb{N}^{<\infty}$. \begin{itemize} 
\item[(i)] If $w(A) \le \limsup_{n \rightarrow \infty} w_n$ then $\max_\pm\|\sum_{n \in A} \pm e_n\| \le 2\beta \overline{K}$.
\item[(ii)] If $\sum_{n=1}^\infty w_n <\infty$ then $(e_n)$ is equivalent to the unit vector basis of $c_0$.
\item[(iii)] If $\sup w_n = \infty$ then $(e_n)$ is equivalent to the unit vector basis of $c_0$.
\item[(iv)] If $\inf w_n =0$ then $(e_n)$ contains a subsequence equivalent to the unit vector basis of $c_0$.
\end{itemize}\end{proposition}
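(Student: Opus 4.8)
The plan is to reduce all four parts to a single uniform bound on sign-sums and then read off $c_0$-equivalence. Recall the standard principle: if $(x_k)$ is a normalized basic sequence with basis constant $\le\beta$ and $\sup\{\|\sum_{k\in F}\pm x_k\|:F\text{ finite}\}\le C$, then $(x_k)$ is equivalent to the unit vector basis of $c_0$. The lower estimate $\max_k|a_k|\le 2\beta\|\sum_k a_k x_k\|$ comes from $\|x_k^*\|\le 2\beta$, while the upper estimate $\|\sum_k a_k x_k\|\le C\max_k|a_k|$ follows because $a\mapsto\|\sum_k a_k x_k\|$ is convex and symmetric in each coordinate, so its maximum over the cube $[-1,1]^F$ is attained at a vertex, i.e.\ at a choice of signs. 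Thus (ii), (iii), and (iv) will follow once one controls $\max_\pm\|\sum_{n\in A}\pm e_n\|$ on the relevant family of sets $A$, and part (i) is exactly a bound of this type.

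For (i), fix $A$ with $w(A)\le\limsup_n w_n$ and signs $(\epsilon_n)_{n\in A}$. Let $n_0\in A$ have least weight; since $\limsup_n w_n\ge w(A)>w(A\setminus\{n_0\})$, there are infinitely many indices with $w_n>w(A\setminus\{n_0\})$, so I may pick $j>\max A$ with $w_j\ge w(A\setminus\{n_0\})$. For $t>1$ put $x=\sum_{n\in A}\epsilon_n e_n+t e_j$, so that $e_j$ is the unique largest coefficient and $\Lambda_1=\{j\}$. Applying the partial sum projection $P_{j-1}$ (of norm $\le\beta$) to $x-\overline{G}_1(x)$ kills the $e_j$-term and leaves $\sum_{n\in A}\epsilon_n e_n$, whence $\|\sum_{n\in A}\epsilon_n e_n\|\le\beta\|x-\overline{G}_1(x)\|$. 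On the other hand, choosing the set $A\setminus\{n_0\}$ (which has $w$-measure $\le w_j$) with coefficients $\epsilon_n$ shows $\sigma^w_{w_j}(x)\le\|\epsilon_{n_0}e_{n_0}+t e_j\|\le 1+t$, so $w$-semi-greediness with $m=1$ gives $\|x-\overline{G}_1(x)\|\le\overline{K}(1+t)$. Combining and letting $t\downarrow 1$ yields $\|\sum_{n\in A}\epsilon_n e_n\|\le 2\beta\overline{K}$; as this is uniform in the signs, (i) follows.

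Parts (iii) and the easy case of (iv) are then immediate. Since $(w_n)$ is unbounded exactly when $\limsup_n w_n=\infty$, in case (iii) every finite $A$ satisfies the hypothesis of (i), so $\max_\pm\|\sum_{n\in A}\pm e_n\|\le 2\beta\overline{K}$ for all finite $A$ and the principle above gives $(e_n)\sim c_0$. For (iv), if $\limsup_n w_n=L>0$ then, using $\inf_n w_n=0$, I choose indices $n_1<n_2<\cdots$ with $w_{n_k}\le L2^{-k-1}$; every finite $A\subseteq\{n_k\}$ has $w(A)\le\sum_k w_{n_k}\le L$, so (i) applies to the subsequence and $(e_{n_k})\sim c_0$.

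The remaining cases---all of (ii) (where $\sum_n w_n<\infty$ forces $\limsup_n w_n=0$) and the residual case $\limsup_n w_n=0$ of (iv)---are where the real difficulty lies, since (i) becomes vacuous: with $w_n\to 0$ there is no distant coordinate heavy enough to serve as the spike above. The plan here is to exploit summability in the opposite direction: when the total weight is small, a modest budget $w(\Lambda_m)$ already suffices to reconstruct all but a few of the heaviest coordinates, so $\sigma^w_{w(\Lambda_m)}(x)$ is forced to be small. Testing with equal-coefficient vectors $x=\sum_{n\in A}e_n$ whose greedy block is an initial segment of $A$, and comparing the residual $\|x-\overline{G}_m(x)\|$ (bounded below via $I-P_r$) with the cheap reconstruction, should produce democracy-type inequalities comparing a heavy block of $A$ with a light-normed set; feeding in the $c_0$-subsequence supplied by (iv) should then upgrade these to the absolute bound $\max_\pm\|\sum_{n\in A}\pm e_n\|\le C$ for every finite $A$, giving (ii). I expect the main obstacle to be precisely this last step: arranging the test vectors and the reconstruction set so that the light-normed comparison set genuinely carries enough $w$-measure while keeping its norm controlled, since for a general summable weight the heaviest single coordinate need not dominate the remaining weight.
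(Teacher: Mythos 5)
Your parts (i), (iii), and the $\limsup w_n>0$ case of (iv) are correct. Your version of (i) is in fact a nice single-spike variant of the paper's argument: you drop the lightest element $n_0$ of $A$ from the comparison set so that one far index $j$ with $w_j\ge w(A\setminus\{n_0\})$ suffices, whereas the paper uses two far indices $n_0,n_1$ with $w(A)<w_{n_0}+w_{n_1}$ and compares against all of $A$; both give the constant $2\beta\overline{K}$. However, part (ii) and the residual case $\limsup_n w_n=0$ of (iv) are left as an admitted sketch, and that is a genuine gap. Worse, your plan for closing it is circular: to prove (ii) you propose to feed in ``the $c_0$-subsequence supplied by (iv)'', but under the hypothesis of (ii) one has $w_n\to 0$, so the only case of (iv) that could apply is exactly the residual case you have not proven --- and whose natural proof goes through (ii).

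The missing idea is to reverse the roles of the spike and the set $A$: anchor the spike at a \emph{fixed} index of fixed positive weight rather than looking for a heavy index beyond $\max A$. For (ii), choose $N$ with $\sum_{n>N}w_n<w_1$; for any finite $A$ with $\min A\ge N+1$ and any signs, test with $x=(1+\varepsilon)e_1+\sum_{n\in A}\pm e_n$. Then $\Lambda_1=\{1\}$, so the weight budget is $w_1$, which exceeds $w(A)$; hence the whole set $A$ is an admissible comparison set and $\sigma^w_{w_1}(x)\le (1+\varepsilon)\|e_1\|=1+\varepsilon$. Semi-greediness with $m=1$ yields a scalar $\lambda$ with $\|\lambda e_1+\sum_{n\in A}\pm e_n\|\le\overline{K}(1+\varepsilon)$, and applying $I-P_1$ (norm $\le 2\beta$) gives $\|\sum_{n\in A}\pm e_n\|\le 2\beta\overline{K}(1+\varepsilon)$. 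Thus the tail $(e_n)_{n>N}$, and hence the whole normalized basis, is equivalent to the unit vector basis of $c_0$. This is precisely where summability helps rather than hurts: it makes all distant sets cheap, so the fixed budget $w(\Lambda_1)=w_1$ can afford every such $A$. Part (iv) then follows with no case split at all: if $\inf w_n=0$, pick a subsequence $(n_k)$ with $\sum_k w_{n_k}<\infty$ and run the same argument with the spike at $e_{n_1}$ and $A$ contained in a far tail of $\{n_k\}$, which is exactly the paper's route.
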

\begin{proof} (i) We may select $n_1>n_0 > \max A$ such that $w(A) < \delta := w_{n_0} + w_{n_1}$.  Let $\varepsilon>0$. The $w$-semi-greedy condition applied to 
$ x := \sum_{n \in A} \pm e_n + (1+ \varepsilon)(e_{n_0}+ e_{n_1})$ implies the existence of $\lambda, \mu\in \mathbb{R}$ such that
$$\|\sum_{n \in A} \pm e_n + \lambda e_{n_0}+\mu e_{n_1}\| \le \overline{K}\sigma^w_{\delta}(x).$$
Hence \begin{align*}
\|\sum_{n \in A} \pm e_n \| &\le \beta \|\sum_{n \in A} \pm e_n + \lambda e_{n_0}+ \mu e_{n_1}\|\\
&\le \beta \overline{K} \sigma^w_{\delta}(x)\\ &\le \beta \overline{K} (1+ \varepsilon) \| e_{n_0}+ e_{n_1}\|\\
\intertext{(since $w(A) \le \delta$)}& \le 2\beta \overline{K} (1+ \varepsilon). \end{align*}
Let $\varepsilon \rightarrow 0$ to conclude.

(ii) Choose  $N \in \mathbb{N}$ such that $\sum_{N+1}^\infty w_n < w_1$. Suppose $\min A \ge N+1$. The $w$-semi-greedy condition applied to 
$ x := (1+ \varepsilon) e_1 + \sum_{n \in A} \pm e_n$ implies the existence of $\lambda\in \mathbb{R}$ such that
$$\|\lambda e_1 + \sum_{n \in A} \pm e_n \| \le \overline{K}\sigma^w_{w_1}(x).$$ Hence \begin{align*}
\|\sum_{n \in A} \pm e_n \| &\le 2\beta \|\lambda e_1 + \sum_{n \in A} \pm e_n\|\\
&\le 2\beta \overline{K} \sigma^w_{w_{1}}(x)\\ &\le 2\beta \overline{K} (1+ \varepsilon) \| e_{1}\|\\
\intertext{(since $w(A) \le w_1$)}& = 2\beta \overline{K} (1+ \varepsilon). \end{align*}
Hence $(e_n)$ is equivalent to the unit vector basis of $c_0$.

(iii) By (i), $\| \sum_{n \in A} \pm e_n \| \le 2\beta \overline{K}$ for all $A$ and choices of signs. Hence $(e_n)$ is equivalent to the unit vector basis of $c_0$.

(iv) Choose $(n_k)$ such that $\sum_{k=1}^\infty w_{n_k} < \infty$. By (ii), $(e_{n_k})$ is equivalent to the unit vector basis of $c_0$.
\end{proof}

The following definition generalizes the notion of superdemocracy to weights, with the constant weight corresponding to the usual definition of superdemocracy (see \cite{KT}).

\begin{definition}A basis $(e_n)_{n=1}^\infty$ is {\bf $\boldsymbol{w}$-superdemocratic with constant $\boldsymbol{\overline{D}}$} whenever $w(A)\leq w(B)$ for $A,B\in\mathbb{N}^{<\infty}$ implies
$$\max_\pm\|\sum_{n\in A} \pm e_n\|\leq \overline{D} \min_\pm\|\sum_{n\in B} \pm e_n\|.$$
\end{definition}

Combining the fact that quasi-greedy sequences are `unconditional for constant coefficients' (Proposition~\ref{constant-unconditional}) with Theorem~\ref{mainequivalence} immediately  yields the following result.

\begin{proposition}  \label{prop: almostgreedyissuperdemocratic}  Every $w$-almost greedy basis of a real Banach space is $w$-superdemocratic. \end{proposition}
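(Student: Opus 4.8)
The plan is to reduce everything to two results already in hand and then chain a short string of inequalities. First I would invoke Theorem~\ref{mainequivalence} to conclude that a $w$-almost greedy basis is simultaneously quasi-greedy, say with constant $K$, and $w$-democratic, say with constant $D$. These are exactly the two ingredients the argument needs, and no new estimate has to be built.

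Next, fix $A,B\in\mathbb{N}^{<\infty}$ with $w(A)\le w(B)$ together with arbitrary sign choices. The key observation is that Proposition~\ref{constant-unconditional} lets me trade signed sums for the plain sum $\sum e_n$ at the cost of a factor $2K$, but that the two halves of its two-sided estimate must be used in \emph{opposite} directions. On the $A$-side I use the upper bound $\|\sum_{n\in A}\pm e_n\|\le 2K\|\sum_{n\in A}e_n\|$, which controls the maximum over signs; on the $B$-side I use the lower bound, rewritten as $\|\sum_{n\in B}e_n\|\le 2K\min_\pm\|\sum_{n\in B}\pm e_n\|$. In between, $w$-democracy supplies $\|\sum_{n\in A}e_n\|\le D\|\sum_{n\in B}e_n\|$, which is exactly where the hypothesis $w(A)\le w(B)$ is consumed.

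Concatenating the three steps then yields
$$\max_\pm\Bigl\|\sum_{n\in A}\pm e_n\Bigr\| \le 2K\Bigl\|\sum_{n\in A}e_n\Bigr\| \le 2KD\Bigl\|\sum_{n\in B}e_n\Bigr\| \le 4K^2D\,\min_\pm\Bigl\|\sum_{n\in B}\pm e_n\Bigr\|,$$
so the basis is $w$-superdemocratic with constant $\overline{D}\le 4K^2D$. Since none of the three steps depends on the particular signs, passing to the maximum on the left and the minimum on the right is legitimate, and the proof is finished.

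There is essentially no hard part here; the statement is flagged as ``immediate,'' and indeed the whole content is the recognition that the two halves of Proposition~\ref{constant-unconditional} enter in opposite senses. The only place to be careful is the bookkeeping of directions, so that the quasi-greedy constant appears as a multiplicative $2K$ at each end rather than getting inverted and landing on the wrong side of the inequality; everything else is purely formal.
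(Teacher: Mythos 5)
Your proof is correct and is exactly the paper's argument: the paper proves this proposition in one line by combining Theorem~\ref{mainequivalence} (quasi-greedy plus $w$-democratic) with Proposition~\ref{constant-unconditional}, and your write-up simply makes explicit the chain $\max_\pm\|\sum_{n\in A}\pm e_n\|\le 2K\|\sum_{n\in A}e_n\|\le 2KD\|\sum_{n\in B}e_n\|\le 4K^2D\min_\pm\|\sum_{n\in B}\pm e_n\|$ that the paper leaves implicit. The bookkeeping of which half of Proposition~\ref{constant-unconditional} is used on each side is handled correctly.
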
 
 
We show next that $w$-semi-greedy bases are also $w$-superdemocratic, which is the weighted version of \cite[Proposition 3.3]{DKK}.

\begin{theorem} Every $w$-semi-greedy basis of a real Banach space is $w$-superdemocratic. \end{theorem}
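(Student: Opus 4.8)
The plan is to prove the defining inequality $\max_\pm\|\sum_{n\in A}\pm e_n\|\le\overline{D}\min_\pm\|\sum_{n\in B}\pm e_n\|$ for all finite $A,B$ with $w(A)\le w(B)$, using only the $w$-semi-greedy hypothesis and the basis constant $\beta$; no unconditionality is available, so the argument must avoid projecting onto the given sets. First I would dispose of the case $\sum_n w_n<\infty$: by Proposition~\ref{prop: weightproperties}(ii) the basis is then equivalent to the unit vector basis of $c_0$, which is $w$-superdemocratic. So I may assume $\sum_n w_n=\infty$, equivalently that every tail $\sum_{n>N}w_n$ diverges.

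The engine of the proof is a pair of one-sided ``positional'' democracy estimates, each coming from a single clean application of $w$-semi-greediness. For disjoint finite sets $P,Q$ with $\max P<\min Q$ and arbitrary signs, I would consider $x=\sum_{n\in P}\epsilon_n e_n+(1+\delta)\sum_{n\in Q}\zeta_n e_n$. When $w(P)\le w(Q)$ the greedy set of order $|Q|$ is exactly $Q$, the competitor supported on $P$ shows $\sigma^w_{w(Q)}(x)\le(1+\delta)\|\sum_{n\in Q}\zeta_n e_n\|$, and hence $\|x-\overline{G}_{|Q|}(x)\|\le\overline{K}(1+\delta)\|\sum_{n\in Q}\zeta_n e_n\|$. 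Since $\overline{G}_{|Q|}(x)\in\operatorname{span}\{e_n:n\in Q\}$ and $P$ lies entirely to the left of $Q$, applying the initial-segment projection $P_{[1,\max P]}$ (of norm $\le\beta$) to the residual recovers exactly $\sum_{n\in P}\epsilon_n e_n$; letting $\delta\to0$ and varying signs yields $\max_\pm\|\sum_{n\in P}\pm e_n\|\le\beta\overline{K}\min_\pm\|\sum_{n\in Q}\pm e_n\|$. The symmetric construction with the levels reversed (greedy set $P$, extraction of the $Q$-part by the co-initial projection $I-P_{[1,\min Q-1]}$ of norm $\le1+\beta$) gives, when $w(Q)\le w(P)$, the estimate $\max_\pm\|\sum_{n\in Q}\pm e_n\|\le(1+\beta)\overline{K}\min_\pm\|\sum_{n\in P}\pm e_n\|$.

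With these two estimates the general case follows by routing through a far-out reference set. Given $A,B$ with $w(A)\le w(B)$, I would build $C\subset(\max(A\cup B),\infty)$ by adding tail indices until $w(C)\ge w(A)$ for the first time; writing $j$ for the last index added and $C'=C\setminus\{j\}$, minimality gives $w(C')<w(A)\le w(B)$ (with the convention that empty sums vanish if $C'=\emptyset$). The first estimate with $P=A,\,Q=C$ gives $\max_\pm\|\sum_{n\in A}\pm e_n\|\le\beta\overline{K}\min_\pm\|\sum_{n\in C}\pm e_n\|$, while the second with $P=B,\,Q=C'$ gives $\max_\pm\|\sum_{n\in C'}\pm e_n\|\le(1+\beta)\overline{K}\min_\pm\|\sum_{n\in B}\pm e_n\|$. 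Since adjoining the single vector $\pm e_j$ shows $\min_\pm\|\sum_{n\in C}\pm e_n\|\le\min_\pm\|\sum_{n\in C'}\pm e_n\|+1\le\max_\pm\|\sum_{n\in C'}\pm e_n\|+1$, and since the basis constant forces $\min_\pm\|\sum_{n\in B}\pm e_n\|\ge(2\beta)^{-1}$ (so the additive $1$ is absorbed), chaining these inequalities produces $\max_\pm\|\sum_{n\in A}\pm e_n\|\le\overline{D}\min_\pm\|\sum_{n\in B}\pm e_n\|$ with $\overline{D}$ depending only on $\beta$ and $\overline{K}$.

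The step I expect to be the main obstacle is the extraction of the norm of the ``small'' set from the Chebyshev residual: without unconditionality one cannot bound a coordinate projection onto an arbitrary set, so $w$-semi-greediness alone only controls a distance, $\operatorname{dist}(\sum_{n\in P}\epsilon_n e_n,\operatorname{span}\{e_n:n\in Q\})$, rather than the norm. The device that circumvents this is to never project onto the given sets $A,B$, but instead to place the comparison set $C$ far to the right, so that the extraction is performed by a bounded initial- or co-initial-segment projection governed by $\beta$. The secondary difficulty, that $C$ cannot in general be chosen with $w(C)$ exactly in $[w(A),w(B)]$ when these are close, is handled by the single-index removal producing $C'$ together with the uniform lower bound on $\min_\pm\|\sum_{n\in B}\pm e_n\|$; notably, this routing through a far-out $C$ makes disjointness of $A$ and $B$ unnecessary.
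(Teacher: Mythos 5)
Your proof is correct and is essentially the paper's argument: both apply the $w$-semi-greedy inequality to signed sums with a slightly boosted reference set placed far to the right of $A\cup B$, extract the relevant piece with an initial or co-initial basis projection of norm controlled by $\beta$, and then chain through that reference set, absorbing the additive error via a lower bound on $\min_\pm\|\sum_{n\in B}\pm e_n\|$. The only real difference is bookkeeping: your first-passage construction of $C$ (stop the first time $w(C)\ge w(A)$, then drop the last index to get $C'$) eliminates the paper's case split on whether $w(B)\le\limsup w_n$ and its appeal to parts (i) and (iii) of Proposition~\ref{prop: weightproperties}, whereas the paper builds $E$ and $n_0$ with $w(E)\le w(B)<w(E)+w_{n_0}$ and handles the small-$w(B)$ case separately.
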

\begin{proof} We may assume that $\sum w_n = \infty$ and $\sup w_n < \infty$, for otherwise by Proposition~\ref{prop: weightproperties} $(e_n)$ is equivalent to the unit vector basis of 
$c_0$ for which the result is obvious. Suppose $w(A) \le w(B)$ and that $B$ is nonempty.  If $w(B) \le \limsup w_n$ then by Proposition~\ref{prop: weightproperties} ,
$$\max_\pm \| \sum_{n \in A} \pm e_n \| \le 2\beta \overline{K},$$ and hence
$$\max_\pm \| \sum_{n \in A} \pm e_n \| \le 2\beta \overline{K}\min_\pm \| \sum_{n \in B} \pm e_n \|$$
as desired. Now suppose that $w(B) > \limsup w_n$. Since $\sum w_n = \infty$ we can choose $E \in \mathbb{N}^{<\infty}$ and $n_0 \in \mathbb{N}$ with $\min E > \max(A \cup B)$ and $n_0 > \max E$ such that $$w(E) \le w(B) < w(E) + w_{n_0} < 2 w(B).$$
Set $F := E \cup \{n_0\}$. Then $w(E) \le w(B) < w(F) < 2w(B)$.  Let $\varepsilon>0$. Applying the $w$-semi-greedy condition to
$$ x = (1+\varepsilon) \sum_{n \in B} \pm e_n + \sum_{n \in E} e_n $$
yields scalars $(a_n)_{n \in B}$ such that \begin{align*}
\| \sum_{n \in B} a_n e_n + \sum_{n \in E} e_n\| &\le \overline{K} \sigma^w_{w(B)}(x)\\
&\le \overline{K}(1+\varepsilon) \|\sum_{n \in B} \pm e_n\|\end{align*}
since $w(E) \le w(B)$. Hence $$
\|\sum_{n \in E} e_n \| \le 2\beta \| \sum_{n \in B} a_n e_n + \sum_{n \in E} e_n\| \le 2\beta \overline{K}(1+\varepsilon) \|\sum_{n \in B} \pm e_n\|.$$
Since $\varepsilon>0$ is arbitrary, $$\|\sum_{n \in E} e_n \| \le 2\beta \overline{K}\|\sum_{n \in B} \pm e_n\|.$$
Similarly, using the fact that $w(A)
 \le w(B) < w(F)$, the $w$-semi-greedy condition applied to $y =  \sum_{n \in A} \pm e_n + (1 + \varepsilon) \sum_{n \in F} e_n $
yields $$\|\sum_{n \in A} \pm e_n \| \le \beta \overline{K}\|\sum_{n \in F} e_n\|.$$ Finally,
\begin{align*}
\|\sum_{n \in A} \pm e_n \| &\le \beta \overline{K}\|\sum_{n \in F} e_n\|\\
&\le \beta \overline{K}(\|\sum_{n \in E} e_n\| +1)\\
&\le \beta \overline{K}(2\beta \overline{K}\|\sum_{n \in B} \pm e_n\| +1)\\
&\le (2\beta^2\overline{K}^2 + \beta \overline{K})\|\sum_{n \in B} \pm e_n\|. \end{align*}

\end{proof}
\begin{proposition} \label{prop: superdemocraticequiv} Suppose $0 < \inf w_n \le \sup w_n < \infty$. Then $(e_n)$ is $w$-superdemocratic $\Leftrightarrow$ $(e_n)$ is superdemocratic for the constant weight sequence. \end{proposition}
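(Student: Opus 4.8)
The plan is to exploit the fact that the hypothesis $0<\inf_n w_n\leq\sup_n w_n<\infty$ makes the $w$-measure and the cardinality comparable up to a fixed factor, and then to absorb that factor through a partition-plus-triangle-inequality argument. Write $a:=\inf_n w_n>0$, $b:=\sup_n w_n<\infty$, and $R:=b/a\geq1$, so that $a|A|\leq w(A)\leq b|A|$ for every $A\in\mathbb{N}^{<\infty}$. Two elementary observations drive everything: if $w(A)\leq w(B)$ then $a|A|\leq w(A)\leq w(B)\leq b|B|$, whence $|A|\leq R|B|$; and if $|A|\leq|B|$ then $w(A)\leq b|A|=R\,(a|A|)\leq R\,w(B)$. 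Thus each superdemocracy condition is really the assertion ``comparable sizes force comparable norms,'' and the only issue is the loss of the factor $R$.

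For the implication (constant-weight superdemocracy $\Rightarrow$ $w$-superdemocracy), suppose $(e_n)$ is superdemocratic with constant $\overline{D}$ and let $w(A)\leq w(B)$ with $B$ nonempty. By the first observation $|A|\leq R|B|\leq\lceil R\rceil|B|$, so $A$ splits into blocks $A_1,\dots,A_{\lceil R\rceil}$ each with $|A_i|\leq|B|$. For any choice of signs the triangle inequality gives $\|\sum_{n\in A}\pm e_n\|\leq\sum_i\|\sum_{n\in A_i}\pm e_n\|\leq\sum_i\max_\pm\|\sum_{n\in A_i}\pm e_n\|$, and applying superdemocracy to each $A_i$ (legitimate since $|A_i|\leq|B|$) bounds this by $\lceil R\rceil\,\overline{D}\,\min_\pm\|\sum_{n\in B}\pm e_n\|$. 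Taking the maximum over signs on the left yields $w$-superdemocracy with constant $\lceil R\rceil\,\overline{D}$. This direction is clean because partitioning by cardinality is automatic.

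The reverse implication ($w$-superdemocracy $\Rightarrow$ constant-weight superdemocracy) follows the same scheme but carries the genuine obstacle, which is the crux of the argument. Suppose $(e_n)$ is $w$-superdemocratic with constant $\overline{D}$ and let $|A|\leq|B|$, so $w(A)\leq R\,w(B)$ by the second observation. I would like to partition $A$ into boundedly many blocks of $w$-measure at most $w(B)$ and then argue blockwise as above. This succeeds when $w(B)$ is large, but it can fail when $w(B)$ is small: if $B$ is a single index of minimal weight while $A$ is a single index of maximal weight, then already $w(A)$ exceeds $w(B)$, so no nonempty block of $A$ has $w$-measure $\leq w(B)$. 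The remedy is to split into cases by the size of $B$. When $|B|>2R$ we have $w(B)\geq a|B|>2b$, so every single index (weight $\leq b$) has $w$-measure $<w(B)/2$; a greedy partition of $A$ then produces blocks $A_1,\dots,A_k$ with $w(A_i)\leq w(B)$, and since each block except possibly the last has $w$-measure exceeding $w(B)-b>w(B)/2$, the bound $w(A)\leq R\,w(B)$ forces $k\leq 2R+1$. Applying $w$-superdemocracy blockwise and summing (then maximizing over signs) gives the estimate with constant $(2R+1)\overline{D}$.

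It remains to treat the bounded range $|B|\leq 2R$, where I would avoid democracy entirely. Here normalization and the triangle inequality give $\max_\pm\|\sum_{n\in A}\pm e_n\|\leq|A|\leq|B|\leq 2R$, while the basis constant $\beta$ furnishes the lower bound $\min_\pm\|\sum_{n\in B}\pm e_n\|\geq 1/(2\beta)$, since $1=|e_{n_0}^*(\sum_{n\in B}\pm e_n)|\leq 2\beta\,\|\sum_{n\in B}\pm e_n\|$ for any $n_0\in B$. Combining the two ranges yields constant-weight superdemocracy with a constant depending only on $R$, $\beta$, and $\overline{D}$. The one step that requires real care is the greedy weight-partition and its block count in the large-$B$ case; the small-$B$ case and both halves of the easy direction are routine bookkeeping with the triangle inequality.
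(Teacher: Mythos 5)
Your proof is correct and follows essentially the same strategy as the paper's: the easy implication by splitting $A$ into roughly $b/a$ blocks of cardinality at most $|B|$, and the hard implication by a small/large case split in which the large case uses a greedy partition into boundedly many blocks of $w$-measure at most $w(B)$, blockwise application of superdemocracy, and the triangle inequality. The only cosmetic differences are that you prove the hard direction directly for $|A|\le|B|$ by partitioning $A$ (the paper first reduces to $|A|=|B|$ and partitions the set of larger $w$-measure), and that your small-$B$ case invokes the basis-constant lower bound $\min_\pm\|\sum_{n\in B}\pm e_n\|\ge 1/(2\beta)$, which is in fact slightly more careful than the paper's corresponding step.
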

\begin{proof} $\Rightarrow$: Let $\overline{D}$ be the $w$-superdemocracy constant of $(e_n)$. We may assume that $0 < \alpha := \inf w_n \le 1 = \sup w_n$. Suppose $|A| = |B|$ and, without loss of generality,  $w(A) \le w(B)$. Then
$$\max_\pm \| \sum_{n \in A} \pm e_n \| \le \overline{D} \min_\pm \| \sum_{n \in B} \pm e_n \|$$
So to prove superdemocracy it  suffices to show that 
$$\max_\pm \| \sum_{n \in B} \pm e_n \| \le L \min_\pm \| \sum_{n \in A} \pm e_n \|$$
for some constant $L$.
If $w(B) \le 2/\alpha$ then $|B| \le 2/\alpha^2$ and so we can take  $L = 2/\alpha^2$. Suppose $w(B) > 2/\alpha$. Note that $w(A) \ge \alpha w(B)\ge 2$.
Hence we may partition $B$ into $N$ sets $B_1,\dots, B_N$ satisfying $w(B_j) \le w(A) \le w(B_j)+1$, and hence $w(B_j) \ge w(A)/2$,
with $$ N \le \frac{w(B)}{w(A)/2} \le \frac{2}{\alpha}.$$ Since $(e_n)$ is $w$-superdemocratic and $w(B_j) \le w(A)$ ($1 \le j \le N$),
we have
\begin{align*} \max_{\pm}\| \sum_{n \in B} \pm e_n \| &\le \sum_{j=1}^N \max_{\pm}\| \sum_{n \in B_j} \pm e_n \|\\
&\le N\overline{D}  \min_{\pm}\| \sum_{n \in A} \pm e_n \|\\
&\le \frac{2\overline{D}}{\alpha} \min_{\pm}\| \sum_{n \in A} \pm e_n \|
\end{align*} Hence we can take $L = 2\overline{D}/\alpha$.

$\Leftarrow$: Let $C$ be the superdemocracy constant (for the constant weight). Suppose $w(A) \le w(B)$. Then $|A| \le |B|/\alpha$.
We can partition $A$ into fewer than $1 + 1/\alpha$ sets of size at most $|B|$. Hence by the triangle inequality
$$\max_{\pm}\| \sum_{n \in A} \pm e_n \| \le  \frac{C(1+\alpha)}{\alpha}\min_{\pm}\| \sum_{n \in B} \pm e_n \|.$$
So $(e_n)$ is $w$-superdemocratic.
\end{proof} \begin{remark} The previous result is sharp in the following sense. Suppose $w$ is a weight sequence satisfying
$\inf w_n = 0$, $\sup w_n =1$, and $\sum w_n = \infty$. Consider the following norm:
$$ \| \sum_{n=1}^\infty a_n e_n \| = \sup |a_n| \vee (\sum_{n=1}^\infty a_n^2 w_n)^{1/2}.$$
Then $(e_n)$ is a normalized basis which is $w$-superdemocratic but not superdemocratic (and hence $(e_n)$ is $w$-greedy but not greedy). \end{remark}
\begin{corollary} \label{cor: almostgreedyequivwalmostgreedy} Suppose that $(e_n)$ has no subsequence equivalent to the unit vector basis of $c_0$. Then $(e_n)$ is $w$-almost greedy
for some weight $w$ $\Leftrightarrow$ $(e_n)$ is almost greedy. \end{corollary}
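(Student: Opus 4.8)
The plan is to reduce both implications to Theorem~\ref{mainequivalence}, which characterizes (for an arbitrary weight, including the constant weight) the $w$-almost greedy bases as exactly the quasi-greedy and $w$-democratic ones. The reverse implication is immediate: taking $w=(1,1,1,\dots)$, the notions of $w$-almost greedy and almost greedy coincide by definition, so an almost greedy basis is trivially $w$-almost greedy for this choice of $w$.

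For the forward implication, suppose $(e_n)$ is $w$-almost greedy for some weight $w$. The first step is to pin down the weight. By Theorem~\ref{thm: almostgreedysemigreedy} the basis is $w$-semi-greedy, so Proposition~\ref{prop: weightproperties} applies. Since a basis equivalent to the $c_0$ unit vector basis has all of its subsequences equivalent to that basis as well, the hypothesis that $(e_n)$ has no subsequence equivalent to the unit vector basis of $c_0$ rules out each of parts (ii), (iii), and (iv) of Proposition~\ref{prop: weightproperties}. This forces $\sum_n w_n = \infty$, $\sup_n w_n < \infty$, and $\inf_n w_n > 0$; in particular $0 < \inf_n w_n \le \sup_n w_n < \infty$, which is precisely the regularity hypothesis needed to invoke Proposition~\ref{prop: superdemocraticequiv}.

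The second step transfers democracy from the weighted to the constant setting. By Proposition~\ref{prop: almostgreedyissuperdemocratic} the basis is $w$-superdemocratic, and then Proposition~\ref{prop: superdemocraticequiv} (applicable by the bounds just obtained) upgrades this to superdemocracy for the constant weight. Superdemocracy trivially implies democracy, since $\|\sum_{n\in A} e_n\| \le \max_\pm\|\sum_{n\in A}\pm e_n\|$ and $\min_\pm\|\sum_{n\in B}\pm e_n\| \le \|\sum_{n\in B} e_n\|$. Finally, the basis is quasi-greedy (this is part of the characterization in Theorem~\ref{mainequivalence}, or directly Lemma~\ref{quasi-greedy}), so being both quasi-greedy and democratic it is almost greedy by Theorem~\ref{mainequivalence} applied with the constant weight, completing the proof.

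There is no serious analytic obstacle here; the argument is an assembly of results already established. The one point requiring care is the deduction that the weight is bounded above and bounded away from zero: this is where the no-$c_0$-subsequence hypothesis is used in an essential way, and it is exactly this regularity that makes Proposition~\ref{prop: superdemocraticequiv} available. The Remark following that proposition shows that dropping the lower bound $\inf_n w_n > 0$ can indeed destroy the equivalence, so the hypothesis cannot simply be discarded.
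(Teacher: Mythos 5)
Your proof is correct and follows essentially the same route as the paper: deduce $0<\inf w_n\le\sup w_n<\infty$ from Proposition~\ref{prop: weightproperties} via the no-$c_0$-subsequence hypothesis, use Proposition~\ref{prop: almostgreedyissuperdemocratic} and Proposition~\ref{prop: superdemocraticequiv} to pass from $w$-superdemocracy to superdemocracy, and conclude with Theorem~\ref{mainequivalence}. The only difference is that you make explicit the appeal to Theorem~\ref{thm: almostgreedysemigreedy} (needed so that Proposition~\ref{prop: weightproperties}, stated for $w$-semi-greedy bases, applies), a step the paper leaves implicit.
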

\begin{proof} 
By Theorem~\ref{mainequivalence} and Proposition~\ref{prop: almostgreedyissuperdemocratic}, $(e_n)$ is $w$-almost greedy $\Leftrightarrow$ $(e_n)$ is quasi-greedy and $w$-superdemocratic. Since $(e_n)$ has no subsequence equivalent to the unit vector basis of $c_0$, by Proposition~\ref{prop: weightproperties} (parts $(iii)$ and $(iv)$), $0 < \inf w_n \le \limsup w_n < \infty$. Hence, by Proposition~\ref{prop: superdemocraticequiv},
 $(e_n)$ is $w$-almost greedy $\Leftrightarrow$ $(e_n)$ is quasi-greedy and superdemocratic $\Leftrightarrow$ $(e_n)$ is almost greedy.
\end{proof} \begin{question} \label{question: semigreedy} Does the analogous result holds for semi-greediness?
A simple characterization of $w$-semi-greediness analogous to Theorem~\ref{mainequivalence}, which could be useful for answering this question, seems to be lacking. A partial answer is given in Corollary~\ref{cor: semigreedywsemigreedyequiv} below.\end{question}
We turn now to a  converse to Theorem~\ref{thm: almostgreedysemigreedy} for spaces of finite cotype. The following lemma is needed.
\begin{lemma} \label{lem: lowerest} Suppose $(e_n)$ is $w$-semi-greedy and $0 < \inf w_n  \le \sup w_n$. There exists  $M<\infty$ such that for all
$A \in \mathbb{N}^{<\infty}$ and for all scalars $(a_n)_{n \in A}$, we have
$$\min_{n \in A} |a_n| \| \sum_{n \in A} e_n \| \le M  \| \sum_{n \in A} a_n  e_n \|.$$\end{lemma}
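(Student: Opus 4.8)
The plan is to reduce the desired lower estimate to the superdemocracy already established for $w$-semi-greedy bases, by applying the semi-greedy inequality once to a carefully chosen perturbation of $x:=\sum_{n\in A}a_ne_n$. Throughout write $\mu:=\min_{n\in A}|a_n|$ (the case $\mu=0$ being trivial), let $\beta$ be the basis constant, $\overline{K}$ the $w$-semi-greedy constant, and $\overline{D}$ the $w$-superdemocracy constant supplied by the preceding theorem (every $w$-semi-greedy basis is $w$-superdemocratic). Two elementary facts will be used: the tail projection $I-P_{N}$ has norm at most $2\beta$, and, since the basis is normalized, $\mu\le|e^*_{n_0}(x)|\le 2\beta\|x\|$ for the index $n_0\in A$ realizing the minimum.

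I would split into two cases according to the size of $w(A)$ relative to $\limsup_n w_n$. If $w(A)\le\limsup_n w_n$, then Proposition~\ref{prop: weightproperties}(i) gives $\|\sum_{n\in A}e_n\|\le 2\beta\overline{K}$ outright, and combining this with $\mu\le 2\beta\|x\|$ settles this case. (This automatically absorbs the possibility $\sup w_n=\infty$, where $\limsup_n w_n=\infty$.)

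The substantive case is $w(A)>\limsup_n w_n$, so in particular $\limsup_n w_n<\infty$ and all sufficiently large indices satisfy $w_n<w(A)$. Adding such fresh indices (all larger than $\max A$) one at a time, their partial $w$-measures increase in steps smaller than $w(A)$ and hence cross the level $w(A)$, producing a nonempty fresh set $E$ and one further fresh index $q$ with
\[ w(E)\le w(A)< w(E)+w_q=w(E\cup\{q\}). \]
Now fix $s$ with $0<s<\mu$ and apply the $w$-semi-greedy condition to $y:=x+s\sum_{n\in E}e_n$. Since every coefficient of $x$ on $A$ has modulus at least $\mu>s$ while the coefficients on $E$ all equal $s$, the greedy set of order $m:=|A|$ is exactly $A$, so $w(\Lambda_m)=w(A)$. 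On the one hand, the competitor supported on $E$ is admissible because $w(E)\le w(A)$, whence $\sigma^w_{w(A)}(y)\le\|y-s\sum_{n\in E}e_n\|=\|x\|$. On the other hand, the Chebyshev residual $y-\overline{G}_m(y)$ has the form $\sum_{n\in A}d_ne_n+s\sum_{n\in E}e_n$; applying $I-P_{\max A}$ (which annihilates the $A$-part and fixes the $E$-part) bounds it below by $\frac{s}{2\beta}\|\sum_{n\in E}e_n\|$. The semi-greedy inequality thus gives $\frac{s}{2\beta}\|\sum_{n\in E}e_n\|\le\overline{K}\|x\|$, and letting $s\uparrow\mu$ (with $E$ fixed) yields $\mu\|\sum_{n\in E}e_n\|\le 2\beta\overline{K}\|x\|$.

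Finally I would transfer this from $E$ back to $A$ by superdemocracy: since $w(A)\le w(E\cup\{q\})$,
\[ \|\sum_{n\in A}e_n\|\le\overline{D}\,\|\sum_{n\in E\cup\{q\}}e_n\|\le\overline{D}\Big(\|\sum_{n\in E}e_n\|+1\Big), \]
so multiplying by $\mu$ and using both $\mu\|\sum_{n\in E}e_n\|\le 2\beta\overline{K}\|x\|$ and $\mu\le 2\beta\|x\|$ gives $\mu\|\sum_{n\in A}e_n\|\le 2\beta\overline{D}(\overline{K}+1)\|x\|$; taking $M$ to be the larger of the two constants produced completes the argument. The main obstacle, and the genuinely new idea, is the third-paragraph step: one deliberately forces the thresholding/Chebyshev step onto the support $A$—the ``wrong'' support—so that the residual is compelled to retain the fresh flat block $s\sum_{n\in E}e_n$, which semi-greediness can penalize only in proportion to $\|x\|$; the weight bookkeeping $w(E)\le w(A)<w(E\cup\{q\})$ is then exactly what lets superdemocracy convert the resulting control of $\|\sum_{n\in E}e_n\|$ into control of $\|\sum_{n\in A}e_n\|$.
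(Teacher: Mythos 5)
Your proof is correct, and it shares the paper's central mechanism: perturb $x$ by a flat block supported on a fresh set placed beyond $\max A$ and of weight comparable to $w(A)$, apply the $w$-semi-greedy inequality (the greedy set is $A$, and the flat block itself is an admissible competitor, so $\sigma^w_{w(A)}\le\|x\|$), then hit the Chebyshev residual with the tail projection $I-P_{\max A}$ to isolate the flat block and obtain $\mu\|\sum_{n\in E}e_n\|\le 2\beta\overline{K}\|x\|$. Where you genuinely diverge is in the scaffolding around this step. The paper first invokes Proposition~\ref{prop: superdemocraticequiv} to replace $w$-superdemocracy by ordinary constant-weight superdemocracy, normalizes $\sup w_n=1$, splits on $w(A)\le 2$ versus $w(A)>2$, and transfers the estimate from the fresh set $F$ back to $A$ through cardinality bookkeeping ($|F|\ge \alpha|A|/2$) and unweighted democracy, so its final constant carries factors of $1/\alpha=1/\inf w_n$. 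You stay entirely in the weighted setting: your degenerate case is $w(A)\le\limsup w_n$, dispatched by Proposition~\ref{prop: weightproperties}(i) (which also explicitly absorbs $\sup w_n=\infty$, a possibility the paper's normalization handles only implicitly), and your transfer from $E\cup\{q\}$ back to $A$ uses $w$-superdemocracy directly via the bookkeeping $w(E)\le w(A)<w(E)+w_q$, so your constant $2\beta\overline{D}(\overline{K}+1)$ is independent of $\inf w_n$ and $\sup w_n$. A further small refinement on your side: taking the flat coefficient $s<\mu$ and letting $s\uparrow\mu$ makes the greedy set of order $|A|$ unique, sidestepping the tie ambiguity created by the paper's choice of coefficient exactly $\min_{n\in A}|a_n|$ on $F$ (the paper implicitly relies on the semi-greedy inequality holding for every admissible greedy ordering). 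In short: the paper's route emphasizes reduction to the classical unweighted notions, while yours is more intrinsic to the weighted setting and yields cleaner, weight-independent constants.
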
 
\begin{proof} We may assume $\sup w_n = 1$ and $\inf w_n = \alpha > 0$. 
By Proposition~\ref{prop: superdemocraticequiv}, $(e_n)$ is superdemocratic.
We may assume $w(A) > 2$, for otherwise $|A| \le 2/\alpha$ and
the result is clear. Choose $F$ with $\min F > \max A$ such that
$w(F) \le w(A) \le w(F) + 1$. Note that $$|F| \ge w(A) - 1 \ge \frac{w(A)}{2} \ge \frac{\alpha |A|}{2}.$$
Hence $$\| \sum_{n \in F} e_n \| \ge \frac{\alpha}{(2 +\alpha)D}\| \sum_{n \in A} e_n \|,$$ 
where $D$ is the democracy constant. Applying the semi-greedy condition to
$x = \sum_{n \in A}a_n e_n + (\min |a_n|) \sum_{n \in F} e_n$, there exist scalars $(c_n)_{n \in A}$ such that
\begin{align*}
\|\sum_{n \in A}c_n e_n + (\min |a_n|) \sum_{n \in F} e_n\| &\le \overline{K} \sigma^w_{w(A)}\\
&\le \overline{K} \| \sum_{n \in A}a_n e_n\|. \end{align*}
since $w(F) \le w(A)$. Let $\beta$ be the basis constant of $(e_n)$. Hence \begin{align*}
(\min |a_n|) \|\sum_{n \in A} e_n\| &\le (\frac{2}{\alpha} +1)D (\min |a_n|) \|\sum_{n \in F} e_n\| \\
&\le  2\beta(\frac{2}{\alpha} +1)D  \|\sum_{n \in A}c_n e_n + (\min |a_n|) \sum_{n \in F} e_n\|\\
&\le 2\beta(\frac{2}{\alpha} +1)D\overline{K}\| \sum_{n \in A}a_n e_n\|.
\end{align*}
\end{proof}

Let us recall that a Banach space $X$ has cotype $q$, where $2\le
q <\infty$, if there exists a constant $C$ such that
\begin{equation*} \label{eq: cotypeq}
(\sum_{j=1}^n\|x_j\|^q)^{\frac{1}{q}} \le C (\Ave_{\varepsilon_j=\pm1}
\|\sum_{j=1}^n\varepsilon_jx_j\|^q)^{\frac{1}{q}} \end{equation*} for
all $x_1,\ldots,x_n\in X$ and $n\in\mathbb{N}$.  The least such
 constant $C$ is called the
cotype $q$-constant $C_q(X)$. We say that $X$ has \textit{finite cotype} if $X$ has cotype $q$ for some $q<\infty$.

The following result  and its proof can be extracted from \cite[p. 76]{DKK}. It is used below in the proof of Theorem~\ref{thm: finitecotypeequiv}.
\begin{proposition} \label{prop:largekquasigreedy} Suppose that $X$ has finite cotype and that $(e_n)$ is superdemocratic. Then, for all $0<\theta <1$, there 
exists $L(\theta)< \infty$ such that for all $m \ge 1$ and for all $x = \sum_{n \in F} a_n e_n$, where $|F| = m$, and for all $n \ge \theta m$, we have $\|G_n(x)\| \le L(\theta) \|x\|$.  ($L(\theta)$ also depends on $X$ and on $(e_n)$ but we suppress this dependence.) \end{proposition}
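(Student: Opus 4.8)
The plan is to bound the greedy remainder and play it against $\|x\|$. We may assume $\theta m\le n<m$, since otherwise $G_n(x)=x$ and there is nothing to prove. Let $a^*_1\ge a^*_2\ge\cdots$ be the nonincreasing rearrangement of $(|a_k|)_{k\in F}$, put $M:=a^*_n$, and let $\Lambda$ be the support of $G_n(x)$, so that $|e^*_k(x)|\ge M$ for $k\in\Lambda$ and $|e^*_k(x)|\le M$ for $k\in F\setminus\Lambda$. Since a superdemocratic basis is unconditional for constant coefficients, its fundamental function $\varphi(j):=\sup_{|A|\le j}\max_\pm\|\sum_{k\in A}\pm e_k\|$ is finite, nondecreasing, and subadditive (hence doubling, as $\varphi(2j)\le2\varphi(j)$), and it is comparable to its lower counterpart $\tilde\varphi(j):=\inf_{|A|=j}\min_\pm\|\sum_{k\in A}\pm e_k\|$. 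The hypothesis $n\ge\theta m$ together with doubling then yields $\varphi(m-n)\le\varphi(m)\le C(\theta)\varphi(n)$.

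For the easy half I would bound the remainder using superdemocracy alone. Summation by parts applied to $x-G_n(x)=\sum_{k\in F\setminus\Lambda}e^*_k(x)e_k$, estimating each partial sum of signed basis vectors by $\varphi$ and using $|e^*_k(x)|\le M$ throughout, gives $\|x-G_n(x)\|\le M\varphi(m-n)\le C(\theta)M\varphi(n)$. Everything then reduces to the matching lower estimate $\|x\|\ge c(\theta)M\varphi(n)$; granting it, $\|G_n(x)\|\le\|x\|+\|x-G_n(x)\|\le(1+C(\theta)/c(\theta))\|x\|$, so one may take $L(\theta)=1+C(\theta)/c(\theta)$.

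The lower estimate is where finite cotype is indispensable, and it is the step I expect to be the main obstacle. The point is that $\Lambda$ carries $n\ge\theta m$ coefficients all of modulus $\ge M$. Writing $q$ for the cotype of $X$ and applying the cotype inequality to the one-dimensional pieces $(e^*_k(x)e_k)_{k\in\Lambda}$, then using Kahane's contraction principle to replace each $|e^*_k(x)|$ by the constant $M$ inside the Rademacher average, and finally superdemocracy to evaluate the resulting constant-coefficient average, produces a lower bound of the right order for $\Ave_\pm\|\sum_{k\in\Lambda}\pm e^*_k(x)e_k\|$, namely a multiple of $M\varphi(n)$. The genuinely delicate matter is to transfer this bound on the Rademacher average back to the single, coherently signed vector $x$: for a basis that is merely superdemocratic, and hence possibly conditional, the given sign pattern can be much smaller than a typical one, so one cannot simply drop the average. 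My plan is to decompose both $G_n(x)$ and the tail $x-G_n(x)$ into dyadic coefficient blocks, apply the cotype inequality across all of these disjointly supported blocks at once, and exploit that within each block the coefficients are comparable (so that superdemocracy governs them) while the $\ell_q$-mass carried by the blocks coming from $\Lambda$ is, thanks to $n\ge\theta m$, a definite proportion of the whole. Showing that finite cotype, so deployed, rules out the conditional cancellation between the large part and the tail that would otherwise let $\|G_n(x)\|/\|x\|$ blow up, and carrying out the attendant cross-scale bookkeeping uniformly in $m$, is the crux of the argument.
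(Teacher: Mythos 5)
Your preliminary reductions are correct as far as they go: the Abel-summation (or convex-hull) bound $\|x-G_n(x)\|\le M\varphi(m-n)$, the subadditivity and doubling of $\varphi$, the comparability $\varphi\le \Delta\tilde\varphi$ coming from superdemocracy, and the observation that the proposition would follow from the lower estimate $\|x\|\ge c(\theta)M\varphi(n)$. But that lower estimate is the entire content of the proposition --- it is the one and only place where the hypothesis of finite cotype can act --- and you do not prove it. What you supply instead is (a) a lower bound for the Rademacher average, $\Ave_{\pm}\|\sum_{k\in\Lambda}\pm e_k^*(x)e_k\|\ge M\varphi(n)/\Delta$, which in fact needs no cotype at all (it follows from the contraction principle plus superdemocracy alone), and (b) an announced plan (``dyadic blocks, cotype across blocks, cross-scale bookkeeping'') whose execution you yourself defer as ``the crux of the argument.'' A proof whose final paragraph names its own missing step is not a proof; this is a genuine gap, not a loose end.

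The plan also faces a structural obstruction you never address. In the cotype inequality the Rademacher average sits on the \emph{large} side, so to extract information you must bound $\Ave_{\delta}\|\sum_i \delta_i v_i\|$ \emph{above} by a multiple of $\|x\|$ for whatever disjointly supported pieces $v_i$ you choose. If the $v_i$ are coordinate blocks of $x$ (dyadic or otherwise), bounding their sign-flipped combinations by $\|x\|$ is precisely an unconditionality statement, i.e.\ essentially the thing being proved. If instead the $v_i$ are flat (constant-modulus) vectors, superdemocracy does control all their sign combinations --- but only by $M\varphi(\text{total support})$, which is the very quantity you are trying to dominate by $\|x\|$, so the estimate is circular; what this route actually yields is a growth property of the fundamental function ($\varphi(Ns)\gtrsim N^{1/q}\varphi(s)$), not a pointwise lower bound for $\|x\|$. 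Saying that cotype ``rules out the conditional cancellation'' restates the goal rather than achieving it. Be aware, too, that your target inequality $\|x\|\ge c(\theta)M\varphi(n)$ is on its face \emph{stronger} than the proposition: for quasi-greedy bases it is \cite[Lemma~2.2]{DKKT}, whose proof uses quasi-greediness at all levels, and nothing you write shows it is reachable from superdemocracy plus cotype. Finally, note that the paper gives no argument here either: the proposition and its proof are imported from \cite[p.~76]{DKK}, which is where you should look for the actual mechanism before attempting a reconstruction.
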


Now we can prove the weighted version of \cite[Theorem 3.6]{DKK}, which provides a partial converse to Theorem~\ref{thm: almostgreedysemigreedy}.
\begin{theorem} \label{thm: finitecotypeequiv} Suppose that $X$ has finite cotype and that $(e_n)$ is $w$-semi-greedy. Then $(e_n)$ is $w$-almost greedy.
\end{theorem}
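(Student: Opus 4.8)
The plan is to prove that a $w$-semi-greedy basis on a finite-cotype space is $w$-almost greedy by invoking Theorem~\ref{mainequivalence}: it suffices to show that $(e_n)$ is quasi-greedy and $w$-democratic. First I would dispose of the degenerate cases. By Proposition~\ref{prop: weightproperties}, if $\sum w_n < \infty$, or $\sup w_n = \infty$, or $\inf w_n = 0$, then $(e_n)$ is equivalent to (a subsequence equivalent to) the unit vector basis of $c_0$; in those cases $(e_n)$ is easily seen to be $w$-almost greedy directly, so we may assume $0 < \inf w_n \le \sup w_n < \infty$. Normalizing, take $\sup w_n = 1$ and $\inf w_n = \alpha > 0$.

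In this regime the preceding results give us the two ingredients of democracy essentially for free. By the theorem immediately above, every $w$-semi-greedy basis is $w$-superdemocratic, and then Proposition~\ref{prop: superdemocraticequiv} converts this to ordinary superdemocracy for the constant weight; in particular $(e_n)$ is democratic, and by Remark-type reasoning $w$-democratic (via Proposition~\ref{prop: superdemocraticequiv} again). So the only real work is to establish that $(e_n)$ is \emph{quasi-greedy}. This is where the finite-cotype hypothesis enters and where I expect the main obstacle to lie.

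To prove quasi-greediness, the strategy is to bound $\|G_m(x)\|$ by $C\|x\|$ uniformly. Fix $x$ and $m$, write $M = |e^*_{\rho(m)}(x)|$, and let $\Lambda = \Lambda_m$. The idea, following the finite-cotype argument of \cite{DKK}, is to compare $G_m(x)$ against the Chebyshev approximant. Applying the $w$-semi-greedy condition produces scalars so that $x - \overline{G}_m(x)$ is small relative to $\sigma^w_{w(\Lambda)}(x) \le \|x\|$; combined with Lemma~\ref{lem: lowerest} (the lower estimate $\min_{n\in A}|a_n|\,\|\sum_{n\in A} e_n\| \le M\|\sum_{n\in A} a_n e_n\|$ valid for $w$-semi-greedy bases with $0 < \inf w_n \le \sup w_n$) and superdemocracy, one controls the coefficients on $\Lambda$. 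The key leverage is Proposition~\ref{prop:largekquasigreedy}: since $(e_n)$ is superdemocratic and $X$ has finite cotype, greedy projections of a vector supported on a set of size $m$ onto any $\ge \theta m$ of its coordinates are uniformly bounded. Because $0 < \inf w_n \le \sup w_n < \infty$, weight-measure comparisons translate into cardinality comparisons up to the constant $\alpha$, so the relevant supports all have comparable sizes and the hypothesis $n \ge \theta m$ of Proposition~\ref{prop:largekquasigreedy} is met for a fixed $\theta = \theta(\alpha)$.

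The main obstacle will be assembling these pieces into a clean uniform bound on $\|G_m(x)\|$: the truncation-function estimate (Proposition~\ref{prop: cutoff}) is unavailable here since we do not yet know quasi-greediness, so one must instead argue directly that $G_m(x)$ is a controlled greedy projection of a vector whose norm is comparable to $\|x\|$, and then apply Proposition~\ref{prop:largekquasigreedy}. The delicate point is ensuring that the fraction $\theta$ of coordinates retained stays bounded below independently of $m$ and $x$, which is exactly where $\inf w_n = \alpha > 0$ is used. Once $\|G_m(x)\| \le C\|x\|$ is established, quasi-greediness follows, $w$-democracy is already in hand, and Theorem~\ref{mainequivalence} finishes the proof.
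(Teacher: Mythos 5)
Your reduction matches the paper's: finite cotype rules out the degenerate weight regimes, so one may assume $0<\alpha=\inf w_n\le\sup w_n=1$; then $w$-superdemocracy plus Proposition~\ref{prop: superdemocraticequiv} give (super)democracy, and Theorem~\ref{mainequivalence} reduces everything to quasi-greediness. (One small correction in that reduction: when $\inf w_n=0$, Proposition~\ref{prop: weightproperties} only gives a \emph{subsequence} equivalent to the $c_0$ basis, so your claim that $(e_n)$ is then ``easily seen to be $w$-almost greedy directly'' fails in that case; what actually disposes of cases (ii)--(iv) is that finite cotype makes them vacuous, since $X$ cannot contain a copy of $c_0$.)

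The genuine gap is the quasi-greediness step, which is the entire content of the theorem: you name the right tools (Lemma~\ref{lem: lowerest}, Proposition~\ref{prop:largekquasigreedy}, superdemocracy, the role of $\alpha$) but explicitly defer ``assembling these pieces,'' and the one concrete step you do propose --- applying the $w$-semi-greedy condition to $x$ itself and comparing $G_m(x)$ with the Chebyshev approximant $\overline{G}_m(x)$ --- does not suffice, because $\overline{G}_m(x)$ carries arbitrary coefficients on $\Lambda$, so a bound on $\|x-\overline{G}_m(x)\|$ gives no control of $\|\sum_{n\in\Lambda}e_n^*(x)e_n\|$. The paper's mechanism is different: with $A=\{\rho(1),\dots,\rho(k)\}$ it applies semi-greediness to \emph{auxiliary} vectors. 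First, choosing a fresh set $E$ beyond $\max F$ with $w(A)\le w(E)\le w(A)+1$, it applies the condition to $y=\sum_{n\in F\setminus A}a_ne_n+(|a_{\rho(k)}|+\varepsilon)\sum_{n\in E}e_n$; since $E$ is the greedy set of $y$, projecting $E$ away with the basis constant yields $\|G_k(x)\|\le 1+\beta\overline{K}\bigl(1+|a_{\rho(k)}|\,\|\sum_{n\in E}e_n\|\bigr)$. Second, to bound $|a_{\rho(k)}|\,\|\sum_{n\in E}e_n\|$, it applies the condition to $z=x-\sum_{n\in A}a_ne_n$ with the next greedy block $B=\{\rho(k+1),\dots,\rho(k+l)\}$, obtaining coefficients $\overline{c}_n$ with $\|\sum_{n\in A}a_ne_n+\sum_{n\in B}\overline{c}_ne_n\|\le 1+\overline{K}$; the subset $\overline{B}=\{n\in B:|\overline{c}_n|\ge|a_{\rho(k)}|\}$ makes $\sum_{n\in A}a_ne_n+\sum_{n\in\overline{B}}\overline{c}_ne_n$ a greedy projection $G_s$ of that vector with $s$ a definite fraction of its support (this also requires the case split $k+l>m$, handled directly by Proposition~\ref{prop:largekquasigreedy}, versus $k+l\le m$), so Proposition~\ref{prop:largekquasigreedy} bounds its norm, Lemma~\ref{lem: lowerest} then bounds $|a_{\rho(k)}|\,\|\sum_{n\in A\cup\overline{B}}e_n\|$, and superdemocracy converts $\sum_{n\in A\cup\overline{B}}e_n$ into $\sum_{n\in E}e_n$. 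None of this construction --- the flat tail on $E$, the second application to $z$, the set $\overline{B}$, the identification with $G_s$ --- appears in your plan, and without it the ingredients you list do not combine into a uniform bound on $\|G_k(x)\|$.
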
 \begin{proof} Since $X$ has finite cotype, no subsequence of $(e_n)$ is equivalent to the unit vector basis of $c_0$. Hence we may assume that $\sup w_n = 1$ and $\inf w_n = \alpha>0$.
By Proposition~\ref{prop: superdemocraticequiv}, $(e_n)$ is both $w$-superdemocatic and superdemocratic. Hence,
 by Theorem~\ref{mainequivalence}, it suffices to show that $(e_n)$ is quasi-greedy, i.e., that there exists $K < \infty$ such that  for all $k \ge 1$ and $x \in X$,  $\|G_k(x)\| \le K \|x\|$.

Fix $m \ge 1$. Suppose that $x = \sum_{n \in F} a_n e_n$ with $\|x\| =1$, where $|F| = m$ and $a_n \ne 0$ for $n \in F$. Fix $k \ge 1$ and let $A := \{\rho(1),\dots, \rho(k)\}$. Choose $E \in \mathbb{N}^{<\infty}$
such that $\min E > \max F$ and $w(A) \le w(E) \le w(A) +1$. This implies that
\begin{equation} \label{eq: cardE} \alpha k = \alpha |A| \le |E|\le \frac{|A| + 1}{\alpha} = \frac{k+1}{\alpha} \le \frac{2k}{\alpha}.
\end{equation}

 Now let $B:= \{\rho(k+1),\dots, \rho(k+ l))\}$, where $w(A) \le w(B) \le w(A) + 1$, so that 
 $$ \alpha k = \alpha |A| \le |B|= l\le \frac{|A| + 1}{\alpha} = \frac{k+1}{\alpha} \le \frac{2k}{\alpha}.$$

If $k+l > m$, then $k >  \alpha m /(2 + \alpha)$, and hence by Proposition~\ref{prop:largekquasigreedy}
$$\|G_k(x)\| \le L(\alpha/(2+\alpha)) \|x\|.$$
So we may assume $k + l \le m$.

Fix $\varepsilon>0$ and consider
$$ y := \sum_{n \in F\setminus A} a_ne_n + (|a_{\rho(k)}|+\varepsilon)(\sum_{n \in E} e_n).$$
Since $w(A) \le w(E)$, we have
\begin{equation} \label{eq: sigmaky}
 \sigma^w_{w(E)}(y) \le \|x+(|a_{\rho(k)}|+\varepsilon)(\sum_{n \in E} e_n)\| \le 1
+(|a_{\rho(k)}|+\varepsilon)\|\sum_{n \in E} e_n\|. \end{equation}
Since $(e_i)$ is semi-greedy there exist scalars $(c_n)$ ($n \in E$) such that
\begin{equation} \label{eq: sigmaky2}
\|\sum_{n \in F\setminus A} a_ne_n + \sum_{n \in E} c_ne_n\| \le \overline{K}\sigma^w_{w(E)}(y).
\end{equation}
Since $\max F<\min E$ and $\varepsilon>0$ is arbitrary,
\eqref{eq: sigmaky} and \eqref{eq: sigmaky2} yield
$$\|\sum_{n \in F\setminus A} a_ne_n\| \le \beta \overline{K}(1+ |a_{\rho(k)}|\|\sum_{n \in E} e_n\|)$$
where $\beta$ is the basis constant. Hence
\begin{equation} \label{eq: sumoverA}
\|G_k(x)\| = \| x - \sum_{n \in F\setminus A} a_ne_n \| \le 1+  \beta \overline{K}(1+ |a_{\rho(k)}|\|\sum_{n \in E} e_n\|). \end{equation}
Now consider
$$ z:= x - \sum_{n \in A} a_n e_n.$$ Then $\|\sigma^w_{w(B)}(z)\| \le \|x\|=1$, since $w(A) \le w(B)$. Since $(e_n)$ is semi-greedy there exist 
scalars $(\overline{c}_n)$ ($n \in B$) such that
$$\|z - \sum_{n \in B} \overline{c}_n e_n \| \le \overline{K} \|\sigma^w_{w(B)}(z)\| \le \overline{K}.$$
Hence \begin{equation} \label{eq: AandB}
\|\sum_{n \in A} a_ne_n + \sum_{n \in B} \overline{c}_ne_n\| =
 \|x - (z - \sum_{n \in B}\overline{c}_ne_n)\| \le  \|x\| + \overline{K} = 1+ \overline{K}. \end{equation}
Let $\overline{B} := \{n \in B \colon |\overline{c}_n|\ge |a_{\rho(k)}|\}$.
Then
$$ \sum_{n \in A} a_n e_n + \sum_{n \in \overline{B}} \overline{c}_ne_n
= G_s(\sum_{n \in A} a_n e_n + \sum_{n \in B} \overline{c}_ne_n)$$
for some $k \le s \le k+l \le (1 + 2/\alpha)k$. Hence  Proposition~\ref{prop:largekquasigreedy}
and \eqref{eq: AandB} yield
\begin{equation} \label{eq: AandEestimate}
\|\sum_{n \in A} a_n e_n + \sum_{n \in \overline{B}} \overline{c}_ne_n\|
\le L(\alpha/(\alpha+2))(1+\overline{K})\end{equation} On the other
hand, by Lemma~\ref{lem: lowerest} there exists $M < \infty$, depending only on $(e_n)$, such that \begin{equation}
\label{eq: fromremark}
|a_{\rho(k)}| \| \sum_{n \in A \cup \overline{B}} e_n \| \le M\|\sum_{n \in A} a_n e_n + \sum_{n \in \overline{B}} c_n e_n\|.
\end{equation} From \eqref{eq: cardE}, we obtain
$$ |E| \le \frac{2}{\alpha} k \le \frac{2}{\alpha} |A \cup \overline{B}|,$$
and hence, since $(e_n)$ is superdemocratic (with constant $D$, say),
\begin{equation} \label{eq: superdemestimate}\| \sum_{n \in E}e_n \| \le D(\frac{2}{\alpha} +1)\| \sum_{n \in A \cup \overline{B}} e_n\|.
\end{equation}
Finally, combining \eqref{eq: sumoverA}, \eqref{eq: AandEestimate},
 \eqref{eq: fromremark}, and \eqref{eq: superdemestimate} we get $$\|G_k(x)\| \le 1+\beta \overline{K}+ \beta \overline{K}D(\frac{2}{\alpha}+1)M 
L(\frac{\alpha}{\alpha+2})(1+\overline{K}).$$
\end{proof} Combining Corollary~\ref{cor: almostgreedyequivwalmostgreedy} and Theorem~\ref{thm: finitecotypeequiv} we get a partial  answer to Question~\ref{question: semigreedy}.
\begin{corollary} \label{cor: semigreedywsemigreedyequiv} Suppose that $X$ has finite cotype. Then $(e_n)$ is $w$-semi-greedy for some weight $w$ $\Leftrightarrow$ $(e_n)$ is semi-greedy. \end{corollary}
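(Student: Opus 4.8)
The plan is to deduce this corollary purely by chaining together equivalences already established in the paper, so that no genuinely new computation is required; the only content is bookkeeping about which weight to use in each direction.

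For the implication ($\Leftarrow$), I would simply observe that ordinary semi-greediness \emph{is} $w$-semi-greediness for the constant weight $w=(1,1,1,\dots)$. Indeed, for this weight $w(A)=|A|$ for every $A\in\mathbb{N}^{<\infty}$, so that $w(\Lambda_m)=m$ and $\sigma^w_{w(\Lambda_m)}(x)=\sigma_m(x)$ (allowing $|A|\le m$ in place of $|A|=m$ changes nothing, since the surplus coefficients may be taken to be zero). Hence the $w$-semi-greedy inequality for the constant weight coincides verbatim with the semi-greedy inequality, and so a semi-greedy basis is $w$-semi-greedy for this particular $w$.

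For the implication ($\Rightarrow$), suppose $(e_n)$ is $w$-semi-greedy for some weight $w$. First I would record that finite cotype of $X$ forces no subsequence of $(e_n)$ to be equivalent to the unit vector basis of $c_0$, exactly as noted at the start of the proof of Theorem~\ref{thm: finitecotypeequiv}. Then Theorem~\ref{thm: finitecotypeequiv} applies directly and yields that $(e_n)$ is $w$-almost greedy. Next, since $(e_n)$ has no such $c_0$-subsequence, Corollary~\ref{cor: almostgreedyequivwalmostgreedy} converts $w$-almost greediness into ordinary almost greediness. Finally, an almost greedy basis is semi-greedy: this is the constant-weight instance of Theorem~\ref{thm: almostgreedysemigreedy} (equivalently the original \cite[Theorem 3.2]{DKK}), so $(e_n)$ is semi-greedy, completing the argument.

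There is no serious obstacle here, as each link in the chain is a result proved earlier; the only point demanding care is the identification in the ($\Leftarrow$) direction that the constant weight reproduces exactly the classical definitions of $\sigma_m$ and of semi-greediness. The hypothesis that $X$ has finite cotype is used in precisely two places: to invoke Theorem~\ref{thm: finitecotypeequiv}, and to guarantee the absence of a $c_0$-subsequence so that Corollary~\ref{cor: almostgreedyequivwalmostgreedy} becomes available.
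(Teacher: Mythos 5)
Your proposal is correct and is essentially the paper's own argument: the paper proves this corollary simply by combining Theorem~\ref{thm: finitecotypeequiv} with Corollary~\ref{cor: almostgreedyequivwalmostgreedy} (plus the standard fact that almost greedy implies semi-greedy, i.e.\ the constant-weight case of Theorem~\ref{thm: almostgreedysemigreedy}), exactly the chain you describe. Your explicit verification that the constant weight recovers the classical notions of $\sigma_m$ and semi-greediness is a worthwhile detail the paper leaves implicit.
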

\section{$w$-almost greedy bases when $w\in c_0$}
\begin{proposition}
\label{bounded}Assume $w\in c_0$ and is nonincreasing.  Let $(e_n)_{n=1}^\infty$ be a normalized $D$-$w$-democratic basis for a (real or complex) Banach space $X$.  Then for every $m\in\mathbb{N}$ there exists $N\in\mathbb{N}$ so that if $A\in\mathbb{N}^m$ with $N\leq\min A$ then
\begin{equation}\label{2}\|\sum_{n\in A}e_n\|\leq D.\end{equation}
If furthermore $w\in\ell_1$, we can choose $M\in\mathbb{N}$ such that \eqref{2} holds for all $A\in\mathbb{N}^{<\infty}$ with $M\leq\min A$.\end{proposition}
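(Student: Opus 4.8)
The plan is to exploit $D$-$w$-democracy by comparing $\sum_{n\in A}e_n$ against a single-vector benchmark of comparable or larger $w$-measure. The key observation is that a normalized basis vector $e_j$ satisfies $\|e_j\|=1$, so if I can find an index $j$ with $w(\{j\})=w_j\geq w(A)$, then $D$-$w$-democracy gives $\|\sum_{n\in A}e_n\|\leq D\|e_j\|=D$ directly. The difficulty is that when $w\in c_0$ the weights tend to $0$, so a single later weight $w_j$ cannot dominate the sum of $m$ weights $w(A)$. The resolution is to instead push the support $A$ far enough to the right that $w(A)$ itself becomes small, and then compare against a fixed early benchmark.

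First I would fix $m$ and choose the benchmark to be the set $\{1\}$, or more robustly any singleton $\{n_0\}$ of maximal weight. Since $w$ is nonincreasing, $w_1=\max_n w_n$. Because $w\in c_0$, given $m$ I can pick $N$ so large that $w_N\leq w_1/m$; as $w$ is nonincreasing this forces $w_n\leq w_1/m$ for all $n\geq N$. Then for any $A\in\mathbb{N}^m$ with $\min A\geq N$ I get
\begin{equation*}
w(A)=\sum_{n\in A}w_n\leq m\cdot\frac{w_1}{m}=w_1=w(\{1\}).
\end{equation*}
Now $D$-$w$-democracy applied to $A$ and $B=\{1\}$, using $w(A)\leq w(B)$ and $\|e_1\|=1$, yields
\begin{equation*}
\|\sum_{n\in A}e_n\|\leq D\|e_1\|=D,
\end{equation*}
which is exactly \eqref{2}. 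Note this argument needs only that the basis is normalized and $D$-$w$-democratic, and works over real or complex scalars, matching the hypotheses.

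For the second, stronger assertion under $w\in\ell_1$, the point is to make the estimate uniform over \emph{all} finite $A$ rather than only those of a fixed cardinality $m$. Here I cannot bound $w(A)$ termwise, since $A$ may be arbitrarily large, but summability comes to the rescue: I choose $M$ so that the entire tail satisfies $\sum_{n\geq M}w_n\leq w_1$. Then for any finite $A$ with $\min A\geq M$, however large, I have
\begin{equation*}
w(A)=\sum_{n\in A}w_n\leq\sum_{n\geq M}w_n\leq w_1=w(\{1\}),
\end{equation*}
and the same application of $D$-$w$-democracy against $\{1\}$ gives $\|\sum_{n\in A}e_n\|\leq D$. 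I expect the only genuine subtlety to be the bookkeeping choice of threshold (ensuring $w$-measures stay below $w_1$ in each regime), which the monotonicity and the $c_0$ (respectively $\ell_1$) hypothesis handle cleanly; there is no real analytic obstacle.
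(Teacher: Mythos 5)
Your proposal is correct and follows essentially the same route as the paper's proof: both push $A$ far enough right that $w(A)\leq w_1=w(\{1\})$ (via the $c_0$ hypothesis with monotonicity for fixed cardinality $m$, and via the $\ell_1$ tail for arbitrary finite $A$), then apply $D$-$w$-democracy against the singleton benchmark $\{1\}$. The only cosmetic difference is that you bound $w(A)$ termwise by $w_1/m$ while the paper bounds it by the block sum $\sum_{i=N}^{N+m-1}w_i\to 0$; these are interchangeable.
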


\begin{proof}Observe that
$$\lim_{k\to\infty}\sum_{i=k}^{k+m-1}w_i=0,$$
so that we can find $N\in\mathbb{N}$ satisfying
$$w(A)=\sum_{i\in A}w_i\leq\sum_{i=N}^{N+m-1}w_i<w_1=w(\{1\})$$
for all $A\in\mathbb{N}^m$ satisfying $N\leq\min A$.  By $D$-$w$-democracy this means
$$\|\sum_{n\in A}e_n\|\leq D\|x_1\|=D.$$
This proves the first part of the proposition.

Next, assume $w\in\ell_1$, and let $M\in\mathbb{N}$ be such that
$$\sum_{i=M}^\infty w_i<w_1$$
and hence $w(A)<w(\{1\})$ for all $A\in\mathbb{N}^{<\infty}$ with $M\leq\min A$.  Then we have \eqref{2} as before.\end{proof}

\begin{remark}\label{nonincreasing}It is easy to see from the proof of Proposition \ref{bounded} that if $w\in c_0$ is not assumed to be nonincreasing, we can still find a constant $D$ such that for any $m\in\mathbb{N}$ there is a positive integer $N\in\mathbb{N}$ satisfying the property that if $k\geq N$ then there exists $A\in\mathbb{N}^m$ with $k\leq A$ and $\|\sum_{n\in A}e_n\|\leq D$.\end{remark}

\begin{corollary}Let $w\in c_0$.  If $(e_n)_{n=1}^\infty$ is a basis for a (real or complex) Banach space which is both $w$-democratic and spreading then it is equivalent to the canonical basis for $c_0$.\end{corollary}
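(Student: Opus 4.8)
The plan is to combine the hypothesis that $(e_n)$ is \emph{spreading} (meaning that for every choice of indices $n_1 < n_2 < \cdots < n_k$, the sequence $(e_{n_j})_{j=1}^k$ is isometrically—or at least uniformly—equivalent to $(e_j)_{j=1}^k$, independent of the choice of indices) with the uniform upper estimate furnished by Proposition~\ref{bounded} and Remark~\ref{nonincreasing}. Since $(e_n)$ is equivalent to the canonical basis of $c_0$ precisely when the norms $\|\sum_{n \in A} \pm e_n\|$ are uniformly bounded above (the lower bound $\|\sum_{n \in A}\pm e_n\| \ge \|e_n\| = 1$ being automatic from normalization, and sign-unconditionality following from the fact that $w$-democracy forces, via the earlier machinery, a uniform equivalence of all sign choices), the entire task reduces to producing a single constant $D$ with $\|\sum_{n \in A} e_n\| \le D$ for \emph{all} finite $A$, not merely for $A$ with large minimum.

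First I would invoke Remark~\ref{nonincreasing}: there is a constant $D$ so that for each $m$ there is an integer $N(m)$ with the property that whenever $k \ge N(m)$ one can find some $A_0 \in \mathbb{N}^m$ with $k \le \min A_0$ and $\|\sum_{n \in A_0} e_n\| \le D$. Now fix an arbitrary $A \in \mathbb{N}^m$. Using the spreading property, I would transport the good estimate enjoyed by the far-out set $A_0$ back to $A$: since $(e_{n})_{n \in A}$ and $(e_{n})_{n \in A_0}$ are both increasing enumerations of $m$ indices, spreading gives a uniform constant $\lambda \ge 1$ with
$$\|\sum_{n \in A} e_n\| \le \lambda \|\sum_{n \in A_0} e_n\| \le \lambda D.$$
Because $\lambda$ and $D$ are independent of $m$ and of $A$, this yields a uniform bound $\|\sum_{n \in A} e_n\| \le \lambda D$ valid for every finite $A$.

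Having secured the uniform upper bound for the all-plus sums, I would then pass to arbitrary signs. The spreading hypothesis does not by itself control sign changes, so here I would use that $w$-democracy together with the structure already developed gives, after translating through an equivalence with the constant-weight case where applicable, control of $\max_\pm \|\sum_{n \in A} \pm e_n\|$ by $\|\sum_{n \in A} e_n\|$ up to a constant; alternatively, spreading plus unconditionality-for-constant-coefficients can be bootstrapped directly. Combining these, $\max_\pm \|\sum_{n \in A}\pm e_n\|$ is uniformly bounded, which is exactly the $c_0$ upper $\ell_\infty$-estimate. Together with the trivial lower estimate this establishes equivalence to the canonical $c_0$ basis.

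The main obstacle I anticipate is the \emph{sign} step rather than the magnitude step: the spreading property is a statement about monotone rearrangement of index \emph{positions} and transfers the all-positive estimate cleanly, but it says nothing a priori about alternating signs, so one must be careful to extract sign-unconditionality either from the democracy hypothesis (which may require knowing the basis is also quasi-greedy, or invoking superdemocracy-type bounds) or from an independent argument. The cleanest route is likely to argue that a spreading, $w$-democratic basis with uniformly bounded all-plus sums must in fact be suppression unconditional with a uniform constant, after which the $c_0$ conclusion is immediate.
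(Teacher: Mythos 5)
Your first step is sound and genuinely different from the paper's route: you use spreading to transport the bound of Proposition~\ref{bounded}/Remark~\ref{nonincreasing} from far-out sets to \emph{all} finite sets, getting $\|\sum_{n\in A}e_n\|\le \lambda D$ for every $A\in\mathbb{N}^{<\infty}$. (The paper never does this transfer; it keeps the bound on arbitrarily large far-out sets and deduces \emph{unconditionality} from the dichotomy in \cite{FOSZ16} --- every conditional spreading basis dominates the summing basis, so conditionality would force $|A|\le C'\|\sum_{n\in A}e_n\|\le CC'$ for those sets, impossible when $|A|$ is large --- and only then combines unconditionality with the bound and spreading.) The genuine gap in your proposal is the sign step, and \emph{both} of the routes you offer for it fail. $w$-democracy is a statement about all-plus constant-coefficient sums only; it gives no control of signs, and the ``earlier machinery'' that upgrades constant-coefficient bounds to sign-invariant ones (Proposition~\ref{constant-unconditional}) requires quasi-greediness, which is not a hypothesis of this corollary. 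Your alternative --- ``argue that the basis must be suppression unconditional'' --- is precisely the assertion that needs proof; you give no argument, and this is exactly the hole the paper plugs with the citation to \cite{FOSZ16}.

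The irony is that your approach, once the transfer to all finite sets is made, needs none of that machinery: for any signs write $\sum_{n\in A}\pm e_n=\sum_{n\in A^+}e_n-\sum_{n\in A^-}e_n$, so the triangle inequality already gives $\max_\pm\|\sum_{n\in A}\pm e_n\|\le 2\lambda D$. Then convexity (the cube $[-1,1]^A$ is the convex hull of $\{\pm1\}^A$) yields $\|\sum_{n\in A}a_ne_n\|\le 2\lambda D\,\sup_n|a_n|$ for real scalars (an extra factor of $2$ covers the complex case), while the lower estimate $\sup_n|a_n|\le 2\beta\,\|\sum_{n\in A}a_ne_n\|$ follows from differences of basis projections (spreading makes the basis semi-normalized). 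So your decomposition can be completed into a correct proof that is arguably more elementary than the paper's, since it avoids the appeal to \cite{FOSZ16}; but as written, the proposal stalls at the sign step and the two fixes you propose are respectively false (democracy controlling signs) and unproved (bootstrapped unconditionality). The missing idea is the trivial positive/negative splitting, not any democracy or unconditionality machinery.
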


\begin{proof}Assume $(e_n)_{n=1}^\infty$ is both $w$-democratic and spreading.  By Proposition \ref{bounded} and Remark \ref{nonincreasing} there is a constant $C$ such that $$\|\sum_{n\in A}e_n\|\leq C$$ for arbitrarily large $A\in\mathbb{N}^{<\infty}$.

We claim that $(e_n)_{n=1}^\infty$ is unconditional.  Indeed, every conditional spreading basis dominates the summing basis for $c_0$ by \cite[p4]{FOSZ16}.  Let $C'$ be the domination constant.  Then we would have
$$|A|\leq C'\|\sum_{n\in A}e_n\|\leq CC',$$
which is impossible for sufficiently large $A$.  This proves the claim.

Let $U$ be the unconditional constant for $(e_n)_{n=1}^\infty$, and select any $(a_n)_{n=1}^\infty$ with support in $A$.   Then
$$\frac{1}{2\beta M}\sup_{n\in A}|a_n|\leq\|\sum_{n\in A}a_ne_n\|\leq UC\sup_{n\in A}|a_n|,$$
where $M=\sup_{n\in\mathbb{N}}\|e_n\|$.  By the spreading property, this means $(e_n)_{n=1}^\infty$ is equivalent to the canonical basis for $c_0$.\end{proof}

\begin{definition}A basis $(e_n)_{n=1}^\infty$ is {\bf conservative with constant $\boldsymbol{C}$} whenever
$$\|\sum_{n\in A}e_n\|\leq C\|\sum_{n\in B}e_n\|$$
for all $A,B\in\mathbb{N}^{<\infty}$ satisfying $|A|\leq\ |B|$ and $A<B$.\end{definition}

It is known (see \cite[Theorem 3.4]{DKKT}) that a basis is partially-greedy if and only if it is both quasi-greedy and conservative.

\begin{proposition}\label{conservative-to-democratic}Assume $w\in c_0$.  If a normalized basis $(e_n)_{n=1}^\infty$ is both $C$-conservative and $D$-$w$-democratic then it is $2CD\beta$-democratic, where $\beta$ is the basis constant for $(e_n)_{n=1}^\infty$.  Furthermore,
$$\|\sum_{n\in A}e_n\|\leq CD\;\;\;\text{ for all }A\in\mathbb{N}^{<\infty}.$$\end{proposition}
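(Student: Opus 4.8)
The plan is to prove the two assertions in the natural order: first the uniform upper bound $\|\sum_{n\in A}e_n\|\le CD$ for every finite $A$, and then classical democracy, which follows by pairing this upper bound with a uniform \emph{lower} bound on the fundamental function.

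For the ``furthermore'' part, fix $A\in\mathbb{N}^{<\infty}$ with $|A|=m$. Since $w\in c_0$ (but not necessarily nonincreasing), I would invoke Remark~\ref{nonincreasing}: with the $w$-democracy constant $D$ playing the role of the constant there, there is an integer $N$ so that for every $k\ge N$ one can find $B\in\mathbb{N}^m$ with $k\le\min B$ and $\|\sum_{n\in B}e_n\|\le D$. Choosing $k>\max(\max A,\,N)$ produces such a $B$ with $A<B$ while keeping $|A|=|B|=m$. Then $C$-conservativeness applies (its hypotheses $|A|\le|B|$ and $A<B$ both hold), giving $\|\sum_{n\in A}e_n\|\le C\|\sum_{n\in B}e_n\|\le CD$, which is exactly the desired uniform bound.

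For democracy, I would use the elementary fact that for a basis with basis constant $\beta$ the biorthogonal functionals satisfy $\|e_j^*\|\le 2\beta$, since $e_j^*(x)e_j$ is the difference of two partial-sum projections of $x$ and the basis is normalized. Hence for any nonempty $B$ and any $j\in B$,
$$1=|e_j^*(\textstyle\sum_{n\in B}e_n)|\le 2\beta\,\|\textstyle\sum_{n\in B}e_n\|,$$
so $\|\sum_{n\in B}e_n\|\ge 1/(2\beta)$. Now if $|A|\le|B|$, the uniform bound just proved gives
$$\|\textstyle\sum_{n\in A}e_n\|\le CD=2\beta CD\cdot\tfrac{1}{2\beta}\le 2\beta CD\,\|\textstyle\sum_{n\in B}e_n\|,$$
which is precisely $2CD\beta$-democracy.

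The only point requiring care is the application of Remark~\ref{nonincreasing}: one must use it with the \emph{exact} cardinality $m=|A|$ so that the comparison set $B$ has the same size as $A$, and one must also ensure $A<B$ by taking $\min B$ large enough, which the remark permits. Beyond this bookkeeping, no genuine obstacle arises, as both the conservativeness step and the lower bound from the coordinate functionals are immediate.
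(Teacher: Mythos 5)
Your proposal is correct and follows essentially the same route as the paper: both use Remark~\ref{nonincreasing} (via Proposition~\ref{bounded}) to produce a far-out comparison set of small $w$-measure, apply $C$-conservativeness to obtain the uniform bound $CD$, and then combine this with the lower bound $\|\sum_{n\in B}e_n\|\geq 1/(2\beta)$ coming from differences of partial-sum projections. The only cosmetic difference is bookkeeping: you take the comparison set of cardinality exactly $|A|$ and prove the uniform bound first, whereas the paper chooses $\Gamma$ with $|B|\leq|\Gamma|$ and $A\cup B<\Gamma$ inside the democracy argument.
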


\begin{proof}Let $A,B\in\mathbb{N}^{<\infty}$ with $|A|\leq|B|$.  By Proposition \ref{bounded} and Remark \ref{nonincreasing} we can find $\Gamma\in\mathbb{N}^{<\infty}$ with $|B|\leq |\Gamma|$ and $A\cup B<\Gamma$, and satisfying
$$\|\sum_{n\in\Gamma}e_n\|\leq D.$$
Note that $|A|\leq |\Gamma|$ and $A<\Gamma$ so that by $C$-conservativeness we now have
$$\|\sum_{n\in A}e_n\|\leq C\|\sum_{n\in\Gamma}e_n\|\leq CD.$$
By taking the difference of partial sum projections we also have
$$CD\leq 2CD\beta\|\sum_{n\in B}e_n\|.$$\end{proof}

\begin{theorem}\label{partially-greedy-and-w-democratic}Assume $w\in c_0$.  A quasi-greedy basis $(e_n)_{n=1}^\infty$ for a real Banach space is both conservative and $w$-democratic if and only if it is equivalent to the canonical basis for $c_0$.\end{theorem}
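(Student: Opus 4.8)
The plan is to treat the two implications separately, with the forward (``only if'') direction carrying essentially all of the content and the reverse direction being routine.

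For the nontrivial direction, assume $(e_n)$ is $K$-quasi-greedy, $C$-conservative, and $D$-$w$-democratic with $w \in c_0$. The crucial first step is to invoke Proposition~\ref{conservative-to-democratic}, which uses precisely the hypothesis $w \in c_0$ to upgrade conservativeness together with $w$-democracy into a uniform bound on the fundamental function:
$$\|\sum_{n \in A} e_n\| \le CD \quad \text{for all } A \in \mathbb{N}^{<\infty}.$$
Once this bound is in hand, I would establish $c_0$-equivalence by a two-sided estimate analogous to the one used for spreading bases above. For the upper estimate I apply the second part of Proposition~\ref{constant-unconditional}: for any finitely supported $(a_n)$ with support $A$,
$$\|\sum_{n \in A} a_n e_n\| \le 2K \|\sum_{n \in A} e_n\| \cdot \|(a_n)_{n\in A}\|_\infty \le 2KCD\,\|(a_n)\|_\infty.$$
For the lower estimate I use that $(e_n)$ is a normalized basis with basis constant $\beta$, so the coordinate functionals satisfy $\|e_n^*\| \le 2\beta$ and hence $\|(a_n)\|_\infty \le 2\beta \|\sum_{n\in A} a_n e_n\|$. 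Together these give
$$\frac{1}{2\beta}\|(a_n)\|_\infty \le \|\sum_{n \in A} a_n e_n\| \le 2KCD\,\|(a_n)\|_\infty,$$
which is exactly equivalence of $(e_n)$ to the canonical basis of $c_0$.

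For the reverse direction, suppose $(e_n)$ is equivalent to the $c_0$ basis, with constants satisfying $c_1 \|(a_n)\|_\infty \le \|\sum a_n e_n\| \le c_2 \|(a_n)\|_\infty$. Then $(e_n)$ is unconditional, hence quasi-greedy. Moreover $c_1 \le \|\sum_{n\in A} e_n\| \le c_2$ for every nonempty finite $A$, so $\|\sum_{n\in A}e_n\| \le (c_2/c_1)\|\sum_{n\in B}e_n\|$ whenever both sets are nonempty; this yields conservativeness and $w$-democracy simultaneously (with constant $c_2/c_1$), for every weight $w$, and in particular for $w \in c_0$.

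The only genuinely delicate point is the role of the hypothesis $w \in c_0$, which is confined entirely to Proposition~\ref{conservative-to-democratic}: it is what forces arbitrarily far-out finite blocks to have small $w$-measure, hence uniformly bounded norm via $w$-democracy, thereby collapsing the fundamental function to a constant. Everything after that collapse is bookkeeping with the quasi-greedy constant (through Proposition~\ref{constant-unconditional}) and the basis constant, so I expect no further obstacle.
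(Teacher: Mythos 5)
Your proposal is correct and follows essentially the same route as the paper: both use the ``furthermore'' part of Proposition~\ref{conservative-to-democratic} to bound the fundamental function uniformly, then combine the second part of Proposition~\ref{constant-unconditional} with the basis-projection lower bound $\|(a_n)\|_\infty \le 2\beta\|\sum_{n\in A}a_ne_n\|$ to get two-sided $c_0$-estimates. Your explicit treatment of the converse (which the paper dismisses as trivial) is also fine, since a uniformly bounded-above-and-below fundamental function yields conservativeness and $w$-democracy for every weight at once.
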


\begin{remark}It is clear that greediness implies neither $w$-almost-greediness nor $w$-democracy when $w\in c_0$, as for instance $\ell_p$ is greedy but not $w$-democratic in this case.  Theorem \ref{partially-greedy-and-w-democratic} shows that $w$-almost-greediness implies partially-greediness (or almost-greediness, or greediness) if and only if we have
$$(e_n)_{n=1}^\infty\text{ is }w\text{-almost greedy}\;\;\;\Leftrightarrow\;\;\;(e_n)_{n=1}^\infty\approx(f_n)_{n=1}^\infty,$$
where $(f_n)_{n=1}^\infty$ denotes the canonical basis of $c_0$.\end{remark}

\begin{proof}[Proof of Theorem \ref{partially-greedy-and-w-democratic}]
The ``if'' part is trivial, so we need only prove the ``only if'' part.  We now assume $(e_n)_{n=1}^\infty$ is quasi-greedy, conservative, and $w$-democratic.  Then by Proposition \ref{conservative-to-democratic}, we can find a constant $C$ with
$$\|\sum_{n\in A}e_n\|\leq C\;\;\;\text{ for all }A\in\mathbb{N}^{<\infty}.$$
Meanwhile, by quasi-greediness together with Proposition \ref{constant-unconditional}, and making $C$ larger if necessary, we have
$$\|\sum_{n\in A}a_ne_n\|\leq 2C\|\sum_{n\in A}e_n\|\cdot\|(a_n)_{n\in A}\|_\infty$$
for any $A\in\mathbb{N}^{<\infty}$.  Also, if $\beta$ is the basis constant for $(e_n)_{n=1}^\infty$, then by looking at the difference of partial sum projections we obtain
$$\|(a_n)_{n\in A}\|_\infty\leq 2\beta\|\sum_{n\in A}a_ne_n\|.$$
Putting all these inequalities together, it follows that $(e_n)_{n=1}^\infty$ is equivalent to $c_0$.\end{proof}

\begin{theorem}\label{uniformly-complemented}Assume $w\in c_0$.  If $(e_n)_{n=1}^\infty$ is a normalized $w$-almost greedy basis for a real Banach space $X$, then for some $1\leq C<\infty$ and every $m\in\mathbb{N}$ there exists $N\in\mathbb{N}$ so that $(e_n)_{n=N+1}^{N+m}\approx_C(f_n)_{n=1}^m$, where $(f_n)_{n=1}^\infty$ is the canonical basis for $c_0$.  Furthermore, we can choose $C$ such that every subsequence of $(e_n)_{n=1}^\infty$ admits a further subsequence which is $C$-equivalent to the canonical basis for $c_0$.\end{theorem}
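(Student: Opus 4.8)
The plan is to extract from the $w$-almost greedy hypothesis the two structural features that force $c_0$-behaviour on index sets of small $w$-measure, and then to locate such index sets both as consecutive blocks far out in the basis and as thin subsequences. First I would record that, by Lemma~\ref{quasi-greedy} and Theorem~\ref{w-democratic}, the basis is quasi-greedy with constant $K+1$ (where $K$ is the $w$-almost greedy constant) and $w$-democratic with some constant $D\leq K$; let $\beta$ denote the basis constant. The crucial observation is that for any finite $B\subset\mathbb{N}$ with $w(B)\leq w_1=w(\{1\})$, $w$-democracy gives $\|\sum_{n\in B}e_n\|\leq D$ (comparing $B$ with the singleton $\{1\}$ and using normalization). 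Combined with Proposition~\ref{constant-unconditional} this yields, for all scalars $(a_n)$, the upper estimate $\|\sum_{n\in B}a_ne_n\|\leq 2(K+1)D\,\|(a_n)_{n\in B}\|_\infty$, while the coefficient functionals (of norm at most $2\beta$ for a normalized basis) give the matching lower estimate $\|(a_n)_{n\in B}\|_\infty\leq 2\beta\,\|\sum_{n\in B}a_ne_n\|$. Hence any such $B$ spans a block $C$-equivalent to the $c_0$-basis, with $C:=\max(2\beta,2(K+1)D)$ depending only on $(e_n)$.

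For the first assertion I would apply this to the consecutive block $B=\{N+1,\dots,N+m\}$. Since $w\in c_0$, for fixed $m$ we have $\sum_{i=N+1}^{N+m}w_i\leq m\sup_{i>N}w_i\to 0$ as $N\to\infty$, so we may pick $N$ with $w(B)<w_1$. The estimates above then give $(e_n)_{n=N+1}^{N+m}\approx_C(f_n)_{n=1}^m$, with $C$ independent of $m$.

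For the second assertion I would argue in the same way but select a thin subsequence. Given a subsequence $(e_{n_k})$, the weights $w_{n_k}$ still tend to $0$, so I can pass to a further subsequence $(e_{n_{k_j}})$ with $\sum_{j=1}^\infty w_{n_{k_j}}<w_1$. Then every finite subset $B=\{n_{k_j}:j\in A\}$ satisfies $w(B)\leq\sum_{j}w_{n_{k_j}}<w_1$, so the two estimates apply uniformly to all finitely supported scalars and show that the whole sub-subsequence $(e_{n_{k_j}})_{j=1}^\infty$ is $C$-equivalent to the $c_0$-basis, with the same $C$.

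The one point requiring care---and the main conceptual obstacle---is that a subsequence of a quasi-greedy basis need not itself be quasi-greedy, so I cannot invoke Proposition~\ref{constant-unconditional} for the sub-basis directly; instead I apply it to the ambient basis $(e_n)$ on the index sets $B$, which is legitimate because that proposition is valid for every finite $B\subset\mathbb{N}$. The remaining verifications (the coefficient-functional bound, the limit $\sum_{i=N+1}^{N+m}w_i\to0$, and the existence of a subsequence with $\sum_j w_{n_{k_j}}<w_1$) are routine, and the uniformity of $C$ over all $m$ and all subsequences is automatic since $\beta$, $K$, and $D$ are intrinsic to $(e_n)$.
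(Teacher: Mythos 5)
Your proof is correct and follows essentially the same route as the paper's: reduce to quasi-greediness plus $w$-democracy, bound $\|\sum_{n\in B}e_n\|\le D$ for sets $B$ of $w$-measure at most $w(\{1\})$ by comparing with a singleton, combine Proposition~\ref{constant-unconditional} with the basis-projection lower bound to get two-sided $c_0$ estimates, and pass to a thin sub-subsequence with summable weights for the ``furthermore'' claim. The only (minor, welcome) deviation is your direct estimate $\sum_{i=N+1}^{N+m}w_i\le m\sup_{i>N}w_i\to 0$, which handles an arbitrary $w\in c_0$ and so bypasses the nonincreasing hypothesis of Proposition~\ref{bounded} and the workaround of Remark~\ref{nonincreasing} that the paper's citation relies on.
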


\begin{proof}
Let $\beta$ be the basis constant for $(e_n)_{n=1}^\infty$.  We showed in the previous section that every $w$-almost greedy basis is both  quasi-greedy and $w$-democratic.  Fix $m\in\mathbb{N}$, and let $N\in\mathbb{N}$ be as in Proposition \ref{bounded}, which we can do by $w$-democracy.  We claim that there exists a constant $C$ such that
$$\frac{1}{2\beta}\sup_{n=N+1,\cdots,N+m}|a_n|\leq \|\sum_{n=N+1}^{N+m}a_ne_n\|\leq C\sup_{n=N+1,\cdots,m+N}|a_n|$$
for all $(a_n)_{n=1}^\infty\in c_0$.  In the above, the first inequality follows from taking the difference of basis projections, and the second inequality for a constant $C$ follows from combining Propositions \ref{constant-unconditional} and \ref{bounded}.   Without loss of generality we may assume $C\geq 2\beta$, completing the proof of the first part of the theorem.

Now let us prove the ``furthermore'' part.  Note that by Remark \ref{subsequence}, every subsequence of $(e_n)_{n=1}^\infty$ admits a further subsequence which is $C$-$w'$-democratic with $w'\in\ell_1$.  Thus, it is enough to show that $(e_n)_{n=M}^\infty\approx_C c_0$ for some $M\in\mathbb{N}$ whenever $w\in\ell_1$.

By Proposition \ref{bounded} we can find $M\in\mathbb{N}$ with
$$\|\sum_{n\in A}e_n\|\leq C$$
for all $A\in\mathbb{N}^{<\infty}$ with $M\leq\min A$.  Thus, as before we can find $C\geq 2\beta$ with
$$\frac{1}{2\beta}\|(a_n)_{n\in A}\|_\infty\leq\|\sum_{n\in A}a_ne_n\|\leq C\|(a_n)_{n\in A}\|_\infty.$$\end{proof}

\begin{corollary}\label{weakly-null}If $w\in c_0$ then every $w$-almost greedy basis of a real Banach space is weakly null.\end{corollary}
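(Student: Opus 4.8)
The plan is to invoke the \emph{`furthermore'} conclusion of Theorem~\ref{uniformly-complemented}, which guarantees that every subsequence of $(e_n)$ admits a further subsequence that is $C$-equivalent to the canonical basis of $c_0$, with $C$ independent of the chosen subsequence. Recalling that $(e_n)$ is weakly null exactly when $f(e_n)\to 0$ for every $f\in X^*$, I would argue by contradiction: suppose some $f\in X^*$ satisfies $f(e_n)\not\to 0$.

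First I would pass to a subsequence $(e_{n_k})_{k=1}^\infty$ and find $\delta>0$ with $|f(e_{n_k})|\ge\delta$ for all $k$. Applying the `furthermore' part of Theorem~\ref{uniformly-complemented} to this subsequence yields a further subsequence $(e_{m_j})_{j=1}^\infty$ which is $C$-equivalent to the canonical $c_0$ basis; in particular one has the uniform upper estimate $\|\sum_j a_j e_{m_j}\|\le C\|(a_j)\|_\infty$ for all finitely supported scalar families $(a_j)$.

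Next, for each $J\in\mathbb{N}$ I would choose signs $\varepsilon_j\in\{\pm 1\}$ so that $\varepsilon_j f(e_{m_j})=|f(e_{m_j})|\ge\delta$, and set $x_J:=\sum_{j=1}^J\varepsilon_j e_{m_j}$. The $c_0$-upper estimate gives $\|x_J\|\le C$, while
$$f(x_J)=\sum_{j=1}^J|f(e_{m_j})|\ge J\delta.$$
Hence $\|f\|\ge J\delta/C$ for every $J$, forcing $\|f\|=\infty$, which is absurd. Therefore $f(e_n)\to 0$ for every $f\in X^*$, so $(e_n)$ is weakly null.

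There is no serious obstacle here; the entire content is already packaged in Theorem~\ref{uniformly-complemented}, and the only quantitative input required is the uniform bound $\|\sum_j a_j e_{m_j}\|\le C\|(a_j)\|_\infty$ coming from $c_0$-equivalence. As an alternative to the explicit functional argument, one could instead invoke the subsequence principle for weak convergence together with the observation that any sequence equivalent to the canonical $c_0$ basis is itself weakly null (since that basis is weakly null and the equivalence is implemented by a bounded, hence weak-to-weak continuous, isomorphism).
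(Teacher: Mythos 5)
Your proof is correct and follows essentially the same route as the paper: argue by contradiction, extract a subsequence on which $|f|$ is bounded below by some $\delta>0$, and apply the ``furthermore'' clause of Theorem~\ref{uniformly-complemented}. The only difference is that you make explicit, via the sign-choice estimate $J\delta\le f(x_J)\le C\|f\|$, the fact the paper leaves implicit, namely that a sequence equivalent to the canonical $c_0$ basis is weakly null.
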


\begin{proof}Suppose $(e_n)_{n=1}^\infty$ is not weakly null.  Then there exists $f\in X^*$ such that $f(e_n)\not\to 0$.  Now find a subsequence such that $|f(x_{n_k})|\to\delta>0$.  Then $(x_{n_k})_{k=1}^\infty$ contains no weakly null subsequence.  In particular, it contains no subsequence equivalent to the $c_0$ basis.  By Theorem \ref{uniformly-complemented} it is not $w$-almost greedy.\end{proof}

\end{document}